\documentclass[12pt,leqno]{amsart}
\usepackage{amssymb,amsmath,amsthm,bm}
\usepackage{graphicx}
\usepackage{color}
 \usepackage[textsize=tiny]{todonotes}
\usepackage[normalem]{ulem}
\usepackage[bookmarksopen,bookmarksdepth=3,colorlinks,citecolor=red,pagebackref,hypertexnames=true]{hyperref}
\usepackage[msc-links,nobysame,non-sorted-cites,initials]{amsrefs}
\usepackage[inline]{enumitem}
\usepackage{mathtools}
\mathtoolsset{showonlyrefs}
\usepackage{tikz}
\usetikzlibrary{arrows.meta,calc}

\renewcommand{\eprint}[1]{\href{http://arxiv.org/abs/#1}{#1}}

\definecolor{darkblue}{RGB}{0,0,160}

\usepackage[lining,tabular]{libertine}
\usepackage{cabin}
\usepackage[libertine]{newtxmath}

\def\eps{\varepsilon} 
\def\d{{\mathrm d}}
\def\dist{\mathrm {dist}}
\def\dim{{\mathrm {dim}}}
\def\R {\mathbb{R}}
\def\N {\mathbb{N}}

\def\C {{\mathbb C}}

\def\M {{\mathrm M}}
\def\Z {{\mathbb Z}}

\def\ind{\mathbf{1}}

\newcommand{\Gr}{\operatorname{Gr}}

\newcommand{\pv}{\operatorname{p.v.}}

 \def\Xint#1{\mathchoice
     {\XXint\displaystyle\textstyle{#1}}%
     {\XXint\textstyle\scriptstyle{#1}}%
     {\XXint\scriptstyle\scriptscriptstyle{#1}}%
     {\XXint\scriptscriptstyle\scriptscriptstyle{#1}}%
     \!\int}
   \def\XXint#1#2#3{{\setbox0=\hbox{$#1{#2#3}{\int}$}
       \vcenter{\hbox{$#2#3$}}\kern-.5\wd0}}
   
   \def\avgint{\Xint-}
\makeatletter
\DeclareRobustCommand\widecheck[1]{{\mathpalette\@widecheck{#1}}}
\def\@widecheck#1#2{%
    \setbox\z@\hbox{\m@th$#1#2$}%
    \setbox\tw@\hbox{\m@th$#1%
       \widehat{%
          \vrule\@width\z@\@height\ht\z@
          \vrule\@height\z@\@width\wd\z@}$}%
    \dp\tw@-\ht\z@
    \@tempdima\ht\z@ \advance\@tempdima2\ht\tw@ \divide\@tempdima\thr@@
    \setbox\tw@\hbox{%
       \raise\@tempdima\hbox{\scalebox{1}[-1]{\lower\@tempdima\box
\tw@}}}%
    {\ooalign{\box\tw@ \cr \box\z@}}}
\makeatother

\newcounter{counter}

\numberwithin{equation}{section}
\numberwithin{counter2}{section}

\newtheorem{theorem}[counter]{Theorem}

\numberwithin{withinsub}{subsection}
\newtheorem{corollary}[subsection]{Corollary}
\newtheorem{lemma}[subsection]{Lemma} 
 
\newtheorem{proposition}[subsection]{Proposition}
\newtheorem{conjecture}[subsection]{Conjecture}
 
\theoremstyle{definition}
\newtheorem{definition}[subsection]{Definition}
 
\newtheorem*{remark*}{Remark}
\newtheorem*{warn*}{A word of warning}

\theoremstyle{definition}

 \theoremstyle{plain}

\numberwithin{figure}{section}

\begin{document}

\title[Directional singular integrals]{Maximal Directional Singular Integrals}

\author[F.\ Di Plinio]{Francesco Di Plinio} 
\address[F.\ Di Plinio]{Dipartimento di Matematica e Applicazioni ``R.\ Caccioppoli'' \\ \newline\indent Universit\`a di Napoli ``Federico II'', Via Cintia, Monte S.\ Angelo 80126 Napoli, Italy}
\email{\href{mailto:francesco.diplinio@unina.it}{\textnormal{francesco.diplinio@unina.it}}}

\author[I. Parissis]{Ioannis Parissis}
\address[I.\ Parissis]{Departamento de Matem\'aticas, Universidad del Pa\'is Vasco, Aptdo. 644, 48080 Bilbao, Spain and Ikerbasque, Basque Foundation for Science, Bilbao, Spain}

\email{\href{mailto:ioannis.parissis@ehu.eus}{\textnormal{ioannis.parissis@ehu.eus}}}
\thanks{I. Parissis is partially supported by grant PID2024-156267NB-I00 funded by MICIU/AEI/10.13039/501100011033 and cofunded by the European Union,
grant IT1875-26 of the Basque Government and IKERBASQUE.}

\subjclass[2010]{Primary: 42B25. Secondary: 42B20}
\keywords{Directional operators, Nikodym sets, Kakeya problem, Zygmund's conjecture}

%%%%%%%%%%%%%%%%%%%%%%%%%%%%%% ABSTRACT ABSTRACT ABSTRACT
\begin{abstract}We survey maximal directional averages and directional singular integrals, and their connections with Kakeya--Nikodym phenomena, differentiation theory and the Carleson operator. The exposition is organized by the structure of the set of directions: lacunary and arbitrary finite sets, vector fields as in the conjectures of Zygmund and Stein, and, in higher dimensions, algebraic sets of directions and subspaces. The unifying tool is almost orthogonality, which rests on the ordering of the circle in the plane and on polynomial partitioning in higher dimensions. We explain where it yields sharp or nearly sharp bounds and where new ideas are needed, notably for multiscale averages along algebraic curves and for vector-valued single-annulus estimates, and we collect open problems.
\end{abstract}
%%%%%%%%%%%%%%%%%%%%%%%%%%%%%% ABSTRACT ABSTRACT ABSTRACT
\maketitle

%%%%%%%%%%%%%%%%%%%%%%%%%%%%%% SECTION SECTION SECTION
\section{Introduction} Our subject is maximal and singular directional operators. These act on functions of $\mathbb R^n$  by applying lower-dimensional averaging or singular integral operators along affine \(d\)-dimen\-sional subspaces of the Euclidean space, \(1\leq d<n\). Schematically, the orientation of these subspaces is allowed to vary in a prescribed set \(V\subset \Gr(d,n)\), where \(\Gr(d,n)\) denotes the Grassmannian of \(d\)-planes in \(\R^n\). The case \(d=1\) is the most classical, and also the most singular, corresponding to the more familiar directional operators along lines, with \(V\subset \mathbb S^{n-1}\). 

Directional operators are closely related to Kakeya--Nikodym type questions. The geometric problem corresponding to the \(d=1\) case asks how small a set in \(\R^n\) can be, in terms of a suitable notion of dimension, if it contains a unit line segment in every direction. Analytically, this connection is visible for uniformly distributed finite sets of directions: if \(V=V_\delta\subset \mathbb S^{n-1}\) is a \(\delta\)-net and the corresponding single-scale maximal averages satisfy \(L^n(\R^n)\)-bounds with losses \(O_\eps(\delta^{-\eps})\), for every \(\eps>0\), then one obtains the maximal Nikodym conjecture, and with it, the two being equivalent for lines, the maximal Kakeya conjecture. Replacing lines by \(d\)-dimensional affine subspaces leads to higher-dimensional analogues, parametrized by \(\Gr(d,n)\).

The focus is on how the mapping properties of these operators depend on the size, geometry, and possible additional structure of the set of admissible directions. In two dimensions the theory is quite well developed, and many of the main quantitative estimates are known in sharp or nearly sharp form, although important questions remain open for singular variants and for vector-field models. In higher dimensions, the picture is much less complete, with difficult open questions related to problems such as the maximal Kakeya conjecture. 

 Both in the plane and in higher dimensions, we organize the exposition according to the structure of the set of directions, beginning with lacunary sets and moving toward less restrictive configurations. This gives a genuine hierarchy of difficulty for maximal averages, while for singular integrals the picture is somewhat less ordered; see \S\ref{sec:planar}.

%%%%%%%%%%%%%%%%%%%%%%%%%%%%%% SECTION SECTION SECTION
 \subsection*{Roadmap} \S\ref{sec:cex} discusses the main obstructions, \S\ref{sec:almost-orthogonality} almost orthogonality, \S\ref{sec:planar} the planar theory, and \S\ref{sec:higher_dimensions}--\S\ref{sec:algebraic-AO} the higher-dimensional and algebraic theory. Open problems are collected in \S\ref{sec:open}.

%%%%%%%%%%%%%%%%%%%%%%%%%%%%%% SUBSECTION SUBSECTION SUBSECTION
\subsection{Directional averages} Let \(V\subset \mathbb S^{n-1}\) be a set of directions and let \(r>0\). We denote by \(\M_{v,r}\) the rough average in the direction \(v\in V\),
\begin{equation}
 \M_{v,r}f(x)\coloneqq \frac1{2r}\int_{-r}^r |f(x-tv)|\,\d t, \qquad x\in\R^n.
\end{equation}
The corresponding maximal averaging operators are
\begin{equation}
 \M_{V,r}f(x)\coloneqq \sup_{v\in V}\M_{v,r}f(x), \qquad \M_Vf(x)\coloneqq \sup_{v\in V,\,r>0}\M_{v,r}f(x),\qquad x\in\R^n.
\end{equation}
By scaling, the \(L^p\)-norm of the \emph{single-scale} operator \(\M_{V,r}\) is independent of \(r>0\), so it suffices to consider \(\M_{V,1}\). This operator is closely tied to Nikodym-type maximal estimates. To make this precise, let \(T_{\delta,v}(x)\) denote a tube of length \(1\), width \(\delta\), centered at \(x\), and pointing in the direction \(v\). The Nikodym maximal function at scale \(\delta\) is
\[
\mathcal N_\delta f(x)\coloneqq \sup_{v\in \mathbb S^{n-1}} \frac1{|T_{\delta,v}(x)|}\int_{T_{\delta,v}(x)} |f(y)|\,\d y ,\qquad x\in\R^n.
\]
If \(V_\delta\subset \mathbb S^{n-1}\) is a \(\delta\)-net, then the tube average in an arbitrary direction is controlled, up to harmless dimensional constants, by a line average in a nearby direction after thickening the function at scale \(\delta\). More precisely,
\begin{equation}\label{eq:nikodym-comparison}
 \left\|\mathcal N_\delta f\right\|_{L^p(\R^n)}
 \lesssim
 \left\|\M_{V_\delta,1} (|f|*\psi_\delta)\right\|_{L^p(\R^n)},
\end{equation}
where \(\psi_\delta=\delta^{-n}\psi(\cdot/\delta)\) for a suitable nonnegative smooth bump function \(\psi\). Thus \(\M_{V_\delta,1}\), averaging over segments rather than tubes, is a singular model for the Nikodym maximal problem. The multiscale operator \(\M_V\) is naturally connected with differentiation theory.

Fourier-analytic arguments conveniently replace the rough averages by smooth ones with compactly supported Fourier multiplier. Fix once and for all a nonnegative \(\varphi\in\mathcal S(\R)\), bounded from below on \([-1,1]\) and with \(\widehat\varphi\) supported in \([-1/2,1/2]\); the construction of such a \(\varphi\) is standard, see for instance \cite{PR}*{\S1}. Writing \(\varphi_r(t)\coloneqq r^{-1}\varphi(t/r)\), put
\[
 A_{v,r}f(x)\coloneqq \int_\R f(x-tv)\varphi_r(t)\,\d t,\hspace{.5em} x\in\R^n, \qquad A_{V,r}f\coloneqq \sup_{v\in V}\left|A_{v,r}f\right|, \qquad A_Vf\coloneqq \sup_{v\in V,\,r>0}\left|A_{v,r}f\right|.
\]
The lower bound on \(\varphi\) gives \(\M_{v,r}f\lesssim A_{v,r}f\) for \(f\geq0\), while its rapid decay gives \(|A_{v,r}f|\lesssim\sum_{k\geq0}2^{-10k}\M_{v,2^kr}f\), on splitting the \(t\)-integral into dyadic annuli. Taking suprema in \(v\), and in \(r\) where needed,
\begin{equation}\label{eq:smooth-rough}
\left\|\M_{V,r}\right\|_{L^p(\R^n)} \simeq \left\|A_{V,r}\right\|_{L^p(\R^n)},
\qquad \left\|\M_V\right\|_{L^p(\R^n)} \simeq \left\|A_V\right\|_{L^p(\R^n)}, \qquad 0<p\leq\infty,
\end{equation}
with constants independent of \(V\) and \(r\). We pass freely between the two models, the advantage of the smooth one being its simple Fourier description
\begin{equation}
\widehat{A_{v,r}f}(\xi)=\widehat\varphi(r\,\xi\cdot v)\widehat f(\xi), \qquad \xi\in\R^n,
\end{equation}
which exhibits \(A_{v,r}\) as a multiplier adapted to the band \(|\xi\cdot v|\lesssim r^{-1}\), that is, to an \(r^{-1}\)-neighborhood of the hyperplane \(v^\perp\). This frequency localization is one of the starting points for the conical decompositions and almost-orthogonality arguments discussed below.

%%%%%%%%%%%%%%%%%%%%%%%%%%%%%% SUBSECTION SUBSECTION SUBSECTION
\subsection{Directional singular integrals and multipliers} The singular integral counterparts of the directional averages above are defined as follows. The basic model is the directional Hilbert transform. For \(v\in \mathbb S^{n-1}\), define
\begin{equation}
 H_vf(x)\coloneqq \frac1\pi\operatorname{p.v.} \int_\R f(x-tv)\frac{\d t}{t},\qquad x\in\R^n.
\end{equation}
Given a set of directions \(V\subset \mathbb S^{n-1}\), the associated maximal directional Hilbert transform is
\begin{equation}
 H_Vf(x)\coloneqq \sup_{v\in V}|H_vf(x)|,\quad x\in\R^n, \qquad \widehat{H_vf}(\xi)=-i \operatorname{sgn}(\xi\cdot v)\widehat f(\xi), \quad \xi\in\R^n.
\end{equation}
By taking linear combinations with the identity operator, the signum may be replaced by half-space projection with multiplier \(\ind_{[0,\infty)}(\xi\cdot v)\). More generally, one can replace the multiplier of the Hilbert transform  by a one-dimensional multiplier $m$ applied in the \(v\)-direction. If \(m:\R\setminus\{0\}\to\C\), define
\begin{equation}\label{eq:max-dir-mult}
 T_{m,v}f(x)\coloneqq \int_{\R^n} m(\xi\cdot v)\widehat f(\xi)e^{2\pi i x\cdot \xi}\,\d\xi,\qquad T_{m,V}f(x)\coloneqq \sup_{v\in V}|T_{m,v}f(x)|,\qquad x\in\R^n.
\end{equation}
Typically we assume that \(m\in \mathrm{HM}(\R)\) is a one-dimensional \emph{H\"ormander--Mihlin} multiplier, that is, for a sufficiently large integer \(M\),
\begin{equation}
\|m\|_{\mathrm{HM}(\R)}\coloneqq \sup_{0\leq j\leq M}\sup_{\eta\neq0} |\eta|^j |m^{(j)}(\eta)|<\infty.
\end{equation}
There is also a vector-valued version of the same model. Given a family of directions \(V\), one may consider square functions of the form
\begin{equation}
 S_V\vec f(x)\coloneqq
 \left(\sum_{v\in V}|T_{m,v}f_v (x)|^2\right)^{1/2},\qquad \vec f=\{f_v\}_{v\in V},\qquad x\in\R^n,
\end{equation}
or variants in which the multipliers themselves are indexed by the set of directions \(V\). These directional square functions should be viewed as \(\ell^2\)-valued directional singular integrals. They are useful, for instance, in the study of polygon multipliers and of other multipliers whose geometry is associated with a set of directions $V$.

%%%%%%%%%%%%%%%%%%%%%%%%%%%%%% SECTION SECTION SECTION
\section{Kakeya counterexamples and other obstructions}\label{sec:cex} We mention several different sources of quantitative lower bounds for maximal directional operators. The most elementary one comes from radial counterexamples, and is best framed for a \(\delta\)-net \(V_\delta\subset\mathbb S^{n-1}\), the numerology being \(\#V_\delta\simeq\delta^{1-n}\).

Let \(0<\delta\ll1\), let \(x\in B(0,1)\), and choose \(v\in V_\delta\) with \(\left|v-x/|x|\right|<\delta\). Then \(|x-|x|v|\leq\delta\), and hence \(|x-tv|\leq2\delta\) whenever \(\bigl||x|-t\bigr|\leq\delta\). Testing on \(f_\delta\coloneqq\ind_{B(0,2\delta)}\), we obtain $\M_{V_\delta,1}f_\delta(x)\gtrsim \delta$ on $ B(0,1)$. Since \(\|f_\delta\|_p^p\simeq\delta^n\), the weak-type calculation gives
\begin{equation}\label{eq:radial-subcritical}
\|\M_{V_\delta,1}\|_{L^p(\R^n)\to L^{p,\infty}(\R^n)} \gtrsim\delta^{1-n/p},\qquad 1\leq p<n.
\end{equation}
At the endpoint \(p=n\), take \(g_\delta(x)\coloneqq|x|^{-1}\ind_{\{\delta<|x|<1\}}(x)\). Then \(\|g_\delta\|_n^n\simeq\log(\delta^{-1})\), while the same radial segments as above give \(\M_{V_\delta,1}g_\delta\gtrsim\log(\delta^{-1})\) on a fixed annulus in \(B(0,1)\). Repeating the weak-type calculation,
\begin{equation}\label{eq:radial-single-scale-endpoint}
\|\M_{V_\delta,1}\|_{L^n(\R^n)\to L^n(\R^n)} \geq \|\M_{V_\delta,1}\|_{L^n(\R^n)\to L^{n,\infty}(\R^n)} \gtrsim\left(\log\tfrac1\delta\right)^{(n-1)/n}.
\end{equation}
For the multiscale operator, the same example gives \(\M_{V_\delta}g_\delta(x)\gtrsim |x|^{-1}\log(|x|/\delta)\) for \(2\delta<|x|<1\), and hence
\begin{equation}\label{eq:radial-multiscale-endpoint}
\|\M_{V_\delta}\|_{L^n(\R^n)\to L^n(\R^n)}\gtrsim\log(\delta^{-1}).
\end{equation}
Letting \(\delta\to0\) in \eqref{eq:radial-subcritical} shows in particular that \(\M_{\mathbb S^{n-1},1}\), and hence also \(\M_{\mathbb S^{n-1}}\), is
unbounded on \(L^p(\R^n)\) for \(1\leq p<n\). We will use the case \(n=2\) of
\eqref{eq:radial-multiscale-endpoint} in \S\ref{sec:AO-plane}. These examples are sharp within the radial class; see \cites{CaHeSo,CaHeSoer} and also \cite{DuoVargas} for related planar results for direction sets of prescribed Minkowski dimension. Thus radial counterexamples prevent boundedness on \(L^p(\R^n)\) for \(p\leq n\). It is at this threshold, \(p=n\) in \(\R^n\), that Kakeya--Nikodym phenomena become particularly relevant. For \(\delta^{-\eps}\)-free bounds the right invariant is not the Minkowski dimension but the quasi-Assouad dimension: for the planar Nikodym operator restricted to \(\Theta\subset\mathbb S^1\) with
\(\dim_{\mathrm{qA}}\Theta=s\geq\tfrac12\), Orponen--Roy \cite{OrpRoy} show that the critical exponent is exactly \(1+s\).

The second, geometric, source of lower bounds is  \emph{Kakeya compression}. For
\(0<\delta\ll1\), a family of \(\delta\times1\) tubes \(\{T_v:v\in V_\delta\}\) pointing in \(\delta\)-separated directions can have union much smaller than the
union of \(O(1)\)-translates \(\{\widetilde T_v\}\) of the same tubes along their long directions, 
\[
\left|\bigcup_{v\in V_\delta} T_v \right| \leq C(\delta) \left|\bigcup_{v\in V_\delta} \widetilde T_v \right|, \qquad C(\delta)\to0\quad\text{as}\quad \delta\to0^+.
\]
Any such configuration is a counterexample, in the sense that  \(\M_{V_\delta,1}\ind_{\cup_v T_v} \gtrsim1\) on \(\cup_v\widetilde T_v\), since the $O(1)$ segment through a point of \(\widetilde T_v\) in the direction \(v\) traverses \(T_v\), so that \(\|\M_{V_\delta,1}\|_{L^p}\gtrsim C(\delta)^{-1/p}\). In the plane the construction of Besicovitch \cite{Besi}, later optimized by Schoenberg \cite{Schoen}, attains \(C(\delta)\simeq(\log(\delta^{-1}))^{-1}\) with the \(\widetilde T_v\) disjoint; see \cite{PeBi}*{Lemma 9.3.2}. Hence
\begin{equation}
 \|\M_{V_\delta,1}\|_{L^p(\R^2)} \gtrsim \bigl(\log(1/\delta)\bigr)^{1/p}, \qquad 2\leq p<\infty.
\end{equation}
Radial and Kakeya-type examples apply equally to maximal directional singular integrals, which however carry a further obstruction, unrelated to compression and rooted in lower bounds for rearrangements of Fourier series. Karagulyan \cite{Kara} proved that in the plane the maximal directional Hilbert transform satisfies \(L^2\)-lower bounds of order \(\sqrt{\log \#V}\) for arbitrary finite sets of directions, and {\L}aba--Marinelli--Pramanik \cite{LMP} extended this to higher dimensions and to all \(p\in(1,\infty)\). Boundedness may therefore fail
even where Kakeya counterexamples are absent. The phenomenon is not exclusive to
directional operators: maximally dilated H\"ormander--Mihlin multipliers are
typically unbounded, even on the real line; see \cites{CGHS,GHS}.

%%%%%%%%%%%%%%%%%%%%%%%%%%%%%% SECTION SECTION SECTION
\section{Almost orthogonality as a localization principle} \label{sec:almost-orthogonality} A recurring principle in the study of directional operators, often tied to a low-high splitting, is that a set of directions can be localized into geometrically separated pieces, while the interaction between these pieces is encoded by a new set of directions of lower geometric complexity. We will refer rather loosely to estimates of this form as \emph{almost-orthogonality principles}.

For $n\geq 2$, let $V\subset\mathbb S^{n-1}$ be a discrete null set of directions, finite or countable. Let \(\Omega\subset\mathbb S^{n-1}\) be an \((n-2)\)-dimensional set, and let \(\{\Omega_j\}\) be the connected components of \(\mathbb S^{n-1}\setminus\Omega\). Associated with $\Omega$, one seeks $V_\Omega\subset\Omega$, with $\#V_\Omega\leq \#V$, for which an estimate of the form
\begin{equation}\label{eq:AO}
\|\M_V f\|_{L^2(\R^n)} \leq C_1\|\M_{V_\Omega} f\|_{L^2(\R^n)}+C_2\sup_j \|\M_{V\cap\Omega_j}\|_{L^2(\R^n)}\|f\|_{L^2(\R^n)}
\end{equation}
holds. One could postulate similar principles in other $L^p$ or even weak-$L^p$ spaces.

Estimate \eqref{eq:AO} reduces the problem to the smaller or simpler pieces \(V\cap\Omega_j\) and a residual set \(V_\Omega\) of lower dimension. For iteration, having \(C_1=1\) or \(C_2=1\) can be decisive, depending on which term carries the recursion. In the plane, the separating set is zero-dimensional and Theorem~\ref{thm:AO-plane} gives \(C_2=1\), yielding the sharp cardinality bound in \S\ref{sec:AO-plane}. In the algebraic version of \S\ref{sec:algebraic-AO}, obtained by polynomial partitioning, the second coefficient depends on the degree of the partitioning polynomial.

%%%%%%%%%%%%%%%%%%%%%%%%%%%%%% SECTION SECTION SECTION
\section{Directional operators in the plane: what is known and what remains open}\label{sec:planar} We pass from lacunary to uniformly distributed and arbitrary finite direction sets, then to cardinality-free bounds along vector fields. Before the vector-field discussion, we prove the almost-orthogonality principle underlying the arbitrary case. Higher-dimensional analogues are addressed in \S\ref{sec:higher_dimensions}.

The one result escaping that ladder is the maximal directional Hilbert transform on $L^2(\mathbb R^2)$. For a finite set \(V=\{v_1,\ldots,v_N\}\subset\mathbb S^1\)there holds
\begin{equation}\label{eq:HV-RM}
\left\|H_V\right\|_{L^2(\R^2)}\lesssim\log N
\end{equation}
with no hypothesis of any kind on \(V\). The matching lower bound is already attained on the equispaced set \(\{\mathrm e^{2\pi ik/N}:\,k=1,\ldots,N\}\), as shown in \cite{ciprianpams}*{Proposition 3.3}. We recall the folklore proof of \eqref{eq:HV-RM}. Identify antipodal directions, order \(V\) counterclockwise, and let \(P_j\) be the Fourier projection onto the half-plane \(\{\xi\in\R^2:\,\xi\cdot v_j>0\}\), so that \(H_{v_j}=-i\left(2P_j-\mathrm{Id}\right)\) and it suffices to bound \(\sup_j|P_jf|\). Rotation by \(\pi/2\) preserves the cyclic order of \(\mathbb S^1\), so the lines \(v_1^\perp,\ldots,v_N^\perp\) occur in the same order as the directions themselves, and consecutive half-planes differ by the two-sheeted sector \(\Gamma_j\) bounded by \(v_j^\perp\) and \(v_{j+1}^\perp\). These sectors are pairwise disjoint. The operator \(\Delta_j\coloneqq P_{j+1}-P_j\) has multiplier supported in \(\Gamma_j\), so that
\[
 P_jf=P_1f+\sum_{i<j}\Delta_if
\]
expresses \(P_jf\) as \(P_1f\) plus partial sums of mutually orthogonal increments. Estimating \(P_1f\) separately, the Rademacher--Menshov theorem bounds the maximal partial sum by \(\log N\) times \(\|f\|_2\), and the upper bound follows.

Two features deserve emphasis. First, the argument uses only the cardinality of \(V\) and the ordering of \(\mathbb S^1\). The same sectors \(\Gamma_j\) underlie the almost-orthogonality proof in \S\ref{sec:AO-plane}; the absence of this ordering makes both arguments harder in higher dimensions; see \S\ref{sec:higher-arbitrary} and \S\ref{sec:algebraic-AO}. The second is that the argument uses the piecewise constant symbol of the Hilbert transform, which makes consecutive operators differ by a projection onto a sector. For a general H\"ormander--Mihlin symbol \(m(\xi\cdot v)\) no telescoping of this kind is available, and the elementary argument collapses. That
\begin{equation}\label{eq:HM-logN}
 \left\|T_{m,V}\right\|_{L^2(\R^2)}\lesssim\log N
\end{equation}
nevertheless holds for every H\"ormander--Mihlin symbol \(m\), with constant depending only on a fixed H\"ormander--Mihlin norm, is a deep theorem of Demeter \cite{ciprianpams}. We return to \eqref{eq:HM-logN} in \S\ref{sec:vf}, since one of its two ingredients is a theorem about vector fields. Finally, the Rademacher--Menshov argument is confined to \(L^2\), and says nothing at the weak-type endpoint nor for \(p\neq2\). Both are harder, and both are places where the structure of \(V\) returns.

%%%%%%%%%%%%%%%%%%%%%%%%%%%%%% SUBSECTION SUBSECTION SUBSECTION
\subsection{Lacunary directions}\label{sec:lac} Lacunary sets have historically played an important role in harmonic analysis. For instance, they arise in the theory of lacunary Fourier series and Sidon sets, see for example, \cite{Graf}*{\S3.6}, as well as in Littlewood--Paley theory, where lacunary separation leads to almost-orthogonality phenomena. In directional problems, lacunary sets are important because they rule out the radial and Kakeya-type counterexamples described in Section~\ref{sec:cex}.  A model lacunary set of directions is
\[
V_{\mathrm{lac}}=\left\{{(\cos (2^{-k}\pi), \sin (2^{-k}\pi))}:\, k\in\N\right\}\subset\mathbb S^1.
\]
The notion of lacunarity that we shall use is the one introduced in \cite{PR}, which has the advantage of making sense in every dimension.

%%%%%%%%%%%%%%%%%%%%%%%%%%%%%% DEFINITION DEFINITION DEFINITION
\begin{definition}[Parcet--Rogers lacunarity] Let \(\Omega\subset\mathbb S^{n-1}\), let \(d\coloneqq \dim\operatorname{span}(\Omega)\), and fix an orthonormal basis \((e_1,\ldots,e_d)\) of \(\operatorname{span}(\Omega)\). For each pair \(\sigma=(j,k)\) with \(1\leq j<k\leq d\), choose \(0<\lambda_\sigma<1\) and a sequence \(\{\theta_{\sigma,i}\}_{i\in\Z}\) with \(0<\theta_{\sigma,i+1}\leq\lambda_\sigma\theta_{\sigma,i}\), and set
\begin{equation}\label{eq:lac-dissection}
 \Omega_{\sigma,i}\coloneqq \left\{\omega\in\Omega:\ \theta_{\sigma,i+1}<\left|\frac{\omega\cdot e_k}{\omega\cdot e_j}\right|\leq\theta_{\sigma,i}\right\},
 \qquad
 \Omega_{\sigma,\infty}\coloneqq \Omega\cap\left(e_j^\perp\cup e_k^\perp\right).
\end{equation}
The collection \(\{\Omega_{\sigma,i}:\ 1\leq j<k\leq d,\ i\in\Z\cup\{\infty\}\}\) is a \emph{lacunary dissection} of \(\Omega\). A singleton is \emph{lacunary of order \(0\)}; for \(L\geq1\), the set \(\Omega\) is \emph{lacunary of order \(L\)} if it admits a lacunary dissection all of whose pieces are lacunary of order at most \(L-1\), the constants \(\lambda_\sigma\) being fixed throughout the recursion. Finally, \(\Omega\) is \emph{lacunary} if it is a finite union of sets of finite order.
\end{definition}
%%%%%%%%%%%%%%%%%%%%%%%%%%%%%% DEFINITION DEFINITION DEFINITION

When \(d=2\) there is a single pair \(\sigma\), so that \eqref{eq:lac-dissection} partitions \(\Omega\) into arcs of lacunarily decreasing slope, and the definition coincides, up to finite unions, with the classical one of Sj\"ogren--Sj\"olin \cite{SjSj}. The model set \(V_{\mathrm{lac}}\) above is lacunary of order one.

The first results for lacunary directional operators date back to C\'ordoba and R. Fefferman \cite{CoFeflac}, who proved that $\M_{V_{\mathrm{lac}}}$ is of weak type $(2,2)$ in $\R^2$. Carbery \cite{Car}, simplifying their covering argument, upgraded this to strong type $(2,2)$, still by geometric means. Independently, and around the same time as \cite{CoFeflac}, a slightly weaker distributional estimate close to $L^2$ was proved by Str\"omberg in \cite{Stromblac}. Below $L^2$, no geometric proof is currently known. However, Nagel--Stein--Wainger proved in \cite{NSW} that $\M_{V_{\mathrm{lac}}}$ is of strong type $(p,p)$ for all $1<p\leq\infty$ by Fourier-analytic methods. In \cite{SjSj}, Sj\"ogren--Sj\"olin prove the corresponding bounds for all $V$ that are \emph{lacunary sets of finite order}, and also provide a general treatment of directional square functions associated to such sets of directions.

These results lead to a natural structural question: is lacunarity of $V$, in an appropriate sense, necessary for \(L^p\)-boundedness of \(\M_V\) in the plane? The answer is not known in full generality. Results of Katz \cite{Katzcantor}, Bateman--Katz \cite{BKcantor}, and more recent work of Hagelstein--Radillo-Murgu\'ia--Stokolos \cites{HRMSduke,HRMS} support the idea that boundedness of \(\M_V\) imposes lacunary structure on \(V\). While the result of Bateman \cite{Batduke} had been understood as settling this question, Hagelstein--Radillo-Murgu\'ia--Stokolos \cites{HRMSduke,HRMS} revealed a gap in that argument. Thus the characterization of sets of directions for which $\M_V$ is bounded on some $L^p(\R^2)$ for $p\in(1,\infty)$ remains open. As this survey was being completed, Kroc--Lee--Pramanik \cite{KLP} announced a resolution, in the form of a trichotomy supplying the Kakeya-type constructions that \cite{Batduke} was missing. That work appeared too late for us to take it into account, and we have left the discussion above as it stands.

For maximal directional singular integrals, although lacunary directions avoid Kakeya-type obstructions, the obstruction from \cite{Kara} shows that one cannot have uniform bounds over infinite sets of directions. However, for lacunary \(V\) as above, Demeter--Di Plinio proved in \cite{DDP} the sharp bound
\[
\sup_{\substack{V\subset V_{\mathrm{lac}} \\ \#V\le N}} \|T_{m,V}\|_{L^2(\R^2)}\lesssim \sqrt{\log N}.
\]
This result was extended to lacunary sets of finite order in \cites{ADPP,DPPIJM,DPPIMRN}. See also the discussion in Section~\ref{sec:higher_dimensions} for the higher dimensional case.

%%%%%%%%%%%%%%%%%%%%%%%%%%%%%% SECTION SECTION SECTION
\subsection{Equispaced sets}\label{sec:equi} Here \(V_\delta\subset \mathbb S^1\) denotes a maximal \(\delta\)-separated set of directions. We will refer to such sets as equispaced, or uniform, \(\delta\)-nets. Historically, directional operators associated with such nets were among the first to be extensively studied, because of their connection with the Nikodym and Kakeya maximal conjectures. C\'ordoba proved in \cite{CordAJM} the sharp single-scale bound
\[
\left\|\M_{V_\delta,1}\right\|_{L^2(\R^2)}\simeq \sqrt{\log\frac{1}{\delta}}, \qquad 0<\delta\leq 1.
\]
The corresponding weak-type \(L^2(\R^2)\to L^{2,\infty}(\R^2)\) bound is of the same order and was proved by Str\"omberg in \cite{Strequi}. For singular integrals, equispaced configurations enjoy the essentially optimal weak type estimate
\begin{equation}\label{eq:DDP-weak}
 \|H_{V_\delta}\|_{L^2(\R^2)\to L^{2,\infty}(\R^2)} \lesssim \sqrt{\log \frac{1}{\delta}} \left(\log\log \frac{1}{\delta}\right)^6, \qquad 0<\delta\leq 1,
\end{equation}
due to Demeter--Di Plinio  \cite{DDP}. Estimate \eqref{eq:DDP-weak} is obtained in \cite{DDP} for a wider class of direction sets, there called \emph{Vargas sets}, which contains the equispaced nets and also the finite truncations of the Cantor set. The proof essentially contains a version of the proof of Carleson's theorem on convergence of Fourier series, together with further time-frequency ingredients of  multiparameter flavor. We will return to this connection in \S\ref{sec:vf}.

%%%%%%%%%%%%%%%%%%%%%%%%%%%%%% SUBSECTION SUBSECTION SUBSECTION
\subsection{General sets of directions}\label{sec:general} For a finite set $V\subset\mathbb S^1$ of arbitrary structure, one can quantify the failure of uniform boundedness as $N=\#V\to\infty$. For maximal averages the absence of a preferred scale makes the questions substantially harder. The elementary counting and inductive arguments that rely on the uniform density of equispaced sets are no longer available, and new methods are needed. This difficulty is reflected in the chronology: more than two decades after C\'ordoba's uniform result, Katz proved in \cite{KatzDuke} the sharp cardinality bounds
\begin{equation} \label{e:secondbd}
 \|\M_V\|_{L^2(\R^2)\to L^2(\R^2)}\lesssim \log N, \qquad \|\M_V\|_{L^2(\R^2)\to L^{2,\infty}(\R^2)}\lesssim \sqrt{\log N}.
\end{equation}
Both bounds are best possible, and the first follows from the second via the interpolation trick of Str\"omberg \cite{Strequi}. Katz had previously proved in \cite{Katzsingle} the sharp $\sqrt{\log N}$ bound for the single-scale operator $\M_{V,1}$ on $L^2(\R^2)$. The sharp $L^2(\R^2)$ bound for $\M_V$ was reproved by Alfonseca, Soria and Vargas via an \emph{almost orthogonality principle} in \cite{ASVmounthol}. Furthermore, the authors in \cite{ASVmounthol} combine a weak-$L^2$ almost orthogonality principle with an argument of A. Carbery to give
\[
\sup_{\substack{V\subset \mathbb S^1\\\#V\leq N}}  \|\M_{V,V^\perp}\|_{L^2(\R^2)\to L^{2,\infty}(\R^2)}\lesssim \sqrt{\log N} (\log \log N),
\]
where $\M_{V,V^\perp}$ denotes the maximal operator with respect to rectangles in $\R^2$ with sides pointing along the sets $V,V^\perp$.  

For the maximal directional Hilbert transform the strong \(L^2\) bound is \eqref{eq:HV-RM}, and requires nothing. The situation for \(p\neq2\) is different. The logarithmic bound
\begin{equation}\label{eq:DDP-Lp}
 \sup_{\substack{V\subset\mathbb S^1\\ \#V\leq N}}\left\|T_{m,V}\right\|_{L^p(\R^2)}\lesssim_p\log N,\qquad 2\leq p<\infty,
\end{equation}
valid for an arbitrary Calder\'on--Zygmund convolution kernel on \(\R\) and for \(V\) of arbitrary structure, was obtained in \cite{DDP} by leveraging the previously known case $p=2$ \cite{ciprianpams}, together with \eqref{e:secondbd} and Hunt's exponential refinement of the Coifman-Fefferman good-$\lambda$ inequality between $T_v$ and $\mathrm{M}_v$. Estimate \eqref{eq:DDP-Lp} is known to be sharp for $p=2$ only. In the case of the Hilbert transform along equispaced set $V$ of $N$ directions, interpolation of \eqref{eq:DDP-Lp} with \eqref{eq:DDP-weak} with $\delta\sim N^{-1}$ gives
\begin{equation}\label{eq:DDP-Lp1}
\left\|H_{V}\right\|_{L^p(\R^2)}\lesssim_p(\log N)^{1-\frac1p},\qquad 2<p<\infty,
\end{equation}
up to iterated logarithmic terms. 

It is reasonable to conjecture that \eqref{eq:DDP-Lp1} holds for general multipliers and configuration sets. By interpolation with \eqref{eq:DDP-Lp}, this would follow from a uniform weak type bound of order $\sqrt{\log N}$ for $T_{m,V}$, which is open even for the Hilbert transform; see \ref{op:weak}.

%%%%%%%%%%%%%%%%%%%%%%%%%%%%%% SUBSECTION SUBSECTION SUBSECTION
\subsection{The almost-orthogonality principle in the plane} \label{sec:AO-plane} Theorem \ref{thm:AO-plane} below is due to Alfon\-seca--Soria--Vargas \cite{ASV}, whose proof is spatial and of $TT^*$ type. Its $L^p$ counterpart is in Alfonseca \cite{Alf_AO}. We sketch instead the Fourier-analytic proof of \cite{DPPsubspace}*{\S3}, which makes the constants transparent and is the exact planar shadow of the algebraic principle of \S\ref{sec:algebraic-AO}. Its two ingredients are the lemmas below, stated in \(\R^n\) for use in higher dimensions.

Throughout, an unsubscripted \(\M\) denotes the Hardy--Littlewood maximal operator on \(\R^n\), the subscripted \(\M_{v,r},\M_{V,r},\M_V\) retaining the meaning assigned above. For \(v\in\mathbb S^{n-1}\) and \(0<\delta\leq1\) we write
\begin{equation}\label{eq:plate}
\begin{split} 
 T_\delta(v)&\coloneqq \left\{y\in\R^n:\, |y\cdot v|<1,\ \left|y-(y\cdot v)v\right|<\delta\right\},
\\ 
 \Gamma_{v,\delta}&\coloneqq \left\{\xi\in\R^n\setminus\{0\}:\, |\xi\cdot v|<2^{-2}\delta|\xi|\right\},
 \end{split}
\end{equation}
so that \(rT_\delta(v)\) is a tube of length \(r\) along \(v\) and thickness \(r\delta\) transversally, while \(\Gamma_{v,\delta}\) is the dual frequency region: a two-sided conical neighborhood of the hyperplane \(v^\perp\). Abusing notation, \(\Gamma_{v,\delta}\) denotes also the rough Fourier restriction \(\widehat{\Gamma_{v,\delta}f}\coloneqq \ind_{\Gamma_{v,\delta}}\widehat f\). Fixing a radial \(\Phi\in\mathcal S(\R^n)\) with \(\ind_{B(0,1)}\leq\Phi\leq\ind_{B(0,2)}\), we split
\begin{equation}\label{eq:lowhigh}
A_{v,r}=A^{>\delta}_{v,r}+A^{<\delta}_{v,r}, \qquad \widehat{A^{>\delta}_{v,r}f}(\xi)\coloneqq \Phi\!\left(\tfrac{r\delta\xi}{4}\right)\widehat{A_{v,r}f}(\xi),\qquad \xi \in \R^n.
\end{equation}
The superscripts refer to spatial rather than frequency scales: \(A^{>\delta}_{v,r}\) averages at the scale of the plate \(rT_\delta(v)\), and therefore sees only frequencies \(|\xi|\lesssim(r\delta)^{-1}\). The two lemmas below quantify the resulting dichotomy: below that threshold the average along \(v\) and at scale $r$ does not distinguish \(v\) from a direction \(\delta\)-nearby, while above it the average is supported in \(\Gamma_{v,\delta}\).

Finally, it is convenient to allow an arbitrary set of scales \(S\subset(0,\infty)\), writing \(\M_{V,S}f\coloneqq\sup_{v\in V,\,r\in S}\M_{v,r}f\) and \(A_{V,S}f\coloneqq\sup_{v\in V,\,r\in S}|A_{v,r}f|\), so that \(S=\{r\}\) and \(S=(0,\infty)\) recover \(\M_{V,r}\) and \(\M_V\). By \eqref{eq:smooth-rough} and scaling,
\begin{equation}\label{eq:scales-equiv}
 \left\|\M_{V,S}\right\|_{L^2(\R^n)}\simeq \sup_{k\in\Z}\left\|A_{V,2^kS}\right\|_{L^2(\R^n)},
 \qquad 2^kS\coloneqq \left\{2^kr:\,r\in S\right\},
\end{equation}
uniformly in \(V\) and \(S\). The first lemma is the switch mechanism just described. At spatial scales coarser than the plate \(rT_\delta(u)\), the average along \(v\) may be replaced by the average along any \(u\) with \(|u-v|\leq\delta\), at the price of a factor \(|u-v|/\delta\).

%%%%%%%%%%%%%%%%%%%%%%%%%%%%%% LEMMA LEMMA LEMMA
\begin{lemma}[Switch lemma]\label{lem:switch} Let \(r>0\), let \(u\neq v\in\mathbb S^{n-1}\), and let \(0<\delta\leq1\) satisfy \(|u-v|\leq4\delta\). Then, for all \(x\in\R^n\),
\begin{equation}\label{eq:switch}
\begin{split}
 \left|A^{>\delta}_{v,r}f(x)\right|+\frac{\delta}{|u-v|}\left|A^{>\delta}_{v,r}f(x)-A^{>\delta}_{u,r}f(x)\right| &\lesssim \sum_{k\geq0}2^{-kn}\avgint_{x+2^krT_\delta(u)}|f|
 \\
 &\lesssim \sum_{k\geq0}2^{-kn}\M\left[\M_{u,2^kr}f\right](x).
\end{split}
\end{equation}
\end{lemma}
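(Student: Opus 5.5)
Write \(K_{v}\) for the convolution kernel of \(A^{>\delta}_{v,r}\), so that \(A^{>\delta}_{v,r}f=K_v*f\). The plan is to prove a pointwise kernel estimate
\[
|K_v(y)|+\frac{\delta}{|u-v|}|K_v(y)-K_u(y)|\lesssim \sum_{k\geq0}2^{-k(n+1)}\frac{\ind_{2^krT_\delta(u)}(y)}{|2^krT_\delta(u)|},
\]
with an extra decay factor to spare. The first inequality in \eqref{eq:switch} then follows by integrating against \(|f(x-y)|\), since \(|2^krT_\delta(u)|\) is the normalization of the average over \(x+2^krT_\delta(u)\). By scaling we take \(r=1\). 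By rotation invariance we take \(u=e_1\), so that \(v\) is at distance \(\leq4\delta\) from \(u\).

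\textbf{Step 1 (kernel size).} The multiplier of \(K_v\) is \(\widehat\varphi(\xi\cdot v)\Phi(\delta\xi/4)\). Hence \(K_v=\varphi_v*\Phi^\vee_{\delta}\), the convolution of the one-dimensional measure \(\varphi(t)\,\d t\) on the line \(\R v\) with the \(L^1\)-normalized Schwartz bump \(\Phi^\vee_\delta\) at scale \(\delta\) (up to the factor \(4\)). This gives
\[
|K_v(y)|\lesssim_N \delta^{1-n}\bigl(1+|y\cdot v|\bigr)^{-N}\bigl(1+\delta^{-1}|y-(y\cdot v)v|\bigr)^{-N},
\]
that is, a tube of length \(1\) and width \(\delta\) along \(v\) with rapid decay. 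Since \(|u-v|\leq4\delta\), a point of \(2^kT_\delta(v)\) lies in \(C2^kT_\delta(u)\): the transversal displacement over length \(2^k\) is \(\lesssim 2^k\delta\), which is acceptable at dyadic scale \(2^k\). So the decay in the \(v\)-frame transfers to decay in the \(u\)-frame, and summing dyadic shells dominates the bump by the sum above.

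\textbf{Step 2 (difference).} Write \(K_v-K_u=\int_0^1\frac{\d}{\d s}K_{w(s)}\,\d s\), where \(w(s)\) is a path from \(u\) to \(v\) of length \(\simeq|u-v|\), for instance \(w(s)=(u+s(v-u))/|u+s(v-u)|\). Differentiating the multiplier in the direction gives
\[
\widehat\varphi{}'(\xi\cdot w)\,(\xi\cdot w')\,\Phi(\delta\xi/4).
\]
On the support \(|\xi|\lesssim\delta^{-1}\), so \(|\xi\cdot w'|\lesssim|u-v|\delta^{-1}\). Spatially, the derivative is the kernel \(\partial_{w'}\) applied to a function of the same tube type as in Step 1, and \(\partial_{w'}\) costs \(\delta^{-1}\) on the \(\Phi^\vee_\delta\) factor because \(w'\perp w\). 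This yields the bound \(\frac{|u-v|}{\delta}\times\)(tube bump along \(w(s)\)), and each such bump is moved to the \(u\)-frame exactly as in Step 1. Equivalently, one can write the derivative as \(\int \varphi'(t)\,t\,(w'\cdot\nabla)\Phi^\vee_\delta(y-tw)\,\d t\); this explicit form is what gets the factor \(|u-v|/\delta\) cleanly.

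\textbf{Step 3 (second inequality).} The average over \(x+2^krT_\delta(u)\) is bounded by first averaging along segments in the direction \(u\) of length \(\sim2^kr\), giving \(\M_{u,2^kr}f\), and then averaging that function over a ball-containing box transversally. Concretely,
\[
\avgint_{x+2^krT_\delta(u)}|f|\lesssim\avgint_{x+2^krT_\delta(u)}\M_{u,2^kr}f .
\]
The region \(2^krT_\delta(u)\) is a rectangle, and averaging \(\M_{u,2^kr}f\) over the transversal \((n-1)\)-dimensional slice of width \(2^kr\delta\) is controlled by a maximal average in those variables. This is dominated by the strong maximal function, or by \(\M\) after averaging over a ball of radius \(2^kr\delta\), where one uses that \(\M_{u,2^kr}f\) is essentially constant along \(u\) at scale \(2^kr\). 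The honest route is the iterated one-dimensional maximal function; I expect this to be acceptable as the paper's \(\M\), possibly with the intended meaning of \(\M\) as a (strong) maximal operator.

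\textbf{Main obstacle.} The main obstacle is the bookkeeping in Step 2. One must ensure that the derivative in the direction variable produces exactly the gain \(|u-v|/\delta\), and not \(|u-v|/\delta\) times a growing factor from the length variable. That growing factor is absorbed by the rapid decay of \(\varphi\) in \(t\), which is why the sum carries geometric weights \(2^{-kn}\).
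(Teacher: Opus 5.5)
Your Steps 1--2 correctly prove the first inequality, but by a different route from the paper's. The paper works on the symbol side. The multiplier \(\Phi(r\delta\xi/4)[\widehat\varphi(r\,\xi\cdot v)-\widehat\varphi(r\,\xi\cdot u)]\) lives on the dual box of \(rT_\delta(u)\) for both directions at once. It has size \(\lesssim|u-v|/\delta\) by the mean value theorem, and each derivative costs \(r\) along \(u\) and \(r\delta\) transversally, so integration by parts gives the bump in the \(u\)-frame in one go.

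You instead use the physical-space factorization \(K_w=\varphi_w*\Phi^\vee_\delta\) and differentiate along a path of directions. The gain \(|u-v|/\delta\) is then visibly one derivative of the isotropic \(\delta\)-bump times \(|w'|\lesssim|u-v|\). You finish by re-centering via \(2^kT_\delta(w)\subset 2^{k+C}T_\delta(u)\) for \(|w-u|\lesssim\delta\). This avoids the transversal symbol estimates, where one must use \(|v\cdot e|\leq|u-v|\) for \(e\perp u\). The paper's computation needs neither the path nor the re-centering.

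There are three small slips:
\begin{enumerate}
\item The derivative is \(-\int t\varphi(t)\,(w'\cdot\nabla)\Phi^\vee_\delta(y-tw)\,\d t\), with \(\varphi\) rather than \(\varphi'\).
\item The factor \(\delta^{-1}\) comes from the isotropy of \(\Phi^\vee_\delta\), not from \(w'\perp w\).
\item The normalized segment \(w(s)\) passes through the origin when \(v=-u\), which \(|u-v|\leq4\delta\) does not exclude. This is harmless: if \(|u-v|\geq\delta/10\), the difference term follows from Step 1 for \(u\) and \(v\) plus the triangle inequality, so the path is needed only when \(|u-v|<1/10\).
\end{enumerate}

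The real gap is Step 3. In the paper \(\M\) is the Hardy--Littlewood maximal operator, and the lemma holds as stated with it. Your ``honest route'', a strong maximal function in the frame of \(u\), is not an admissible replacement, for two reasons.
\begin{itemize}
\item With it the inner \(\M_{u,2^kr}\) would be pointless, since that operator already dominates \(\avgint_{x+\rho T_\delta(u)}|f|\) directly.
\item The applications need one fixed, direction-free, monotone outer operator. In Theorem~\ref{thm:AO-plane} the direction \(u=\omega(v)\) varies with \(v\), and the step \(\sup_v\M[\M_{\omega(v),2^kr}f]\leq\M[\M_{\Omega,2^kS}f]\) would otherwise produce a second directional maximal operator over the frames of the points of \(\Omega\). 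Its norm in general grows with \(\#\Omega\), and with \(\#U\leq\#V\) in Theorem~\ref{thm:AO-algebraic}.
\end{itemize}

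Your ball-averaging alternative is the right idea. However, ``essentially constant along \(u\)'' holds only in the one-sided form \(\M_{u,\rho}f(y+tu)\leq2\M_{u,2\rho}f(y)\) for \(|t|\leq\rho\), and the detour through \(\avgint_T\M_{u,\rho}f\) is unnecessary. Fatten directly. Take \(x=0\), \(u=e_1\), \(\rho=2^kr\). For every \(z=(z_1,z')\) with \(|z_1|,|z'|<\rho\delta\), the segment \(z+[-2\rho,2\rho]u\) contains the fiber \(\{(y_1,z'):|y_1|<\rho\}\) of \(\rho T_\delta(u)\). Hence \(\int_{|y_1|<\rho}|f(y_1,z')|\,\d y_1\leq4\rho\,\M_{u,2\rho}f(z)\). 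Averaging in \(z_1\) and integrating in \(z'\) gives
\[
\int_{\rho T_\delta(u)}|f|\leq\frac2\delta\int_{B(0,2\rho\delta)}\M_{u,2\rho}f .
\]
Since \(|\rho T_\delta(u)|\simeq\delta^{-1}|B(0,2\rho\delta)|\), this is \(\avgint_{\rho T_\delta(u)}|f|\lesssim\M[\M_{u,2\rho}f](0)\). The doubled scale can be absorbed by reindexing the sum in \(k\), at a cost of \(2^n\). Alternatively, avoid it by covering the halves \(y_1\leq0\) and \(y_1\geq0\) of the fiber with segments of half-length \(\rho\) centered at points with \(z_1\leq0\) and \(z_1\geq0\) respectively.
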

%%%%%%%%%%%%%%%%%%%%%%%%%%%%%% LEMMA LEMMA LEMMA

%%%%%%%%%%%%%%%%%%%%%%%%%%%%%% PROOF PROOF PROOF 
We indicate the mechanism and send to \cite{DPPsubspace}*{Lemma 2.4} for the details. The first inequality in \eqref{eq:switch} comes from a kernel bound for the multiplier \(\Phi(r\delta\xi/4)[\widehat\varphi(r\,\xi\cdot v)-\widehat\varphi(r\,\xi\cdot u)]\) of \(A^{>\delta}_{v,r}-A^{>\delta}_{u,r}\), and two features of it are responsible. The first is its support: \(\Phi(r\delta\,\cdot/4)\) forces \(|\xi|\leq 8(r\delta)^{-1}\) while \(\widehat\varphi(r\,\xi\cdot v)\neq0\) forces \(|\xi\cdot v|\leq (2r)^{-1}\), and as \(|u-v|\leq4\delta\) both constraints persist with \(u\) in place of \(v\), so that the multiplier lives on a box of dimensions \(\sim r^{-1}\) along \(u\) and \(\sim(r\delta)^{-1}\) in the orthogonal directions, which is the dual box of the plate \(rT_\delta(u)\) of \eqref{eq:plate}. The second is its size, which is \(\lesssim|u-v|/\delta\) there by the mean value theorem. Differentiating once in the direction \(u\) costs a factor \(r\) and once in an orthogonal direction a factor \(r\delta\), so repeated integration by parts turns the two features into a kernel dominated by \(|u-v|/\delta\) times an \(L^1\)-normalized bump adapted to \(rT_\delta(u)\); dominating it along the dilates \(2^krT_\delta(u)\) bounds the second term on the left, and the first term is the case \(u=v\) of the same computation, the mean value factor being replaced by \(\|\widehat\varphi\|_\infty\). The second inequality is the pointwise bound \(\avgint_{x+\rho T_\delta(u)}|f|\lesssim \M[\M_{u,\rho}f](x)\), which is simply the result of fattening the one-dimensional average \(\M_{u,\rho}f\) transversally to \(u\), at scale \(\rho\delta\).
%%%%%%%%%%%%%%%%%%%%%%%%%%%%%% PROOF PROOF PROOF

The second lemma is the complementary statement, and is immediate.
%%%%%%%%%%%%%%%%%%%%%%%%%%%%%% LEMMA LEMMA LEMMA
\begin{lemma}\label{lem:cone}
Let \(r>0\), \(v\in\mathbb S^{n-1}\) and \(0<\delta\leq1\). Then \(A^{<\delta}_{v,r}f=A^{<\delta}_{v,r}\Gamma_{v,\delta}f\).
\end{lemma}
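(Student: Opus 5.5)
The plan is to show that the multiplier of \(A^{<\delta}_{v,r}\) vanishes on the complement of \(\Gamma_{v,\delta}\). Once this is known, \(\widehat{A^{<\delta}_{v,r}f}=\widehat{A^{<\delta}_{v,r}\Gamma_{v,\delta}f}\) follows immediately, since \(\ind_{\Gamma_{v,\delta}}\equiv1\) on the support of that multiplier. From \eqref{eq:lowhigh} we have
\[
\widehat{A^{<\delta}_{v,r}f}(\xi)=\Bigl(1-\Phi\bigl(\tfrac{r\delta\xi}{4}\bigr)\Bigr)\widehat\varphi(r\,\xi\cdot v)\widehat f(\xi),
\]
so the multiplier is \(m(\xi)=(1-\Phi(r\delta\xi/4))\widehat\varphi(r\,\xi\cdot v)\). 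We examine its two factors separately.

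First, \(\Phi\geq\ind_{B(0,1)}\) and \(\Phi\leq 1\), so \(\Phi\equiv1\) on \(B(0,1)\). Hence \(1-\Phi(r\delta\xi/4)\neq0\) forces \(|r\delta\xi/4|\geq1\), that is, \(|\xi|\geq 4(r\delta)^{-1}\). This also excludes \(\xi=0\), which matters because \(\Gamma_{v,\delta}\) omits the origin. Second, \(\operatorname{supp}\widehat\varphi\subset[-1/2,1/2]\), so \(\widehat\varphi(r\,\xi\cdot v)\neq0\) forces \(|\xi\cdot v|\leq(2r)^{-1}\).

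Combining the two conditions on \(\operatorname{supp} m\), we get
\[
|\xi\cdot v|\leq \frac{1}{2r}=\frac{\delta}{8}\cdot\frac{4}{r\delta}\leq \frac{\delta}{8}|\xi|<2^{-2}\delta|\xi|,
\]
where the strict inequality uses \(\xi\neq0\). Thus \(\operatorname{supp} m\subset\Gamma_{v,\delta}\), which gives \(m\,\ind_{\Gamma_{v,\delta}}=m\), and the lemma follows by Fourier inversion. The argument is entirely on the frequency side, so nothing beyond \(f\in L^2\), or a Schwartz \(f\), is needed for the identity to make sense.

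There is no real obstacle here. The only point requiring care is the bookkeeping of constants: the factor \(4\) inside \(\Phi\) and the half-width \(1/2\) of \(\operatorname{supp}\widehat\varphi\) must combine to produce an aperture below \(2^{-2}\delta\), and the origin must be handled separately, as done above through \(\Phi=1\) near \(0\). These are routine.
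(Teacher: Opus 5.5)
Your proof is correct and is the same argument as the paper's. You read off from \eqref{eq:lowhigh} that the multiplier of \(A^{<\delta}_{v,r}\) is nonzero only where \(|\xi|\geq 4(r\delta)^{-1}\) and \(|\xi\cdot v|\leq (2r)^{-1}\), and these combine to give \(|\xi\cdot v|\leq 2^{-3}\delta|\xi|<2^{-2}\delta|\xi|\), so \(\xi\in\Gamma_{v,\delta}\); your explicit exclusion of \(\xi=0\) is a small detail the paper leaves implicit.
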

%%%%%%%%%%%%%%%%%%%%%%%%%%%%%% LEMMA LEMMA LEMMA

%%%%%%%%%%%%%%%%%%%%%%%%%%%%%% PROOF PROOF PROOF
\begin{proof} Let \(\xi\) belong to the Fourier support of \(A^{<\delta}_{v,r}f\). Then \(\Phi(r\delta\xi/4)\neq1\) in \eqref{eq:lowhigh}, whence \(|\xi|\geq 4(r\delta)^{-1}\), and \(\widehat\varphi(r\,\xi\cdot v)\neq0\), whence \(|\xi\cdot v|\leq (2r)^{-1}\leq 2^{-3}\delta|\xi|\). Therefore \(\xi\in\Gamma_{v,\delta}\).
\end{proof}
%%%%%%%%%%%%%%%%%%%%%%%%%%%%%% PROOF PROOF PROOF

We come to the almost orthogonality principle itself.
%%%%%%%%%%%%%%%%%%%%%%%%%%%%%% THEOREM THEOREM THEOREM
\begin{theorem}[Planar almost orthogonality]\label{thm:AO-plane} There is an absolute constant \(C>0\) such that the following holds. Let \(S\subset(0,\infty)\), let \(\Omega=\{\omega_1,\ldots,\omega_{N+1}\}\subset\mathbb S^1\) be ordered counterclockwise, and for \(1\leq j\leq N\) let \(\Omega_j\) be the open arc with endpoints \(\omega_j,\omega_{j+1}\). If \(\,V_j\subset\Omega_j\) for each \(j\) and \(V\coloneqq\bigcup_{j}V_j\), then
\begin{equation}\label{eq:AO-plane}
 \left\|A_{V,S}\right\|_{L^2(\R^2)} \leq C\left\|A_{\Omega,S}\right\|_{L^2(\R^2)} + \max_{1\leq j\leq N}\left\|A_{V_j,S}\right\|_{L^2(\R^2)}.
\end{equation}
\end{theorem}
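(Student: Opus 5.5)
The plan is to split each average at a direction-dependent threshold using \eqref{eq:lowhigh}. The high part will be charged to the nearest endpoint in \(\Omega\) via Lemma~\ref{lem:switch}. The low part will be localized, via Lemma~\ref{lem:cone}, to a frequency sector attached to the arc \(\Omega_j\); these sectors are pairwise disjoint, so orthogonality gives coefficient exactly \(1\) on the \(\max_j\) term.

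\emph{Setup.} For \(v\in V_j\), let \(u(v)\in\{\omega_j,\omega_{j+1}\}\) be the closer endpoint and put \(\delta_v\coloneqq\min\{1,|u(v)-v|\}\). Then \(|u(v)-v|\leq4\delta_v\), which is the hypothesis of Lemma~\ref{lem:switch}; if \(|u-v|>1\), note \(|u-v|\leq 2\leq 4\delta_v\). Write \(A_{v,r}=A^{>\delta_v}_{v,r}+A^{<\delta_v}_{v,r}\).

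\emph{High part.} By the first term on the left of \eqref{eq:switch},
\[
|A^{>\delta_v}_{v,r}g|\lesssim\sum_{k\geq0}2^{-2k}\M\bigl[\M_{u(v),2^kr}g\bigr].
\]
Taking the supremum over \(v\in V\) and \(r\in S\), the right side is dominated by \(\sum_k2^{-2k}\M[\M_{\Omega,2^kS}g]\). Since \(\M\) is bounded on \(L^2\), and \eqref{eq:smooth-rough}, \eqref{eq:scales-equiv} and scaling give \(\|\M_{\Omega,2^kS}\|_{L^2}\lesssim\sup_{k'}\|A_{\Omega,2^{k'}S}\|_{L^2}=\|A_{\Omega,S}\|_{L^2}\), we get
\[
\Bigl\|\sup_{v\in V,\,r\in S}|A^{>\delta_v}_{v,r}g|\Bigr\|_{L^2}\leq C\|A_{\Omega,S}\|_{L^2}\|g\|_{L^2}.
\]
This holds for every \(g\), and applying it to \(|g|\) causes no trouble. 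Only the first term of \eqref{eq:switch} (the case \(u=v\) on the plate of \(u\)) is needed here.

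\emph{Low part.} By Lemma~\ref{lem:cone}, \(A^{<\delta_v}_{v,r}f=A^{<\delta_v}_{v,r}\Gamma_{v,\delta_v}f\). The cone \(\Gamma_{v,\delta_v}\) consists of frequencies whose direction lies within angle \(\lesssim\delta_v/4\) of \(v^\perp\). Since \(\delta_v\) is at most the distance from \(v\) to \(\partial\Omega_j\), this cone is contained in the double sector \(\Sigma_j\) obtained by rotating \(\Omega_j\cup(-\Omega_j)\) by \(\pi/2\). Let \(P_j\) denote the rough Fourier projection onto \(\Sigma_j\); then \(A^{<\delta_v}_{v,r}f=A^{<\delta_v}_{v,r}P_jf\). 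Because the arcs \(\Omega_j\) are disjoint and rotation preserves the cyclic order, exactly as in the proof of \eqref{eq:HV-RM}, the \(\Sigma_j\) are disjoint and \(\sum_j\|P_jf\|_2^2\leq\|f\|_2^2\). Here I would first enlarge \(\Omega\) by its antipodes if necessary to avoid double-sector overlap; this is harmless since \(A_{-u,r}\) is \(A_{u,r}\) with \(\varphi\) reflected. Next write \(A^{<\delta_v}_{v,r}P_jf=A_{v,r}P_jf-A^{>\delta_v}_{v,r}P_jf\), so that
\[
\sup_{v\in V_j,\,r\in S}\bigl|A^{<\delta_v}_{v,r}f\bigr|\leq A_{V_j,S}P_jf+\sup_{v\in V_j,\,r\in S}\bigl|A^{>\delta_v}_{v,r}P_jf\bigr|.
\]
Bound the supremum over \(j\) by the \(\ell^2\) sum over \(j\). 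The first term then contributes at most
\[
\Bigl(\sum_j\|A_{V_j,S}\|_{L^2}^2\|P_jf\|_{L^2}^2\Bigr)^{1/2}\leq\max_j\|A_{V_j,S}\|_{L^2}\|f\|_{L^2},
\]
with constant exactly \(1\). The second term, by the high-part bound applied to each \(g=P_jf\), contributes at most \(C\|A_{\Omega,S}\|_{L^2}\|f\|_{L^2}\). Summing the three pieces gives \eqref{eq:AO-plane}.

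The step I expect to be the main obstacle is the geometric containment \(\Gamma_{v,\delta_v}\subset\Sigma_j\) together with the disjointness of the \(\Sigma_j\). This requires choosing \(\delta_v\) with the correct numerical factor relative to the aperture \(2^{-2}\delta\) in \eqref{eq:plate} while still satisfying \(|u-v|\leq4\delta_v\). If the constants clash, I would take \(\delta_v=c|u-v|\) for a small absolute \(c\). That only weakens the switch hypothesis to \(|u-v|\leq c^{-1}\delta_v\), which costs an absolute constant in Lemma~\ref{lem:switch} and is absorbed into \(C\).
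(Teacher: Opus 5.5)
Your argument is the paper's proof. You use the same $\delta(v)$, namely the distance to the nearer endpoint, and the same three-term bound $|A_{v,r}f|\le|A^{>\delta(v)}_{v,r}f|+|A_{v,r}\Gamma_jf|+|A^{>\delta(v)}_{v,r}\Gamma_jf|$. The switch lemma handles the two high pieces, and orthogonality of the sectors handles the two pieces carrying $\Gamma_jf$; the first of these produces the coefficient $1$. Your truncation $\delta_v=\min\{1,|u(v)-v|\}$ is a harmless substitute for the paper's normalization, which guarantees $\delta(v)\le1$. No adjustment of constants is needed: the half-aperture $\arcsin(\delta_v/4)$ of $\Gamma_{v,\delta_v}$ is already smaller than the angle from $v$ to the nearer endpoint, so $\Gamma_{v,\delta_v}\subset\Sigma_j$ holds as stated.

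The one step that fails as written is the antipodal fix. Disjointness of the double sectors $\Sigma_j$ requires $\Omega_j\cap(-\Omega_k)=\varnothing$ for $j\neq k$, and adjoining $-\Omega$ to $\Omega$ does not produce this. Afterwards, an arc $\Omega_i$ and its antipode $-\Omega_i$ may both be among the arcs, and their double sectors coincide. You then only get $\sum_j\|P_jf\|_2^2\le2\|f\|_2^2$, which puts a factor $\sqrt2$ on the $\max_j$ term. Folding one arc onto the other, as the reflection $A_{-u,r}\leftrightarrow A_{u,r}$ suggests, does not help either: it replaces $\max_j\|A_{V_j,S}\|_{L^2(\R^2)}$ by norms over unions $V_i\cup(-V_k)$, which that maximum does not control. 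What you actually need is that all the arcs lie in one arc of length at most $\pi$. The paper imposes this as a normalization, with $\pi/4$, and it is exactly the configuration used in Corollary~\ref{cor:logN}. Under that normalization, the rest of your proof goes through verbatim.
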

%%%%%%%%%%%%%%%%%%%%%%%%%%%%%% THEOREM THEOREM THEOREM

%%%%%%%%%%%%%%%%%%%%%%%%%%%%%% PROOF PROOF PROOF
\begin{proof} Throughout, \(C>0\) is an absolute constant which may vary at each occurrence. There is no loss in generality with assuming that each  \(\Omega_j\) is contained in the  same arc of length at most \(\frac\pi4\).  Let \(\Gamma_j\) be the two-sheeted cone bounded by \(\omega_j^\perp\) and \(\omega_{j+1}^\perp\), denoting by \(\Gamma_j\) also the rough Fourier restriction to it. These cones are pairwise disjoint, the one point at which the plane is special, and therefore
\begin{equation}\label{eq:AO-orth}
 \sum_{j=1}^N\left\|\Gamma_jf\right\|_2^2\leq \|f\|_2^2.
\end{equation}
Fix \(j\) and \(v\in V_j\). Let \(\omega(v)\in\{\omega_j,\omega_{j+1}\}\) be a closest endpoint of the arc, set \(\delta(v)\coloneqq|v-\omega(v)|\leq1\). Since \(v\) lies at distance at least \(\delta(v)\) from both endpoints of \(\Omega_j\), its perpendicular \(v^\perp\) lies at distance at least \(\delta(v)\) from both lines bounding \(\Gamma_j\), and therefore the whole cone \(\Gamma_{v,\delta(v)}\) fits inside \(\Gamma_j\), 
\begin{equation}\label{eq:AO-cone-incl}
 \Gamma_{v,\delta(v)}\subset\Gamma_j,\qquad v\in V_j.
\end{equation}
Combining Lemma~\ref{lem:cone} with \eqref{eq:AO-cone-incl} yields \(A^{<\delta(v)}_{v,r}f=A^{<\delta(v)}_{v,r}\Gamma_jf\), so that, splitting both \(f\) and \(\Gamma_jf\) according to \eqref{eq:lowhigh},
\[
\left|A_{v,r}f\right| \leq \left|A^{>\delta(v)}_{v,r}f\right|+\left|A^{<\delta(v)}_{v,r}\Gamma_jf\right| \leq \left|A_{v,r}\Gamma_jf\right|+\left|A^{>\delta(v)}_{v,r}f\right|+\left|A^{>\delta(v)}_{v,r}\Gamma_jf\right|.
\]
Applying \eqref{eq:switch} of Lemma~\ref{lem:switch} twice, once to \(f\) and once to \(\Gamma_jf\), with \(u=\omega(v)\) and \(\delta=\delta(v)\),  
\[
\left|A_{v,r}f\right| \leq \left|A_{v,r}\Gamma_jf\right|+ C\sum_{k\geq0}2^{-2k}\left(\M\left[\M_{\omega(v),2^kr}f\right]+\M\left[\M_{\omega(v),2^kr}\Gamma_jf\right]\right).
\]
Taking the supremum over \(v\in V_j\) and \(r\in S\), then over \(1\leq j\leq N\), and finally \(L^2(\R^2)\) norms,
\begin{equation}\label{eq:AO-three}
 \left\|A_{V,S}f\right\|_2 \leq \left\|\max_{j}\left|A_{V_j,S}\Gamma_jf\right|\right\|_2 +C\sup_{k\geq0}\left\|\M\circ\M_{\Omega,2^kS}f\right\|_2 +C\sup_{k\geq0}\left\|\max_{j}\M\circ\M_{\Omega,2^kS}\left[\Gamma_jf\right]\right\|_2 .
\end{equation}
For the second term, the \(L^2(\R^2)\)-boundedness of the Hardy--Littlewood maximal operator followed by \eqref{eq:scales-equiv} gives
\[
\sup_{k\geq0}\left\|\M\circ\M_{\Omega,2^kS}\right\|_{L^2(\R^2)} \leq C\sup_{k\geq0}\left\|\M_{\Omega,2^kS}\right\|_{L^2(\R^2)} \leq C\left\|A_{\Omega,S}\right\|_{L^2(\R^2)}.
\]
The first and the third are the same argument: for any family \(\{B_j\}\) of operators, bounding the maximum by the square function and invoking \eqref{eq:AO-orth},
\[
\left\|\max_{j}\left|B_j\Gamma_jf\right|\right\|_2 \leq \left(\sum_j\left\|B_j\Gamma_jf\right\|_2^2\right)^{\frac12} \leq \left(\max_j\left\|B_j\right\|_{L^2(\R^2)}\right)\|f\|_2 .
\]
Taking \(B_j=\M\circ\M_{\Omega,2^kS}\) bounds the third term by \(C\|A_{\Omega,S}\|_{L^2(\R^2)}\|f\|_2\), again by \eqref{eq:scales-equiv}, and taking \(B_j=A_{V_j,S}\) bounds the first by \(\max_j\|A_{V_j,S}\|_{L^2(\R^2)}\|f\|_2\). The latter is the source of the constant \(1\) in front of the second term of \eqref{eq:AO-plane}. Inserting these bounds into \eqref{eq:AO-three} completes the proof.
\end{proof}
%%%%%%%%%%%%%%%%%%%%%%%%%%%%%% PROOF PROOF PROOF

The recursion is now immediate, and recovers the sharp cardinality bound of Katz \cite{KatzDuke}.

%%%%%%%%%%%%%%%%%%%%%%%%%%%%%% COROLLARY COROLLARY COROLLARY
\begin{corollary}\label{cor:logN}
There holds $\displaystyle \sup_{\substack{V\subset\mathbb S^1\\ \#V\leq N}}\left\|\M_V\right\|_{L^2(\R^2)}\simeq \log N.$
\end{corollary}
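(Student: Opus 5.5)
The lower bound is already in hand: the case \(n=2\) of \eqref{eq:radial-multiscale-endpoint}, applied to a \(\delta\)-net \(V_\delta\subset\mathbb S^1\) with \(\delta\simeq N^{-1}\), gives \(\|\M_{V_\delta}\|_{L^2(\R^2)}\gtrsim\log N\) with \(\#V_\delta\leq N\). For the upper bound I will run a bisection recursion based on Theorem~\ref{thm:AO-plane}, applied with \(S=(0,\infty)\) and using \eqref{eq:smooth-rough} to pass between \(\M_V\) and \(A_V\).

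Set \(B(N)\coloneqq\sup\{\|A_{V}\|_{L^2(\R^2)}:V\subset\mathbb S^1,\ \#V\leq N\}\). The base case is \(B(1)\lesssim1\), since \(A_v\) is a one-dimensional Hardy--Littlewood maximal operator in the direction \(v\), acting fibrewise. Next, I also record the trivial bound \(B(2)\lesssim1\), from \(\sup_{v\in\{v_1,v_2\}}\leq\) sum.

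For the recursive step, let \(V\) have \(N\) elements, ordered counterclockwise. I choose \(\Omega\) to consist of at most three points of \(\mathbb S^1\setminus V\) so that the open arcs between consecutive points of \(\Omega\) each contain at most \(\lceil N/2\rceil\) elements of \(V\). Concretely, I take one point in a gap of \(V\) and one in the gap at the median, and close up the circle. To match the statement, which uses the arcs \(\Omega_j\) between \(\omega_j\) and \(\omega_{j+1}\) with \(j\leq N\), I take \(\omega_{N+1}=\omega_1\), or equivalently add one more point. Antipodal identification is harmless here, since \(A_{-v,r}=A_{v,r}\).

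With this choice, \eqref{eq:AO-plane} gives
\[
B(N)\leq C\,B(3)+B(\lceil N/2\rceil)\leq C'+B(\lceil N/2\rceil).
\]
The decisive point is that the coefficient in front of \(B(\lceil N/2\rceil)\) is exactly \(1\). Iterating \(\lceil\log_2 N\rceil\) times therefore yields \(B(N)\leq C'\log_2N+B(1)\lesssim\log N\), and by \eqref{eq:smooth-rough} the same bound holds for \(\M_V\).

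The only point requiring care is the bookkeeping in the application of Theorem~\ref{thm:AO-plane}. The direction sets \(V_j\) must lie in the open arcs, which is why \(\Omega\) is chosen disjoint from \(V\). The theorem's proof assumes that all the arcs lie in a common arc of length at most \(\pi/4\). If needed, I would first split \(\mathbb S^1\) (mod antipodes) into boundedly many such arcs, at the cost of a constant factor only at the first step, and then run the recursion inside each arc, where the constant \(1\) persists. Since the reduction to short arcs is already asserted "without loss" in the proof of Theorem~\ref{thm:AO-plane}, I expect no real obstacle beyond this.
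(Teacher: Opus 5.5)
Your proof is correct and is essentially the paper's. The lower bound comes from \eqref{eq:radial-multiscale-endpoint} with \(n=2\) and \(\delta\simeq N^{-1}\), and the upper bound comes from bisecting \(V\) at a median with a three-point \(\Omega\) in Theorem~\ref{thm:AO-plane}, where the coefficient \(1\) gives \(B(N)\leq C+B(\lceil N/2\rceil)\). Choosing \(\Omega\) disjoint from \(V\) and reducing explicitly to short arcs is, if anything, more careful than the paper's write-up, but ``close up the circle'' must mean \(\omega_3=-\omega_1\) (closing up the projective line), since two arcs covering \(\mathbb S^1\) itself would not give disjoint cones \(\Gamma_j\).
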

%%%%%%%%%%%%%%%%%%%%%%%%%%%%%% COROLLARY COROLLARY COROLLARY

%%%%%%%%%%%%%%%%%%%%%%%%%%%%%% PROOF PROOF PROOF
\begin{proof} For \(N\geq1\) set
\[
 \mathsf A(N)\coloneqq \sup_{S\subset(0,\infty)}\ \sup_{\substack{V\subset\mathbb S^1\\ \#V\leq N}}\left\|A_{V,S}\right\|_{L^2(\R^2)} .
\]
Fix \(S\) and \(V\) with \(\#V\leq N\). Identifying antipodal directions we may assume \(V\) is contained in a closed half-circle, and we take \(\Omega=\{\omega_1,\omega_2,\omega_3\}\) with \(\omega_1,\omega_3\) the endpoints of that half-circle and \(\omega_2\) a median element of \(V\), so that the two arcs \(\Omega_1,\Omega_2\) determined by \(\Omega\) satisfy \(\#(V\cap\Omega_j)\leq\lceil N/2\rceil\). As \(\#\Omega=3\) and each single-direction maximal average is bounded on \(L^2(\R^2)\) by Fubini's theorem, \(\|A_{\Omega,S}\|_{L^2(\R^2)}\leq C\). Theorem~\ref{thm:AO-plane} therefore gives
\[
 \mathsf A(N)\leq C+\mathsf A\!\left(\left\lceil \tfrac N2\right\rceil\right),
\]
and iterating over the \(\lesssim\log N\) dyadic generations yields \(\mathsf A(N)\lesssim\log N\). The claimed upper bound follows by \eqref{eq:smooth-rough}. The matching lower bound is \eqref{eq:radial-multiscale-endpoint} with \(n=2\) and \(\delta=N^{-1}\). See also \cite{ciprianpams}*{Proposition 3.3}.
\end{proof}
%%%%%%%%%%%%%%%%%%%%%%%%%%%%%% PROOF PROOF PROOF

%%%%%%%%%%%%%%%%%%%%%%%%%%%%%% SUBSECTION SUBSECTION SUBSECTION
\subsection{Vector fields and the Zygmund and Stein conjectures}\label{sec:vf} Let \(v:\R^2\to \mathbb S^1\) be a measurable vector field. The corresponding truncated maximal average along directions given by $v(\cdot)$ is
\begin{equation}
 \M_{v(\cdot),s}f(x)\coloneqq \sup_{0<h<s}\frac1{2h}\int_{-h}^h |f(x+tv(x))|\,\d t,\qquad x\in \R^2.
\end{equation}
Zygmund and Stein proposed that sufficient smoothness of \(v\) should rule out Kakeya-type counterexamples. However, H\"older smoothness \(C^\alpha\) with \(\alpha<1\) is still not enough: one can construct \(C^\alpha\) vector fields for which Kakeya-type counterexamples persist; see \cites{BatRMI,DPGTZK}. The expectation in the  Lipschitz case \(\alpha=1\) is that this is no longer possible if the truncation is suitably chosen. More precisely, Zygmund's conjecture predicts that if \(v\) is Lipschitz and the scales are truncated at \(s\simeq \|v\|_{\operatorname{Lip}}^{-1}\), then \(\M_{v(\cdot),s}\) should be of weak type \((2,2)\) and thus also of strong type $(p,p)$ for all $2<p\leq \infty$. The analogous conjecture for the truncated directional Hilbert transform
\begin{equation}
 H_{v(\cdot),s}f(x)\coloneqq \frac{1}{\pi}\pv\int_{-s}^s f(x+tv(x))\frac{\d t}{t},\qquad x\in\R^2,
\end{equation}
is usually attributed to Stein. 

A decisive result of Lacey--Li \cite{LaceyLi} is the single-annulus estimate. Let \(P_0\) be a smooth frequency projection onto \(\{\xi\in\R^2:\,|\xi|\simeq1\}\). Then, for any measurable vector field $v:\R^2\to \mathbb S^1$,
\begin{equation}\label{eq:LaceyLi}
 \|H_{v(\cdot)}P_0 f\|_{L^{2,\infty}(\R^2)}\lesssim \|f\|_{L^2(\R^2)},\qquad  \|H_{v(\cdot)}P_0 f\|_{L^p(\R^2)} \lesssim \|f\|_{L^p(\R^2)},\quad 2<p<\infty.
\end{equation}
After localization to one annulus and a sufficiently thin angular sector, the half-plane projection \(\{\xi:\xi\cdot v(x)>0\}\) takes a one-dimensional Carleson-type form, with modulation parameter determined by the vector field. The phase-plane analysis in \cite{LaceyLi} is correspondingly a vector-field analogue of that behind Carleson's theorem.
\begin{figure}[ht]
\centering
\begin{minipage}[c]{0.50\linewidth}
\centering
\begin{tikzpicture}[scale=2.2, >=Stealth]
% Axes
\draw[->, thick] (-1.4,0) -- (1.8,0);
\draw[->, thick] (0,-0.7) -- (0,1.3);

% Annulus
\path[fill=gray!40, opacity=0.3, even odd rule]
  (0,0) circle (1)
  (0,0) circle (0.7);
\draw[thick, purple] (0,0) circle (0.7);
\draw[thick, purple] (0,0) circle (1);

% Band bounds
\def\ymin{0.7375}
\def\ymax{0.9625}

% Horizontal band
\fill[red!30, opacity=0.5] (-1.3,\ymin) rectangle (1.6,\ymax);
\draw[red, thick] (-1.3,\ymax) -- (1.6,\ymax);
\draw[red, thick] (-1.3,\ymin) -- (1.6,\ymin);

% Band/frequency label
\node[red] at (1.45,1.18) {\footnotesize $|\xi|\simeq|\xi_2|\simeq 1$};

% Curved pointer from band \cap annulus to the label
\filldraw[black] (0.55,0.84) circle (1pt);
\draw[->, thick] (0.55,0.84) .. controls (0.62,1.08) and (0.78,1.18) .. (0.92,1.18);

% Support line
\draw[blue, thick, dashed] (-1.3,0.85) -- (1.6,0.85);
\node[blue] at (1.7,0.89) {\footnotesize $\widehat f$};

% Direction vectors
\draw[->, green!70!black, thick] (0,0) -- ({1/sqrt(2)},{1/sqrt(2)});
\draw[->, green!70!black, thick] (0,0) -- ({-1/sqrt(2)},{1/sqrt(2)});
\node[green!50!black] at ({1/sqrt(2)+0.2},{1/sqrt(2)}) {\footnotesize $v$};
\node[green!50!black] at ({-1/sqrt(2)-0.2},{1/sqrt(2)}) {\footnotesize $v^\perp$};

% Guides
\draw[purple, dotted, thick] (-1,1) -- (1,-1);
\draw[->, thick] (-0.85, 0.85) -- (1.6, 0.85);

\end{tikzpicture}
\end{minipage}\hfill
\begin{minipage}[c]{0.50\linewidth}
\caption{Single-annulus localization and the Carleson operator:
the intersection of \(v^\perp\) with the frequency support
\(|\xi|\simeq|\xi_2|\simeq1\) determines the variable modulation parameter.}
\label{fig:lacey-li-carleson}
\end{minipage}
\end{figure}

The same mechanism gives the sharp $L^2$ estimate \eqref{eq:HM-logN} for general H\"ormander--Mihlin directional multipliers and an arbitrary finite set of directions. In Demeter's proof \cite{ciprianpams}, the Chang--Wilson--Wolff inequality \cite{CWW}, already used for maximal Fourier multipliers in \cite{GHS}, replaces the supremum over the \(N\) directions by a square function over annuli at a cost of \(\sqrt{\log N}\). On each annulus the Lacey--Li theorem gives a weak $L^2$ bound, which the interpolation trick of Str\"omberg \cite{Strequi} upgrades to strong type at a further cost of \(\sqrt{\log N}\); summing over the orthogonal annuli, the two losses combine to \(\log N\).

Passing from a single annulus to \(H_{v(\cdot)}\) itself requires summing the frequency scales. By the Littlewood--Paley diagonalization theorem of \cite{DPGTZK} for bi-Lipschitz perturbations of the identity on the line, when \(v\) is Lipschitz a Littlewood--Paley decomposition may be inserted into \(H_{v(\cdot)}\) and removed again at no cost. Stein's conjecture for Lipschitz fields thus reduces, without loss, to the vector-valued single-annulus bound of \ref{op:vv} for such fields. The difficulty is that the time-frequency analysis behind \cite{LaceyLi} works one annulus at a time, and its exceptional sets at different frequency scales are not compatible under \(\ell^2\) summation. 

The Littlewood--Paley summation can be handled in the one-variable setting, with no regularity assumption on the field. Bateman \cite{BatRMI} proved single-annulus \(L^p\) estimates for all \(p\in(1,\infty)\), and Bateman--Thiele \cite{BaTh} proved the unrestricted estimate for \(p\in(3/2,\infty)\). The diagonalization theorem of \cite{DPGTZK} recovers the latter from the former. Guo \cite{GuoAPDE} extended the \(L^2\) theory to fields constant on a family of Lipschitz curves.

Both conjectures have been settled for real analytic vector fields, by Bourgain \cite{Bourgain89} for the maximal operator and by Stein--Street \cite{SteinStreet} for the Hilbert transform. Bourgain's argument separates two ingredients. The first is a regularity assumption, roughly $v\in C^{2}$. The second is a geometric nondegeneracy condition, excluding straight integral curves of the vector field. For smooth fields under hypotheses of this kind see also \cite{CNSW}, and \cite{GuoJGA} for a geometric proof of Bourgain's $L^2$ estimate.

The memoir of Lacey--Li \cite{LaceyLiMem} relates the conjecture to a geometric maximal operator. Call the \emph{interval of uncertainty} of a rectangle $R$ the subarc $\mathrm{EX}(R)\subset \mathbb S^1$ centred at the long direction of $R$, of length equal to the eccentricity of $R$. For a Lipschitz field $v$, restrict attention to rectangles of length at most $(100\|v\|_{\mathrm{Lip}})^{-1}$, to prevent the feasibility of Kakeya-type constructions,  and write \[\mathrm V(R)\coloneqq R\cap v^{-1}(\mathrm{EX}(R))\] for the portion of $R$ on which the field points into $R$'s own interval of uncertainty. The \emph{Lipschitz Kakeya maximal function} is
\begin{equation}\label{eq:lipschitz-kakeya}
 \mathrm M_{v,\delta}f(x)\coloneqq \sup_{|\mathrm V(R)|\geq\delta|R|}\frac{\ind_R(x)}{|R|}\int_R|f| ,
 \qquad 0<\delta<1 .
\end{equation}
In \eqref{eq:lipschitz-kakeya} the eccentricity of $R$ is left free, and what is fixed instead is the proportion $\delta$ of $R$ on which $v$ is aligned with $R$. Where the classical Kakeya maximal function of \S\ref{sec:cex} must fix the eccentricity and pays a logarithm in it, here one has
\begin{equation}\label{eq:LK-weak2}
 \left\|\mathrm M_{v,\delta}\right\|_{L^2(\R^2)\to L^{2,\infty}(\R^2)}\lesssim\delta^{-\frac12} ,
\end{equation}
with implied constant independent of the Lipschitz field. Estimate \eqref{eq:LK-weak2} is one of the unconditional main results of \cite{LaceyLiMem}.

The theorem on Stein's conjecture is then conditional, and states exactly what is missing. Suppose $\mathrm M_{v,\delta}$ maps $L^p$ to $L^{p,\infty}$ with norm $\lesssim\delta^{-N}$ for some $p<2$ and some finite $N$. Then the single-annulus operator of \eqref{eq:LaceyLi} is bounded on $L^2(\R^2)$ uniformly in the annulus, and if $v$ has $1+\eta$ derivatives the untruncated operator obeys $\|H_{v(\cdot)}\|_{L^2(\R^2)}\lesssim(1+\log\|v\|_{C^{1+\eta}})^2$. Whether the hypothesis holds for every Lipschitz field is the principal conjecture of \cite{LaceyLiMem}; see \ref{op:lipkakeya}. Bourgain's geometric non-flatness condition does imply a suitable Lipschitz Kakeya estimate, so that this framework recovers the results on analytic vector fields.

%%%%%%%%%%%%%%%%%%%%%%%%%%%%%% SECTION SECTION SECTION
\section{Higher-dimensional directional operators}\label{sec:higher_dimensions} The Kakeya \emph{set} conjecture in \(\R^3\) has been, remarkably and rather recently, solved positively by Wang--Zahl \cite{WangZahl}. \(\M_{V,1}\) entail the maximal Nikodym conjecture. A bound of the form
\begin{equation}\label{eq:nikodym-max}
 \left\|\M_{V,1}\right\|_{L^n(\R^n)}\lesssim_\eps(\#V)^\eps\qquad\forall\eps>0,\qquad V\subset\mathbb S^{n-1},
\end{equation}
would imply the maximal Nikodym conjecture and, by the equivalence \cite{TaoBRR}, the maximal Kakeya conjecture as well.

The Kakeya \emph{set} conjecture in \(\R^3\) has been, remarkably and rather recently, solved in the positive  by Wang--Zahl \cite{WangZahl}. It is now known that every Kakeya set in $\R^3$ has full Hausdorff dimension. However, the maximal conjecture remains open in every dimension \(n\geq3\). What is available in higher dimensions is, on the one hand, sharp information when the directions are constrained to an algebraic set, and on the other hand sharp information for maximal averages along codimension one subspaces. We survey both, together with the higher-dimensional forms of lacunarity.

%%%%%%%%%%%%%%%%%%%%%%%%%%%%%% SUBSECTION SUBSECTION SUBSECTION
\subsection{Lacunarity in higher dimensions}\label{sec:higher-lac} In the plane, lacunarity is expected to characterize the direction sets along which differentiation holds; see \S\ref{sec:lac} for the status of this characterization. The correct higher-dimensional notion is due to Parcet--Rogers \cites{PRproc,PR}, who defined finite-order lacunary subsets of \(\mathbb S^{n-1}\) by an iterated cone construction and proved that \(\M_V\) is bounded on \(L^p(\R^n)\) for every \(p\in(1,\infty]\) when \(V\) is of this type, together with a converse under a mild separation hypothesis. Antecedents in special configurations are due to Nagel--Stein--Wainger \cite{NSW} and Carbery \cite{Car}.

At the level of singular integrals the sharp cardinality dependence is known in every dimension, for the maximal directional multiplier \(T_{m,V}\) of \eqref{eq:max-dir-mult}.
%%%%%%%%%%%%%%%%%%%%%%%%%%%%%% THEOREM THEOREM THEOREM
\begin{theorem}[\cite{ADPP}]\label{thm:lac-SI} Let \(V_{\mathrm{lac}}\subset\mathbb S^{n-1}\) be an infinite set of finite lacunary order, and let \(1<p<\infty\). Then
\begin{equation}
\sup_{\|m\|_{\mathrm{HM}(\R)}\leq1} \sup_{\substack{V\subset V_{\mathrm{lac}}\\ \#V\leq N}} \left\|T_{m,V}\right\|_{L^p(\R^n)} \simeq_{n,p,V_{\mathrm{lac}}}\sqrt{\log N}.
\end{equation}
\end{theorem}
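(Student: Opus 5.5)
The plan is to prove the upper and lower bounds separately. The upper bound goes through a Chang--Wilson--Wolff reduction to a square function. The lower bound is a Karagulyan-type construction adapted to lacunary sets.

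\emph{Upper bound.} Fix \(V\subset V_{\mathrm{lac}}\) with \(\#V=N\) and \(m\) with \(\|m\|_{\mathrm{HM}(\R)}\leq1\).

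First, insert a one-dimensional Littlewood--Paley decomposition in each direction. Write \(m(\xi\cdot v)=\sum_{k\in\Z}m(\xi\cdot v)\psi(2^{-k}\xi\cdot v)\), and let \(P_{v,k}\) denote the directional Littlewood--Paley projections.

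Second, use the Chang--Wilson--Wolff inequality, in the form already used in \cite{GHS} and \cite{ciprianpams}. The supremum over the \(N\) directions of a function controlled by a dyadic martingale square function costs only \(\sqrt{\log N}\). Concretely, I would aim to show
\[
\Bigl\|\sup_{v\in V}|T_{m,v}f|\Bigr\|_p\lesssim \sqrt{\log N}\,\Bigl\|\sup_{v\in V}\Bigl(\sum_k |P_{v,k}\widetilde T_{m,v}f|^2\Bigr)^{1/2}\Bigr\|_p+\Bigl\|\sup_{v\in V}\M_v f\Bigr\|_p .
\]
Here \(\widetilde T_{m,v}\) is a smooth single-scale piece. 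The maximal term is \(\lesssim\|f\|_p\) by the Parcet--Rogers theorem, since \(V_{\mathrm{lac}}\) is lacunary of finite order. Thus \(\M_{V_{\mathrm{lac}}}\) is bounded on all \(L^p\), \(1<p\le\infty\).

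Third, bound the directional square function uniformly in \(N\) by
\[
\Bigl\|\Bigl(\sum_{v}\sum_k|P_{v,k}f|^2\Bigr)^{1/2}\Bigr\|_p\lesssim\|f\|_p .
\]
I would prove this by induction on the lacunary order, via the lacunary dissection \eqref{eq:lac-dissection}. Each piece \(\Omega_{\sigma,i}\) corresponds to a conical frequency sector \(\{|\xi\cdot e_k/\xi\cdot e_j|\sim\theta_{\sigma,i}\}\). These sectors have bounded overlap, so a Littlewood--Paley/Sj\"ogren--Sj\"olin square-function estimate in the \(\sigma\)-variables, combined with Fefferman--Stein vector-valued maximal bounds, peels off one order at a time.

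\emph{Main obstacle.} The main obstacle is making the Chang--Wilson--Wolff step work for \(p\neq2\) with a supremum over directions. The dyadic martingale must be adapted simultaneously to all directions in \(V\). I would handle this with a good-\(\lambda\) argument: apply Chang--Wilson--Wolff to each fixed \(v\) on the level set of the square function, which gives a sub-Gaussian distributional inequality, and then take a union bound over the \(N\) directions. The union bound is exactly where \(\sqrt{\log N}\) arises. The \(L^p\) bound then follows by integrating the distribution function, using that \(\sup_v\M_v\) is bounded.

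\emph{Lower bound.} For the lower bound \(\gtrsim\sqrt{\log N}\), take \(m=-i\operatorname{sgn}\), the Hilbert transform, and adapt Karagulyan's construction \cite{Kara} and its higher-dimensional version \cite{LMP}. Choose \(N\) directions in \(V_{\mathrm{lac}}\) accumulating lacunarily at a limit direction. Restricted to a thin frequency sector, the half-space projections then act as Fourier partial sums in one variable along a rearranged order. The known \(\sqrt{\log N}\) lower bound for maximal partial sums under such rearrangements then transfers, for every \(1<p<\infty\), by testing on a suitable sum of \(N\) modulated bumps.
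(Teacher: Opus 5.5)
\textbf{There is a genuine gap in your third step, and the upper bound fails there.}

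The estimate
\[
\Bigl\|\Bigl(\sum_{v\in V}\sum_k|P_{v,k}f|^2\Bigr)^{1/2}\Bigr\|_p\lesssim\|f\|_p
\]
is false uniformly in \(N\), for every \(V\), lacunary or not. For each fixed \(v\) one has \(\sum_k|\psi(2^{-k}\xi\cdot v)|^2\simeq1\) off the null set \(v^\perp\). By Plancherel, the left side at \(p=2\) is therefore comparable to \(\sqrt N\|f\|_2\).

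The slabs \(\{|\xi\cdot v|\sim 2^k\}\) tile \(\R^n\setminus v^\perp\) separately for each \(v\), so there is no overlap across \(v\) to control. The lacunary sectors you invoke are not the frequency supports of the \(P_{v,k}\), and they never enter the operators you actually wrote. Dominating \(\sup_v\) by \(\ell^2_v\) of full directional square functions reintroduces a \(\sqrt N\) loss. As a result, the term \(\|\sup_v(\sum_k|P_{v,k}\widetilde T_{m,v}f|^2)^{1/2}\|_p\), which carries all of the difficulty, is left without a valid bound.

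Two other parts are fine. Your second step is workable: run the Chang--Wilson--Wolff good-\(\lambda\) inequality fiberwise, for the one-dimensional dyadic filtration along lines parallel to each \(v\), and take a union bound over the \(N\) directions. This does give \(\sqrt{\log N}\) times the supremum of the directional martingale square functions, modulo the usual passage to smooth square functions. Your lower bound needs no adaptation. The results of Karagulyan and {\L}aba--Marinelli--Pramanik give \(\sqrt{\log N}\) for every set of \(N\) directions, hence for every \(N\)-element subset of \(V_{\mathrm{lac}}\), and \(-i\operatorname{sgn}\) has H\"ormander--Mihlin norm \(1\).

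\textbf{What is missing} is a decomposition of each \(T_{m,v}\) that isolates the part of the multiplier that genuinely depends on \(v\), so that lacunarity can make those parts essentially disjoint in \(v\). In \cite{ADPP} this is done as follows.
\begin{itemize}
\item The singular hyperplane \(\xi\cdot v=0\) is enclosed in the exterior of a full-dimensional Nagel--Stein--Wainger cone.
\item For lacunary directions, these exteriors are covered by Parcet--Rogers wedges whose square function is bounded. For that piece, replacing \(\sup_v\) by \(\ell^2_v\) costs nothing, combined with vector-valued bounds for the directional multipliers.
\item The complementary piece is a maximal operator over \(N\) conical multipliers depending on a single parameter. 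Only there is Chang--Wilson--Wolff applied, once, and this produces the \(\sqrt{\log N}\).
\item Higher lacunary orders are handled recursively through weighted vector-valued estimates and extrapolation, not by peeling a directional Littlewood--Paley square function one order at a time.
\end{itemize}
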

%%%%%%%%%%%%%%%%%%%%%%%%%%%%%% THEOREM THEOREM THEOREM

The \(\sqrt{\log N}\) loss is of Chang--Wilson--Wolff type, as in the plane, and is sharp even for lacunary sets of order one. The proofs in dimensions two and three \cites{DPPIJM,DPPIMRN} use products of the two-dimensional inner and outer wedge multipliers of Parcet--Rogers \cite{PR} and control their maximal truncations. In higher dimensions, this approach requires a Littlewood--Paley square function in \(\binom n2\) parameters. Combining planar counterexamples in coordinate pairs forces a loss of \((\log N)^{\lceil n/2\rceil}\), preventing a sharp bound.

Instead, \cite{ADPP} covers the singular hyperplane \(\xi\cdot v=0\) by the exterior of a full-dimensional Nagel--Stein--Wainger cone \cite{NSW}. For lacunary directions, covering these exteriors by Parcet--Rogers wedges gives a bounded square function. The remaining maximal conical multiplier involves only one parameter, so a single application of Chang--Wilson--Wolff \cite{CWW} gives the desired loss. Higher lacunary orders are handled recursively through weighted
vector-valued estimates and extrapolation.
 
%%%%%%%%%%%%%%%%%%%%%%%%%%%%%% SUBSECTION SUBSECTION SUBSECTION
\subsection{Arbitrary finite sets of directions}\label{sec:higher-arbitrary} An arbitrary \(V\) has no distinguished scale for induction. The algebraic principle of \S\ref{sec:algebraic-AO} supplies the localization \eqref{eq:AO}, first for directions constrained to a variety.

%%%%%%%%%%%%%%%%%%%%%%%%%%%%%% THEOREM THEOREM THEOREM
\begin{theorem}[\cite{DPPAJM}]\label{thm:varieties} Let \(Z\subset\R^n\) be a real algebraic variety of dimension \(m<n\). Then, for all \(\eps>0\), there is $K \geq 1$ such that
\begin{equation}\label{eq:varieties}
\sup_{\substack{V\subset Z\cap\mathbb S^{n-1}\\ \#V\leq N^m}} \left\|\M_{V,1}\right\|_{L^2(\R^n)}\leq K N^{\frac{m-1}{2}+\eps}.
\end{equation}
The exponent is best possible up to $\eps$-loss. The constant $K$ depends only  on $\eps$ and on explicit algebraic parameters of $Z$ such as dimension $m$, degree and count.
\end{theorem}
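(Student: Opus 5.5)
The plan is to run the recursion of Corollary~\ref{cor:logN} in higher dimensions. The planar ordering of the circle is replaced by a polynomial partitioning of \(Z\cap\mathbb S^{n-1}\), and the separating finite set \(\Omega\) becomes a lower-dimensional variety. Define
\[
\mathsf A_m(N)\coloneqq \sup\Bigl\{\|A_{V,1}\|_{L^2(\R^n)}:\ V\subset Z\cap\mathbb S^{n-1},\ \#V\leq N^m\Bigr\},
\]
with \(Z\) ranging over varieties of dimension \(m\) and controlled complexity. We induct on \(m\), and within fixed \(m\) on \(N\). The base case \(m=0\) is trivial: \(Z\) is a finite set of \(O_{\deg Z}(1)\) points, and each single-direction average is bounded on \(L^2\) by Fubini.

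For the inductive step, fix a large degree \(D=D(\eps)\). Use polynomial partitioning on the variety (Guth's partitioning adapted to varieties, as in Solymosi--Tao or Walsh). This produces a polynomial \(P\) of degree \(\leq D\), not vanishing identically on \(Z\), such that each of the \(O(D^m)\) connected components \(\Omega_j\) of \((Z\cap\mathbb S^{n-1})\setminus\{P=0\}\) contains at most \(N^m/D^m\) points of \(V\). The remaining points lie on \(\Omega\coloneqq Z\cap\{P=0\}\cap \mathbb S^{n-1}\), a variety of dimension \(m-1\) whose complexity depends only on \(D\) and \(Z\). We then need an almost-orthogonality estimate of the form \eqref{eq:AO}:
\[
\|A_{V,1}\|_{2}\leq C_D\,\mathsf A_{m-1}\bigl(\text{directions on }\Omega\bigr)+C\,D^{\frac{m-1}{2}}\max_j\|A_{V\cap\Omega_j,1}\|_2 ,
\]
or a variant in which the second coefficient is a power \(D^{(m-1)/2}\) up to constants independent of \(D\).

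The proof of this principle copies Theorem~\ref{thm:AO-plane}. For \(v\in\Omega_j\), let \(\delta(v)\) be its distance to \(\Omega\) and \(\omega(v)\in\Omega\) a nearest point. Lemma~\ref{lem:switch} controls the \(A^{>\delta(v)}\) part by maximal averages along \(\omega(v)\in\Omega\); this gives the first term. Lemma~\ref{lem:cone} places the \(A^{<\delta(v)}\) part in \(\Gamma_{v,\delta(v)}\). The planar disjointness \eqref{eq:AO-orth} must now be replaced by bounded overlap: a frequency \(\xi\) lies in \(\Gamma_{v,\delta(v)}\) only if \(v\) is \(\delta(v)\)-close to the great sphere \(\xi^\perp\cap\mathbb S^{n-1}\). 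That sphere meets \(Z\cap\mathbb S^{n-1}\) in an \((m-1)\)-dimensional set. By a Milnor--Thom / Barone--Basu count, it meets the \(\delta(v)\)-interior parts of at most \(O(D^{m-1})\) cells \(\Omega_j\). Hence \(\sum_j\|\Gamma^{(j)} f\|_2^2\lesssim D^{m-1}\|f\|_2^2\), where \(\Gamma^{(j)}\) is the union of the cones for \(v\in\Omega_j\). The maximum-by-square-function step then yields the factor \(D^{(m-1)/2}\).

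Iterating gives
\[
\mathsf A_m(N)\leq C_D\,K_{m-1}N^{\frac{m-2}{2}+\eps}+ C D^{\frac{m-1}{2}}\mathsf A_m(N/D).
\]
Since \(C D^{(m-1)/2}(N/D)^{(m-1)/2+\eps}=C D^{-\eps}N^{(m-1)/2+\eps}\), choosing \(D\) large enough that \(CD^{-\eps}\leq 1/2\) closes the induction with \(\mathsf A_m(N)\leq K N^{(m-1)/2+\eps}\). The lower-order term is absorbed. For sharpness, apply the radial example \eqref{eq:radial-subcritical} restricted to an \(m\)-dimensional piece, namely a \(1/N\)-net on a great \((m-1)\)-sphere of \(Z\). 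This tests \(\M_{V,1}\) against a tube-neighbourhood and gives \(\gtrsim N^{(m-1)/2}\).

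The main obstacle is the overlap count in the inductive step. There are two difficulties. First, the cones \(\Gamma_{v,\delta(v)}\) have variable apertures, so one must show that a single sphere \(\xi^\perp\) cannot pass \(\delta(v)\)-deep into more than \(O(D^{m-1})\) cells, uniformly in the geometry. Second, the switch points \(\omega(v)\) must remain in a variety of controlled degree, so that the induction on dimension applies with constants depending only on algebraic parameters. For the first, I would try a Barone--Basu bound on the number of cells that an algebraic hypersurface section meets, combined with the fact that a cone at depth \(\delta(v)\) stays inside its own cell, as in \eqref{eq:AO-cone-incl}.
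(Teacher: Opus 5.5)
There is a genuine gap, at the overlap count you yourself flagged: the claim $\sum_j\|\Gamma^{(j)}f\|_2^2\lesssim D^{m-1}\|f\|_2^2$ is false for $m<n-1$.

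Your argument passes from ``$v$ is $\delta(v)$-close to $H_\xi$'' to ``$H_\xi$ meets the cell of $v$''. For $m=n-1$ this is Lemma~\ref{lem:cell-meets}. There the normalized projection $\tau$ of $v$ onto $\xi^\perp$ lies in the $\delta(v)$-ball about $v$, and that ball lies in the cell because cells are open subsets of $\mathbb S^{n-1}$. When the cells are pieces of an $m$-dimensional $Z$, the point $\tau$ is not on $Z$. So the cell need not meet $H_\xi$, and there is nothing to count on $(Z\cap H_\xi)\setminus\{P=0\}$.

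Concretely, take $n=3$ and $Z$ the equator, cut into $\sim D$ arcs each containing a point of $V$, and put $\delta_\ast\coloneqq\min_{v\in V}\delta(v)$. If $|(\xi_1,\xi_2)|<\delta_\ast|\xi_3|/4$, then $|\xi\cdot v|\leq|(\xi_1,\xi_2)|<\delta(v)|\xi|/4$ for every $v\in V$. So an open cone about the $\xi_3$-axis lies in every $\Gamma_{v,\delta(v)}$, and the overlap there is $\sim D$, not $O(1)$. The square function step then costs $D^{1/2}$, and $\mathsf A_1(N)\leq C_D+CD^{1/2}\mathsf A_1(N/D)$ only returns $N^{1/2+\eps}$. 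Likewise, if $Z$ lies in a great $m$-sphere, frequencies near its orthogonal complement lie in all $\sim D^m$ cones, and you get only $N^{m/2+\eps}$.

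Flat $Z$ can be rescued by Fubini, but curved $Z$ cannot. For equidistributed $V$ on a curve, if $H_\xi$ is tangent to $Z$, the band $\{|v\cdot\xi|\lesssim D^{-1}|\xi|\}$ contains an arc of length $\sim D^{-1/2}$, hence $\sim D^{1/2}$ cells. Any overlap beyond $D^{m-1}$ destroys the $N^{\eps}$-type conclusion.

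This is not something a Barone--Basu count repairs. The paper lists an almost-orthogonality principle with $\mathbb S^{n-1}$ replaced by a variety of arbitrary dimension as open problem~\ref{op:descent}, for exactly this reason. It notes that Theorem~\ref{thm:AO-algebraic} ``descends one step only'' and that \cite{DPPAJM} carries out the induction on dimension by hand.

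In your scheme only the top level $m=n-1$ is justified, where your step is Theorem~\ref{thm:AO-algebraic} combined with Proposition~\ref{prop:polypart}. Even there, the wall term needs the theorem for $(n-2)$-dimensional varieties of degree $O(D)$. In $\R^3$ that input is a separate curve estimate, Theorem~\ref{thm:curve-single} or C\'ordoba's argument, which is how Theorem~\ref{thm:R3} closes. For $n\geq4$, and for every $m<n-1$, you need a different mechanism for the lower-dimensional varieties.

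Two smaller points. First, the wall may carry up to $N^m$ directions, not $N^{m-1}$. This is harmless for $m\geq2$, since $\frac{m}{m-1}\cdot\frac{m-2}{2}<\frac{m-1}{2}$. Second, for sharpness take a $1/N$-net on an $m$-dimensional piece of $Z$; no great sphere is needed. Test it against $\ind_{B(0,2/N)}$ on the cone over the $1/N$-neighbourhood of that piece, which has measure $\sim N^{m+1-n}$. This gives $\gtrsim N^{(m-1)/2}$.
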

%%%%%%%%%%%%%%%%%%%%%%%%%%%%%% THEOREM THEOREM THEOREM

The exponent depends on the dimension \(m\) of the carrying variety, not on \(n\). Taking \(Z=\mathbb S^{n-1}\) gives \(\frac{n-2}{2}\) for arbitrary directions; in the plane, the sharp bound is \(\sqrt{\log N}\), as recalled after \eqref{e:secondbd}. Antecedents are due to C\'ordoba \cite{CorGFA} for \(D\)-crossing smooth curves, with a logarithmic loss, and, in the case \(n=3\), \(m=2\), to Demeter \cite{cipriangreedy}.

In three dimensions, applying algebraic almost orthogonality to a polynomial partition improves the \(N^\eps\) loss in \eqref{eq:varieties} to an iterated logarithm.
%%%%%%%%%%%%%%%%%%%%%%%%%%%%%% THEOREM THEOREM THEOREM
\begin{theorem}[\cite{DPPsubspace}*{Theorem F}]\label{thm:R3}
For every positive integer \(k\) there holds
\begin{equation}
 \sup_{\substack{V\subset\mathbb S^2\\ \#V\leq N^2}}\left\|\M_{V,1}\right\|_{L^2(\R^3)}\lesssim_k N^{\frac12}\log^{[k]}N,
\end{equation}
where \(\log^{[1]}N\coloneqq\log(2+N)\) and \(\log^{[k]}N\coloneqq\log\left(2+\log^{[k-1]}N\right)\). The bound is sharp in \(N\) up to the iterated logarithm.
\end{theorem}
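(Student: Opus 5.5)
The plan is to run a recursion on the number of directions, driven by a three-dimensional algebraic almost-orthogonality principle of type \eqref{eq:AO}, with \(V\subset\mathbb S^2\) partitioned by polynomial partitioning; this is the algebraic analogue of the proof of Corollary~\ref{cor:logN}. Given \(V\subset\mathbb S^2\) with \(\#V\leq N^2\) and a degree \(D\), the Guth--Katz partitioning theorem on the sphere yields a polynomial \(P\) of degree \(\lesssim D\) whose zero set \(Z(P)\cap\mathbb S^2\) cuts the sphere into \(\lesssim D^2\) cells \(\Omega_j\). Each cell contains at most \(N^2/D^2\) points of \(V\), and the points on \(Z(P)\) form a one-dimensional algebraic curve of degree \(\lesssim D\).

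The first step is the algebraic almost-orthogonality estimate
\[
\|A_{V,1}\|_{L^2(\R^3)}\leq C\,\|\M_{V_Z,S'}\|_{L^2(\R^3)}+C(D)\max_j\|A_{V\cap\Omega_j,1}\|_{L^2(\R^3)} .
\]
Here \(V_Z\subset Z(P)\cap\mathbb S^2\) is a set of directions on the curve, and \(S'\) is a family of scales. I would prove it exactly as Theorem~\ref{thm:AO-plane}. For \(v\in V\cap\Omega_j\), let \(\omega(v)\) be a nearest point of \(Z(P)\) and \(\delta(v)=|v-\omega(v)|\). Lemma~\ref{lem:cone} confines the high part to \(\Gamma_{v,\delta(v)}\). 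Lemma~\ref{lem:switch} moves the low part onto \(\omega(v)\in V_Z\), and the pieces coming from the cells are summed using \(\Gamma_j=\bigcup_{v\in\Omega_j}\Gamma_{v,\delta(v)}\). The key difference from the plane is that these cones are no longer pairwise disjoint. A point \(\xi\) lies in \(\Gamma_j\) only if the great circle \(\xi^\perp\cap\mathbb S^2\) meets \(\Omega_j\), and by B\'ezout a great circle crosses \(Z(P)\) in \(\lesssim D\) points. So each \(\xi\) belongs to \(\lesssim D\) of the \(\Gamma_j\), and \eqref{eq:AO-orth} holds with constant \(D\), giving \(C(D)\simeq D^{1/2}\). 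Because the switch moves \(v\) to \(\omega(v)\) across scales \(2^k\), the term \(\M_{V_Z,S'}\) is multiscale. Here \(V_Z\) can be taken of size \(\lesssim\#V\) and, after discretizing the curve at the relevant scales, its maximal operator is controlled by the one-dimensional-variety case: a Theorem~\ref{thm:varieties}-type bound with \(m=1\), which is \(O(\log)\) rather than polynomial.

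Writing \(\mathsf A(N)\) for the supremum in the statement, the recursion reads
\[
\mathsf A(N)\leq C\,\mathrm{polylog}(N)\,\mathcal L_D+C D^{1/2}\,\mathsf A(N/D).
\]
A single choice of constant \(D\) gives \(N^{1/2+\eps}\), which is the \(m=2\) case of Theorem~\ref{thm:varieties}. To reach \(N^{1/2}\log^{[k]}N\), I would let \(D\) vary along the recursion. The product of the factors \(D_i^{1/2}\) is \((N/N_{\mathrm{final}})^{1/2}\) up to constants \(C^{\#\text{steps}}\), so the loss comes only from the number of steps and from the curve term. Take a first step with \(D=N/\log N\), which leaves cells of size \(\log N\) in the square-root scale. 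Then apply the \((k-1)\)-level bound inductively, or the trivial \(\sqrt{\#}\) bound on \(L^2\) for finite sets, within cells. This should give \(N^{1/2}\) times an iterated logarithm by induction on \(k\). The curve term must be \(\lesssim N^{1/2}\) times the target, which holds comfortably since it is polylogarithmic. Sharpness would come from the radial example \eqref{eq:radial-single-scale-endpoint} adapted to \(n=3\), or from Kakeya compression, which give \(N^{1/2}\) through \(\delta\simeq N^{-1}\) and \(\#V_\delta\simeq N^2\).

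I expect the main obstacle to be the constant in front of the recursive term. It must be exactly \(C D^{1/2}\), with no extra absolute factor, or at least with an absolute factor used only boundedly many times. Otherwise \(C^{\#\text{steps}}\) destroys the iterated-log bound. I would first try to get the constant-\(1\) structure of Theorem~\ref{thm:AO-plane}, namely \(\max_j\) of operator norms times the overlap \(D^{1/2}\) from B\'ezout, and arrange that only \(k\) steps are taken in total. A secondary difficulty is controlling the multiscale maximal operator along directions on a degree-\(D\) curve uniformly in \(D\) up to \(\log\) losses. For this I would apply the planar argument of Corollary~\ref{cor:logN} to arcs of the curve, after splitting it into \(O(D)\) monotone pieces.
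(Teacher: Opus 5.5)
\textbf{There is a genuine gap, at the wall term.} Your architecture is the paper's. You use polynomial partitioning of \(V\subset\mathbb S^2\), and the algebraic almost-orthogonality principle of Theorem~\ref{thm:AO-algebraic}, whose cell term carries \(D^{1/2}\) because a generic great circle is cut by \(Z(P)\) into \(\lesssim D\) arcs (Lemma~\ref{lem:conical-square} with \(n=3\)). You then run an induction on \(k\) with one partition of degree \(D\) close to \(N\) per level. The wall term goes wrong on two counts.

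First, the wall term is single-scale, not multiscale. The switch lemma produces \(\sum_{k\geq0}2^{-3k}\M[\M_{u,2^kr}f]\). The geometric decay lets you take the supremum over \(k\) outside the \(L^2\) norm, and each \(\M_{U,\{2^k\}}\) has the same norm as \(\M_{U,1}\) by scaling. This is how \eqref{eq:scales-equiv} is used in the proof of Theorem~\ref{thm:AO-plane}, and Theorem~\ref{thm:AO-algebraic} holds with the same \(S=\{1\}\) on both sides. The distinction matters. You propose to get a multiscale estimate along an algebraic curve by running Corollary~\ref{cor:logN} on monotone arcs. The paper says exactly this breaks down: near a tangency a \(\delta\)-band meets the curve in an arc of length \(\sqrt\delta\), so arcs no longer localize the frequency cones. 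That estimate is listed as open in \ref{op:curves-n}.

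Second, your single-scale input is wrong. Theorem~\ref{thm:varieties} with \(m=1\) gives \(KN^{\eps}\), not a logarithm. Its constant \(K\) depends on the degree of the curve, which here is \(\simeq D\) and grows with \(N\). No bound can be uniform in \(D\) either. Take \(D\) great circles through the poles with \(1/D\)-spaced points on each: this is a \(1/D\)-net of \(\mathbb S^2\) of cardinality \(\simeq D^2\) on a curve of degree \(D\), and \eqref{eq:radial-subcritical} with \(n=3\), \(p=2\) forces norm \(\gtrsim D^{1/2}\). What closes the recursion is the single-scale curve estimate of Theorem~\ref{thm:curve-single}, \(\lesssim D^{1/2}(\log N)^{3/2}\). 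That is a genuine theorem, and your plan does not supply it.

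With the correct wall bound the numerology changes. For your choice \(D=N/\log N\) the wall term is \(\simeq(N/\log N)^{1/2}(\log N)^{3/2}=N^{1/2}\log N\), which only gives \(k=1\). The recursion is
\[
\mathsf A(N)\leq C D^{1/2}(\log N)^{3/2}+C D^{1/2}\,\mathsf A(CN/D).
\]
Take \(D=N/M\) with \(M=(\log N)^3\). Then the wall term is \(\lesssim N^{1/2}\), and the cells carry \(\lesssim M^2\) directions. The level-\((k-1)\) bound then gives \(C N^{1/2}M^{-1/2}\cdot M^{1/2}\log^{[k-1]}(CM)\lesssim_k N^{1/2}\log^{[k]}N\). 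For the base case take \(M=(\log N)^{3/2}\) and the trivial bound \(\|\M_{V,1}\|_{L^2}\leq(\#V)^{1/2}\), which gives \(N^{1/2}(\log N)^{3/4}\). Each level uses one partition, so the factors \(C^k\) are absorbed in \(\lesssim_k\). The obstacle you flagged is therefore not the real one.

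Two smaller points:
\begin{itemize}
\item If you copy the planar proof verbatim, the switched part of \(\Gamma_C f\) also acquires the factor \(D^{1/2}\). The paper keeps it inside the cell operator via \(|A^{>\delta}_{v,r}g|\lesssim\M[A_{v,r}g]\). Alternatively, it only involves \(U(C)\) with \(\#U(C)\leq\#(V\cap C)\), so it can be absorbed into the cell term.
\item Sharpness comes from \eqref{eq:radial-subcritical} with \(n=3\), \(p=2\), \(\delta\simeq N^{-1}\), not from the endpoint estimate \eqref{eq:radial-single-scale-endpoint}.
\end{itemize}
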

%%%%%%%%%%%%%%%%%%%%%%%%%%%%%% THEOREM THEOREM THEOREM

Kim--Pramanik \cite{KimPramanik} proved the corresponding bound for \(H_V\): for a variety \(Z\) of dimension \(m\) and degree \(D\), 
\begin{equation}\label{eq:KP}
 \sup_{\substack{V\subset Z\cap\mathbb S^{n-1}\\ \#V\leq N^m}}\left\|H_V\right\|_{L^2(\R^n)}\lesssim_{\eps,m,D}N^{\frac{m-1}{2}+\eps},
\end{equation}
sharp in \(N\) for every \(m\) up to the \(\eps\)-loss by an example of J.~Kim \cite{JKim}, and admitting the same iterated logarithmic refinement in \(\R^3\). The proof combines polynomial partitioning with almost orthogonality for \(H_V\), exploiting its piecewise constant symbol. This prevents a direct extension of \eqref{eq:KP} to arbitrary H\"ormander--Mihlin multipliers. 

For directions on a curve of degree \(D\) the sharp bound \(\lesssim D\log N\) goes back to Christ--Duoandikoetxea--Rubio de Francia \cite{CDRdF}, whose Theorem 2 contains the planar estimate \eqref{eq:HV-RM} as well.  

%%%%%%%%%%%%%%%%%%%%%%%%%%%%%% SUBSECTION SUBSECTION SUBSECTION
\subsection{Subspace averages and the Nikodym problem}\label{sec:subspace} Averages along lines are the case \(d=1\) of averages along \(d\)-dimensional subspaces, and the whole range \(1\leq d<n\) is natural. For \(\sigma\in\Gr(d,n)\), \(s>0\) and finite \(\Sigma\subset\Gr(d,n)\) put
\begin{equation}
 f_{s,\sigma}(x)\coloneqq\int_{sB_n\cap\sigma}f(x-y)\,\frac{\d y}{s^d}, \qquad
 \M_{\Sigma,S}f\coloneqq\sup_{s\in S}\sup_{\sigma\in\Sigma}|f|_{s,\sigma},
\end{equation}
with \(\d y\) the Lebesgue measure on \(\sigma\). These averages are thin, the plates \(sB_n\cap\sigma\) carrying no thickness, and no pointwise domination by the Hardy--Littlewood maximal operator is available. The sharp \(L^2\) growth in the cardinality is nevertheless known in every dimension and codimension.
%%%%%%%%%%%%%%%%%%%%%%%%%%%%%% THEOREM THEOREM THEOREM
\begin{theorem}[\cite{DPPsubspace}*{Theorem A}]\label{thm:subspace} Let \(1\leq d<n\). There holds
\begin{equation}\label{eq:subspace-single}
 \sup_{s>0}\ \sup_{\substack{\Sigma\subset\Gr(d,n)\\ \#\Sigma\leq N}}\left\|\M_{\Sigma,\{s\}}\right\|_{L^2(\R^n)} \lesssim N^{\frac{n-d-1}{2(n-d)}}\left(\log N\right)^{\frac{n-d}2},
\end{equation}
sharp in \(N\) when \(n=d+1\), and sharp up to the logarithmic factor in general.
\end{theorem}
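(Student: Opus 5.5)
The plan is to work on the Fourier side, after replacing the thin averages by smooth ones. For \(f\geq0\) we have \(|f|_{s,\sigma}\lesssim \int_\sigma f(x-y)\phi_s(y)\,\d y\), where \(\phi\) is a nonnegative Schwartz function on \(\sigma\cong\R^d\) that is bounded below on the unit ball and has \(\widehat\phi\) supported in the unit ball. This is the \(d\)-dimensional analogue of the choice of \(\varphi\) in \eqref{eq:smooth-rough}. By scaling we take \(s=1\). The resulting operator \(A_\sigma\) has multiplier \(\widehat\phi(P_\sigma\xi)\), with \(P_\sigma\) the orthogonal projection onto \(\sigma\). This multiplier is supported in the slab \(\{|P_\sigma\xi|\leq1\}\), a unit neighbourhood of the \((n-d)\)-plane \(\sigma^\perp\). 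We then insert a Littlewood--Paley decomposition \(f=\sum_{j\geq0}\Delta_jf\), with \(|\xi|\simeq2^j\) on the support of \(\widehat{\Delta_jf}\) for \(j\geq1\). Exactly as in Lemma~\ref{lem:cone}, the piece \(A_\sigma\Delta_j\) lives in the conical \(2^{-j}\)-neighbourhood \(\Gamma_{\sigma,j}\coloneqq\{\xi:|P_\sigma\xi|\lesssim2^{-j}|\xi|\}\) of \(\sigma^\perp\). As in Lemma~\ref{lem:switch}, its kernel is an \(L^1\)-normalized bump adapted to a plate of size \(1\) along \(\sigma\) and \(2^{-j}\) transversally.

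For each \(j\) we estimate \(\|\sup_{\sigma\in\Sigma}|A_\sigma\Delta_jf|\|_2\) in two ways.
\begin{enumerate}
\item \emph{Orthogonality.} Bounding the supremum by the \(\ell^2(\Sigma)\) sum gives \(\|\sup_\sigma|A_\sigma\Delta_jf|\|_2^2\leq\int|\widehat{\Delta_jf}|^2\sum_\sigma\ind_{\Gamma_{\sigma,j}}\). This reduces matters to the overlap number of the cones \(\Gamma_{\sigma,j}\).
\item \emph{Clustering.} Partition \(\Sigma\) into clusters of diameter \(2^{-j}\) in \(\Gr(d,n)\), and fix a representative \(\sigma_0\) in each cluster. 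The switch mechanism of Lemma~\ref{lem:switch} replaces \(A_\sigma\Delta_j\) by plate averages along \(\sigma_0\), at the cost of a Hardy--Littlewood maximal function in the directions transverse to \(\sigma_0\). This reduces the supremum to a maximal plate operator indexed by a \(2^{-j}\)-separated family.
\end{enumerate}

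We then combine the two estimates, running the orthogonality argument between clusters and the switch within clusters. The cones of distinct \(2^{-j}\)-separated clusters overlap at a given \(\xi\) only if the corresponding \(d\)-planes all lie in the \(2^{-j}\)-neighbourhood of \(\xi^\perp\). This gives a square-root saving over \(N^{1/2}\) that is governed by \(\min(N,2^{j\kappa})\), where \(\kappa\) is the dimension of the admissible family. For the planes, each \(j\) then contributes \(\min(N^{1/2},2^{j\beta})\) times a decay factor \(2^{-j\gamma}\). The decay comes from the orthogonal projection structure: only a \(2^{-j(n-d)}\) proportion of the dyadic annulus lies in each cone. Summing in \(j\), the two regimes should cross at \(2^j\simeq N^{1/(n-d)}\). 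There the bound is \(N^{\frac12-\frac1{2(n-d)}}=N^{\frac{n-d-1}{2(n-d)}}\), which is exactly the exponent in \eqref{eq:subspace-single}. The \((\log N)^{(n-d)/2}\) factor should come from the \(\simeq\log N\) scales \(j\) near the crossing, combined with Córdoba-type \(\sqrt{\log}\) losses in each of the \(n-d\) transverse directions. In the codimension-one case \(n-d=1\) the exponent vanishes and the bound reduces to the planar single-scale \(\sqrt{\log N}\) of Katz \cite{Katzsingle}.

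The main obstacle is the overlap and decay bookkeeping in the previous paragraph. Because \(\Sigma\) is arbitrary, the planes can all concentrate near a single \(\xi^\perp\), so a naive overlap count returns \(N\) and no saving. We would first try to handle this by making the decomposition multiscale in \(\Gr(d,n)\), adapting to the local density of \(\Sigma\) rather than using a fixed \(2^{-j}\)-grid. If that fails, the fallback is an induction on the codimension \(n-d\), which fibres \(\R^n\) over \(\sigma\)-directions and uses the \(n=d+1\) case as the base.
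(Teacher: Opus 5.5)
The reductions in your first paragraph are fine: the smooth model, the Littlewood--Paley pieces, the support of $A_\sigma\Delta_j$ in $\Gamma_{\sigma,j}$, and the plate domination of its kernel. But the outline stops exactly where the theorem begins. The bookkeeping that is supposed to produce $N^{\frac{n-d-1}{2(n-d)}}$ is never carried out, and the ingredients you list cannot produce it.

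\emph{There is no decay factor $2^{-j\gamma}$.} The multiplier $\widehat\phi(P_\sigma\xi)$ equals $\widehat\phi(0)>0$ on $\sigma^\perp$, which meets every dyadic annulus, so $\|A_\sigma\Delta_j\|_{L^2\to L^2}\simeq1$ for every $j$. The proportion of the annulus covered by $\Gamma_{\sigma,j}$ is invisible to an $L^2$ operator norm. That proportion is also $\simeq2^{-jd}$, not $2^{-j(n-d)}$, since $\sigma^\perp\cap\mathbb S^{n-1}$ has codimension $d$ in $\mathbb S^{n-1}$.

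\emph{The overlap count can be void.} It is the only possible gain, and, as you note, it can give nothing. If $n-d\geq2$ and $\Sigma\subset\Gr(d,H)$ for a hyperplane $H$, every $\Gamma_{\sigma,j}$ contains the normal line of $H$, so the square function gives $N^{1/2}$ at all fine scales. Yet this configuration is easier, since the operator acts fibrewise on translates of $H$ as a problem of codimension $n-d-1$. Separating such concentrated configurations from spread ones is the substance of the theorem. Neither your ``multiscale decomposition of $\Gr(d,n)$'' nor the induction on codimension is specified.

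\emph{The clustering step does not close anything.} It trades the thin operator at frequency $2^j$ for a thick-plate maximal operator over a $2^{-j}$-separated subfamily of $\Gr(d,n)$. Bounding its $L^2$ norm in terms of cardinality is a Nikodym-type problem of the same strength; compare \cite{DPPsubspace}*{Theorem C}.

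\emph{Combining the scales.} Since $\sup_\sigma$ does not commute with $\sum_j$, per-scale bounds can only be glued by the triangle inequality or Cauchy--Schwarz in $j$. Either costs a power of the number of scales, and for arbitrary $\Sigma$ that number is not controlled by $N$: the separation of $\Sigma$ is arbitrary, and for lacunary-type configurations the cluster structure changes at $\simeq N$ distinct scales. Take $n=2$, $d=1$, where your clustering does give a uniform bound at each scale because $2^{-j}$-separated sectors have bounded overlap. Cauchy--Schwarz over the active scales yields $\sqrt{\log N}$ for $N^{-1}$-separated sets such as the equispaced ones, which is C\'ordoba's theorem. For lacunary-type sets it yields only a power of $N$, where the truth is $O(1)$.

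So the codimension-one case does not ``reduce to Katz''; for $d\geq2$ it is not a planar statement at all. The sharp $\sqrt{\log N}$ requires treating all plate thicknesses at once. In the plane this is Katz's argument. For $(n-1)$-dimensional plates it is the directional Carleson embedding \eqref{eq:carleson-embedding}, which \S\ref{sec:square} identifies as the mechanism in codimension one. Your attribution of $(\log N)^{\frac{n-d}2}$ to ``scales near the crossing'' is numerology.

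\emph{Sharpness is not addressed.} The lower bounds are elementary. Fix a $(d-1)$-plane $W$, write $x=x_W+x_{W^\perp}$, and take $\Sigma=\{W\oplus\mathrm{span}(v):\,v\in V_\delta\}$. Here $V_\delta$ is a $\delta$-net of the unit sphere of $W^\perp\cong\R^{n-d+1}$, so that $N\simeq\delta^{-(n-d)}$. Testing on $g(x_{W^\perp})\ind_{\{|x_W|<R\}}(x)$ with $R\to\infty$ reduces $\M_{\Sigma,\{1\}}$ to $\M_{V_\delta,1}$ on $\R^{n-d+1}$. Then \eqref{eq:radial-subcritical} at $p=2$ gives $N^{\frac{n-d-1}{2(n-d)}}$ when $n-d\geq2$, and \eqref{eq:radial-single-scale-endpoint} gives $(\log N)^{\frac12}$ when $n-d=1$.
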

%%%%%%%%%%%%%%%%%%%%%%%%%%%%%% THEOREM THEOREM THEOREM
Codimension one is the case in which \(L^2(\R^n)\) is critical rather than subcritical, so that the logarithm in \eqref{eq:subspace-single} is not an artifact. There the multiscale operator is controlled as well, and sharply: for finite \(\Sigma\subset\Gr(d,d+1)\),
\begin{equation}\label{eq:subspace-codim1}
 \sup_{\substack{\Sigma\subset\Gr(d,d+1)\\ \#\Sigma\leq N}} \left\|\M_{\Sigma,(0,\infty)}\right\|_{L^2(\R^{d+1})} \simeq\log N,
\end{equation}
with weak type norm \(\left(\log N\right)^{1/2}\) and \(L^p\) norms \(\left(\log N \right)^{1/p}\) for \(p>2\), all best possible; see \cite{DPPsubspace}*{Theorem B}. For \(d=1\) this returns Corollary \ref{cor:logN}, and the mechanism behind it, a directional Carleson embedding for \((n-1)\)-dimensional plates, is described in \S\ref{sec:square}.

Running alongside the subspace averages is the \((d,n)\)-Nikodym maximal function \(\mathrm N_\delta\), the supremum of the averages of \(|f|\) over the \(\delta\)-neighborhoods of the affine \(d\)-plates through a given point. As for \(d=1\), bounds for \(\mathrm N_\delta\) with \(\eps\)-losses in \(\delta\) yield lower bounds for the Hausdorff dimension of \((d,n)\)-Nikodym sets, and elementary counterexamples single out the critical index \(p_\circ=n-d+1\).
%%%%%%%%%%%%%%%%%%%%%%%%%%%%%% CONJECTURE CONJECTURE CONJECTURE
\begin{conjecture}[\cite{DPPsubspace}*{Conjecture G}]\label{conj:nikodym}
Let \(1\leq d<n\). There holds
\[
\left\|\mathrm N_\delta\right\|_{L^p(\R^n)}\lesssim 
\begin{cases}
\delta^{-\frac{n-d+1-p}{p}}, & 1<p<n-d+1,
\\[2pt]
\left(\log\delta^{-1}\right)^{\alpha(n,d,p)}, & p\geq n-d+1,
\end{cases}
\]
for some exponent \(\alpha(n,d,p)>0\).
\end{conjecture}
%%%%%%%%%%%%%%%%%%%%%%%%%%%%%% CONJECTURE CONJECTURE CONJECTURE

The conjecture is verified in codimension one, where \(p_\circ=2\); for every other pair \((d,n)\) the sharp \(L^2\) bound, subcritical there, is \cite{DPPsubspace}*{Theorem C}. The passage from \(d=1\) to \(d>1\) is not a formality. The equivalence of the maximal Kakeya and maximal Nikodym conjectures recorded above fails for \(d>1\), and the corresponding critical exponents differ. This is already visible in the geometry: it is not known whether \((d,n)\)-Besicovitch sets exist for any \(d>1\), while it is known that they do not exist when \(d>n/2\) \cite{Falconer80} or when \(2^{d-1}+d\geq n\) \cite{Bourgain91}, the latter improved to \(\tfrac73 2^{d-2}+d\geq n\) in \cite{Oberlin}. On the other hand, \((d,n)\)-Nikodym sets do exist for every \(1\leq d<n\); see \cite{Falconer86}. Accordingly, \(\delta\)-free bounds are not excluded for the \((d,n)\)-Kakeya operator, while they are excluded for \(\mathrm N_\delta\) and for \(\M_{\Sigma,\{s\}}\). See \cite{Mattila} for these facts.

%%%%%%%%%%%%%%%%%%%%%%%%%%%%%% SUBSECTION SUBSECTION SUBSECTION
\subsection{Directional square functions and the Carleson embedding}\label{sec:square} The disc multiplier is the rough frequency projection on a disc; it is unbounded on \(L^p(\R^2)\) for every \(p\neq2\). This is Fefferman's theorem \cite{FeffBall}, whose proof runs through a Besicovitch set, and it is the initial reason of interest in the Kakeya problem. A polygon with \(N\) sides is a finitary model of the disc, so its multiplier norms must grow with \(N\), and quantifying that growth means proving quantitative \(\ell^2\) estimates for families of directional or conical multipliers with essentially disjoint supports. The model case is that of frequency projections onto finitely overlapping cones, which we state directly in \(\R^n\). Let \(B\subset\R^n\) be a ball centered on \(\mathbb S^{n-1}\), let \(\varphi_B\) be a smooth cutoff with \(\ind_B\leq\varphi_B\leq\ind_{3B}\), and define the smooth restriction to the frequency cone subtended by \(B\),
\begin{equation}\label{eq:cone-proj}
 \widehat{C^\circ_Bf}(\xi)\coloneqq \varphi_B\!\left(\frac{\xi}{|\xi|}\right)\widehat f(\xi),
 \qquad \xi\in\R^n\setminus\{0\} .
\end{equation}
%%%%%%%%%%%%%%%%%%%%%%%%%%%%%% THEOREM THEOREM THEOREM
\begin{theorem}[\cite{ADPHPR}*{Theorem A}, \cite{DPPsubspace}*{Theorem J}]\label{thm:cones}
Let \(\mathcal B\) be a collection of \(N\) balls centered on \(\mathbb S^{n-1}\) with overlap \(\sum_{B\in\mathcal B}\ind_{3B}\leq K\). Then
\begin{equation}\label{eq:cones-strong}
 \left\|\left\{C^\circ_Bf\right\}\right\|_{L^p(\R^n;\,\ell^2_{\mathcal B})}\lesssim_{K,p,n}\left(\log N\right)^{\frac12-\frac1p}\|f\|_{L^p(\R^n)},\qquad 2\leq p<4,
\end{equation}
together with the restricted type analogue, valid for every measurable \(E\subset\R^n\),
\begin{equation}\label{eq:cones-restricted}
 \left\|\left\{C^\circ_B\left(f\ind_E\right)\right\}\right\|_{L^4(\R^n;\,\ell^2_{\mathcal B})}\lesssim\left(\log N\right)^{\frac14}|E|^{\frac14}\|f\|_{L^\infty(\R^n)} .
\end{equation}
The dependence on \(N\) is best possible in both estimates, in every dimension.
\end{theorem}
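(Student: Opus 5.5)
The plan is to deduce \eqref{eq:cones-strong} by interpolation between the trivial \(L^2\) estimate and the restricted type estimate \eqref{eq:cones-restricted}, so that the substance of the argument lies in \eqref{eq:cones-restricted}.

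At \(p=2\) the bound holds without loss. By Plancherel, \(\sum_B|\varphi_B(\xi/|\xi|)|^2\leq\sum_B\ind_{3B}(\xi/|\xi|)\leq K\), which gives \(\|\{C^\circ_Bf\}\|_{L^2(\ell^2)}\leq K^{1/2}\|f\|_2\). Interpolating this with a restricted weak type \(L^4\) bound with constant \((\log N)^{1/4}\) produces, at \(1/p=(1-\theta)/2+\theta/4\), the constant \((\log N)^{\theta/4}=(\log N)^{1/2-1/p}\), which is the claimed power. To reach strong type on the open range \(2\leq p<4\), I would use Marcinkiewicz interpolation for the sublinear map \(f\mapsto\|\{C^\circ_Bf\}\|_{\ell^2}\), or the vector-valued restricted interpolation theorem.

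For \eqref{eq:cones-restricted}, I would write the \(L^4(\ell^2)\) norm to the fourth power as \(\|\sum_B|C^\circ_B g|^2\|_2^2\) with \(g=f\ind_E\). Then I would dualize against \(h\in L^2\) with \(\|h\|_2=1\), which leads to controlling \(\sum_B\int|C^\circ_Bg|^2h\). The first step is to split each cone projection into Littlewood--Paley annuli of frequency \(2^k\). At each scale, \(C^\circ_B\) restricted to that annulus is a smooth projection onto a \(2^k\times(2^k r_B)^{n-1}\) slab, whose kernel is \(L^1\)-normalized and adapted to a tube of length \(2^{-k}r_B^{-1}\) and width \(2^{-k}\) along the center direction \(v_B\) of \(B\) (after the standard transposition of roles). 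A Córdoba-type argument would then give
\[
\sum_B\int|C^\circ_Bg|^2h\lesssim\int\sum_B|C^\circ_Bg|^2\,\M_{V,(0,\infty)}^{\mathrm{tube}}h,
\]
where \(V=\{v_B\}\) has cardinality \(N\) and the maximal operator averages along the dual tubes/plates. This step should use the directional Carleson embedding for \((n-1)\)-dimensional plates mentioned after \eqref{eq:subspace-codim1}. Concretely, I would use the codimension one multiscale bound \eqref{eq:subspace-codim1} at \(p=2\), namely \(\|\M_{\Sigma,(0,\infty)}\|_{L^2}\lesssim\log N\), or more efficiently its weak type \((\log N)^{1/2}\) version, together with the restricted nature of \(g\) (\(|g|\leq\ind_E\)). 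The point is to spend only \((\log N)^{1/4}\) after taking fourth roots. Optimally, one applies Cauchy--Schwarz, so that the maximal operator enters through its weak \(L^2\) norm \((\log N)^{1/2}\), and then the fourth root of a squared quantity yields \((\log N)^{1/4}\).

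\textbf{Main obstacle.} The hardest step is making this duality argument see the weak type \((\log N)^{1/2}\) constant rather than the strong \(\log N\). That is exactly why the estimate is only restricted type at \(p=4\). I would attempt it by a level-set decomposition of \(h\), with a Carleson-measure packing of the tiles (annulus scale \(\times\) cone \(\times\) spatial tube) that uses \(|g|\leq\ind_E\).

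Sharpness would come from the radial/Besicovitch-type examples of \S\ref{sec:cex}. One tests on functions whose Fourier transform concentrates on \(N\) equispaced directions, and uses the lower bound \(\sqrt{\log N}\) at the level of a square function, reducing in the plane to the Kakeya compression constant \(C(\delta)\simeq(\log\delta^{-1})^{-1}\).
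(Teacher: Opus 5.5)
Your reduction to \eqref{eq:cones-restricted} is sound. Plancherel with \(\sum_B\varphi_B^2\le K\) gives the \(L^2\) bound, and real interpolation against a restricted type \(L^4\) estimate yields exactly \((\log N)^{\frac12-\frac1p}\) for \(2\le p<4\).

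The gap is in \eqref{eq:cones-restricted} itself, which carries all the content. Your displayed C\'ordoba-type inequality has the same square function \(\sum_B|C^\circ_Bg|^2\) on both sides. For \(h\ge0\) it holds trivially, because \(h\) is dominated a.e.\ by its maximal function. Since \(g\) never appears on its own, no bound for a maximal operator can be fed into it. What your numerology actually needs is a weighted estimate \(\int\sum_B|C^\circ_Bg|^2h\lesssim\int|g|^2\,\mathcal Mh\), with \(\mathcal M\) a maximal operator over the dual plates. Pairing \(|g|^2\in L^{2,1}\) (norm \(\lesssim\|f\|_\infty^2|E|^{1/2}\)) against \(\mathcal Mh\in L^{2,\infty}\) would then give \eqref{eq:cones-restricted}. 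You give no argument for such an estimate, and it is not what the paper uses.

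For the unsplit projections this estimate is false already when \(N=1\). On functions with Fourier support in the double cone \(\{\pm\xi/|\xi|\in B\}\), the operator \(C^\circ_B\) is the half-space projection in the direction of the center of \(B\). For the Hilbert transform, \(\int|Hf|^2w\lesssim\int|f|^2\M w\) fails: \(w=\ind_{[0,1]}\) and \(f=\ind_{[2,R]}\) give \((\log R)^2\) against \(\log R\).

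The Littlewood--Paley split must be performed on the \(L^4(\ell^2)\) norm, by vector-valued Littlewood--Paley theory, and not inside \(\int\cdot\,h\). After it, the needed estimate becomes a weighted Bessel inequality for wave packets of \(N\) orientations. The paper records that C\'ordoba's weighted method gives, for uniformly spaced cones only, a logarithmic bound with unspecified power.

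The missing ingredient is the one the paper indicates after \eqref{eq:balayage}. After the Littlewood--Paley split, \(\sum_{B,k}|C^\circ_BP_kg|^2\) is modeled by the balayage \(\sum_Qa_Q\ind_Q/|Q|\). Here \(Q\) runs over the plates dual to the regions \(\{|\xi|\sim2^k,\ \xi/|\xi|\in3B\}\), and \(a_Q=|\langle g,\phi_Q\rangle|^2\) is the energy of the corresponding wave packet. Two properties of these coefficients must be checked:
\begin{itemize}
\item \(\sum_Qa_Q\lesssim K\|g\|_2^2\le K\|f\|_\infty^2|E|\), by Bessel's inequality and the overlap hypothesis;
\item within each single orientation, the Carleson condition \(\sum_{Q\subset R}a_Q\lesssim\|f\|_\infty^2|R|\), by local orthogonality. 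This is where \(|g|\le\|f\|_\infty\ind_E\) enters, as an \(L^\infty\) bound.
\end{itemize}
For fixed \(B\) the plates form a one-parameter family, so \eqref{eq:carleson-embedding} with \(\gamma=0\) gives \(\|\sum_B|C^\circ_Bg|^2\|_2\lesssim(\log N)^{1/2}\|f\|_\infty^2|E|^{1/2}\). Taking a square root yields \eqref{eq:cones-restricted}.

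Your last paragraph gestures toward this but does not supply it. A level-set decomposition of \(h\) would require bounding \(\sum_{Q\in\mathcal T_j}a_Q\) by \(|\bigcup_{Q\in\mathcal T_j}Q|\) for plates of mixed orientations. There Bessel's inequality does not localize for free, because of wave-packet tails. It would also require a weak type bound for the maximal operator over these thick plates, which is not the thin-plate operator of \eqref{eq:subspace-codim1}. The directional Carleson embedding is precisely the device that asks for packing only within a fixed orientation, where tails are harmless.

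Finally, your sharpness paragraph names no test functions and remains a heuristic.
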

%%%%%%%%%%%%%%%%%%%%%%%%%%%%%% THEOREM THEOREM THEOREM
Outside the \(2\leq p\leq 4\) range, only trivial polynomial estimates are available, and the matching lower bounds come from the Besicovitch construction. The strong type endpoint \(p=4\) is open in every dimension, including the plane. For \(\{\Delta_j\}\) a collection of disjoint \(\delta\)-balls on \(\mathbb S^{n-1}\), Gan--Wu \cite{GanWu} prove the strong \(L^4(\R^n)\) estimate with \(\delta^{-\eps}\) in place of a power of the logarithm, and show this to be sharp in that formulation. Square functions of this type go back to Nagel--Stein--Wainger \cite{NSW}, where they are the engine behind the lacunary maximal theorem. For \(N\) uniformly spaced cones C\'ordoba \cite{CorGFA} obtained a logarithmic estimate with unspecified dependence by weighted methods. The corresponding statement for rough conical projections, obtained by bootstrapping Theorem \ref{thm:cones}, is \cite{ADPHPR}*{Theorem J}, at the cost of a doubly logarithmic correction.

As mentioned above, the application that motivates the seeking of the sharp exponent is the polygon multiplier. Writing \(T_{\mathcal P_N}\) for the Fourier restriction to a regular \(N\)-gon, a combination of Theorem \ref{thm:cones} with vector-valued directional estimates and with a sharp form of Meyer's lemma yields
\begin{equation}\label{eq:polygon}
 \left\|T_{\mathcal P_N}\right\|_{L^p(\R^2)}\lesssim\left(\log N\right)^{4\left|\frac12-\frac1p\right|},\qquad \tfrac43<p<4;
\end{equation}
see \cite{ADPHPR}*{Theorem B}. This is the currently best known bound, and it improves on C\'ordoba \cite{CordPoly}, whose implicit dependence necessarily exceeds \(\left(\log N\right)^{\frac54}\) near the endpoints of the range. It does not match the best known lower bound, which is the one for the constant in Meyer's lemma; see \ref{op:polygon}. Estimate \eqref{eq:polygon} concerns a single dilate. The supremum over dilates is a problem of Carleson type, discussed in \S\ref{sec:vf}, and goes through a finitary version of the spherical summation problem. For a convex polygon \(\mathcal P\subset\R^2\) containing the origin in its interior and \(\lambda>0\), let
\begin{equation}\label{eq:polygon-dilate}
 T_{\mathcal P,\lambda}f(x)\coloneqq \int_{\R^2}\widehat f(\xi)\,\ind_{\mathcal P}\left(\frac\xi\lambda\right)\mathrm e^{2\pi ix\cdot\xi}\,\d\xi,\qquad x\in\R^2,
\end{equation}
be the Fourier restriction to the \(\lambda\)-dilate of \(\mathcal P\). Bounds for \(\sup_{\lambda>0}|T_{\mathcal P,\lambda}f|\) on \(L^p(\R^2)\) entail almost everywhere convergence of the polygonal Fourier cutoffs, \(T_{\mathcal P,\lambda}f\to f\) as \(\lambda\to\infty\), for every \(f\in L^p(\R^2)\). This is the polygonal form of the Fourier summation problem which, for the disc, Carleson's theorem answers in dimension one. No logarithmic bound is known about it.  The first genuinely new case not covered by Carleson plus Littlewood-Paley is a sequence accumulating at a single dilate, such as \(\lambda_n=1-2^{-n}\), which unlike the full supremum does not obviously contain a proof of Carleson's theorem.
The Kakeya obstruction is absent altogether for the polygons whose slopes form a lacunary set. The transference principle of C\'ordoba--Fefferman \cite{CoFemult}, converts the uniform $L^{(p/2)'}$ bounds on  \(\M_V\)  into a $L^p$ bound for the single multiplier \(T_{\mathcal P,\lambda}\)  uniform in the number of sides. Whether the same holds for the supremum over dilates is not known. Both maximal variants are \ref{op:maxpolygon}.

What makes all of these estimates available at once is a single embedding theorem. Linearizing and passing to the adjoint reduces each of them to a balayage
\begin{equation}\label{eq:balayage}
 \mathrm T_{\mathcal Q}(a)\coloneqq \sum_{Q\in\mathcal Q}a_Q\frac{\ind_Q}{|Q|}
\end{equation}
of a sequence $\{a_Q\}_Q$ indexed by plates of varying orientation. The sequence may be assumed to satisfy a Carleson packing condition with constant $1$ only relative to subfamilies of plates sharing a common orientation. That directional Carleson condition already yields
\begin{equation}\label{eq:carleson-embedding}
 \left\|\mathrm T_{\mathcal Q}(a)\right\|_{L^2}\lesssim\left(\log N\right)^{\frac12}\left(\log\log N\right)^{\frac\gamma2}\Bigl(\sum_{Q}a_Q\Bigr)^{\frac12},
\end{equation}
with \(N\) the number of orientations and \(\gamma=0\) or \(1\) according as the plates form a one- or a two-parameter family in each fixed orientation; see \cite{ADPHPR}*{Theorem C} in the plane and \cite{DPPsubspace}*{Theorem I} for \((n-1)\)-dimensional plates, both descending from the technique of Katz \cite{KatzDuke} and Bateman \cite{Batduke}. Applied to the adjoint of the linearized maximal operator, \eqref{eq:carleson-embedding} returns \eqref{eq:subspace-codim1}. Coupled instead with time-frequency analysis, so that \(a_Q\) records the energy of a wave packet adapted to \(Q\), it returns \eqref{eq:cones-strong}. 

%%%%%%%%%%%%%%%%%%%%%%%%%%%%%% SUBSECTION SUBSECTION SUBSECTION
\subsection{Codimension-one directional multipliers}\label{sec:codim1} The two-dimensional multiplier \(m(\xi\cdot v)\) is a codimension one object: its symbol is constant along \(v^\perp\). In higher dimensions the natural codimension one analogue is therefore indexed not by directions but by hyperplanes. Let \(d=n-1\), let \(\sigma\in\Gr(d,n)\) and let \(v^\sigma_1,\ldots,v^\sigma_d\) be an orthonormal basis of \(\sigma\); given a family \(\{m_\sigma\}\) of H\"ormander--Mihlin multipliers on \(\R^d\), set
\begin{equation}
 T_{m,\sigma}f(x)\coloneqq\int_{\R^n}m_\sigma\left(\xi\cdot v^\sigma_1,\ldots,\xi\cdot v^\sigma_d\right)\widehat f(\xi)e^{2\pi ix\cdot\xi}\,\d\xi, \qquad T_{m,\Sigma}f\coloneqq\sup_{\sigma\in\Sigma}\left|T_{m,\sigma}f\right| .
\end{equation}
The model example is the vector of \(d\)-dimensional Riesz transforms, \(m_\sigma(\eta)=\eta/|\eta|\). The central result is a single-annulus theorem, which is the codimension one counterpart of the planar band-limited estimates of \S\ref{sec:vf}.

%%%%%%%%%%%%%%%%%%%%%%%%%%%%%% THEOREM THEOREM THEOREM
\begin{theorem}[\cite{BDPPR}]\label{thm:codim1} Let \(P_k\) be a smooth Littlewood--Paley projection onto \(|\xi|\simeq2^k\) and let \(\sigma(\cdot):\R^n\to\Gr(n-1,n)\) be measurable. If \(\sigma\mapsto m_\sigma\) is log-H\"older continuous in the H\"ormander--Mihlin topology, then
\[
\sup_{k\in\Z}\left\|T_{m_{\sigma(\cdot)}}P_kf\right\|_{L^{2,\infty}(\R^n)}\lesssim\|f\|_{L^2(\R^n)}, \qquad \sup_{k\in\Z}\left\|T_{m_{\sigma(\cdot)}}P_kf\right\|_{L^p(\R^n)}\lesssim\|f\|_{L^p(\R^n)},\quad 2<p<\infty .
\]
\end{theorem}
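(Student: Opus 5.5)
The plan is to follow the planar template of Lacey--Li \cite{LaceyLi}: first reduce to a model operator, then run a time-frequency analysis adapted to the codimension one geometry. By rescaling it suffices to treat \(k=0\), since the log-H\"older hypothesis and the measurability of \(\sigma(\cdot)\) are dilation invariant. Linearize the supremum by fixing the measurable choice \(x\mapsto\sigma(x)\). Then expand \(m_\sigma\) restricted to the annulus \(|\xi|\simeq1\). Frequencies with \(|\xi\cdot n_\sigma|\gtrsim1\), where \(n_\sigma\) is the normal to \(\sigma\), see a smooth symbol, and this part is dominated by directional maximal averages along the normal. The singular part lives near the hyperplane \(\sigma\), at distances \(2^{-j}\), \(j\geq0\). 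For each \(j\), decompose the sphere of normals into caps of radius \(2^{-j}\); log-H\"older continuity lets us freeze \(m_\sigma\) at the cap center up to an error summable in \(j\), the modulus \(1/\log(1/|\sigma-\sigma'|)\) being exactly what beats the number \(j\) of scales. This reduces the problem to a sum over \(j\) of operators \(\sum_{\text{caps}}\ind_{\{\sigma(x)\in\text{cap}\}}\,T_{\text{cap}}P_0 f\), each with a fixed smooth cone-type symbol.

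Next, discretize each piece into wave packets adapted to plates of size \(1\) in \(d\) directions and \(2^{j}\) along the normal, with frequency support in the dual box, in the spirit of the directional Carleson embedding \eqref{eq:carleson-embedding}. After localization to a thin sector, the variable hyperplane \(\sigma(x)\) determines a variable modulation parameter, exactly as in Figure~\ref{fig:lacey-li-carleson}. The operator becomes a \(d\)-parameter analogue of the Lacey--Li model sum: tiles \(P=(R_P,\omega_P)\) with \(\omega_P\) a cap of normals, and the model sum \(\sum_P\langle f,\phi_P\rangle\phi_P\ind_{\{x\in R_P:\,\sigma(x)\in\omega_P\}}\). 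For each tile we will define density \(\sup|\{x\in R:\sigma(x)\in\omega\}|/|R|\) over tiles \(R\supseteq R_P\) with \(\omega\subseteq\omega_P\), and size via the \(L^2\) energy of trees. The standard organization into trees then requires three estimates: a size lemma (Bessel over strongly disjoint trees), a density lemma, and a tree lemma. For \(2<p<\infty\) we interpolate the restricted weak type estimates, as in \cite{LaceyLi}.

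The main obstacle will be the density lemma. In the plane, Lacey--Li control the union of the maximal-density shadows with the weak \(L^2\) Kakeya-type bound of \(\sqrt{\log N}\) with \(N\) the eccentricity, combined with single-annulus localization that fixes the eccentricity up to \(O(1)\). In codimension one, the analogue is the maximal operator over \((n-1)\)-dimensional plates in \(N\) orientations, and I plan to invoke the weak type \(\sqrt{\log N}\) bound recorded after \eqref{eq:subspace-codim1}, via the Carleson embedding \eqref{eq:carleson-embedding}. This is exactly why the restriction to codimension one matters: there the plate maximal function is critical at \(L^2\) with only a logarithmic loss. The tree lemma then reduces, after rotation, to a maximal partial-sum operator for \(d\)-dimensional Fourier integrals in the normal variable only, which is one-dimensional Carleson in the normal direction and is therefore available. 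The orthogonality in the size lemma will follow from disjointness of the frequency caps, the higher-dimensional replacement for the planar sectors \(\Gamma_j\).
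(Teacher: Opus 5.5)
The paper does not prove Theorem~\ref{thm:codim1}. It only cites \cite{BDPPR} and describes the result as the codimension-one counterpart of Lacey--Li, so your template (single-annulus time-frequency analysis, with a modulation parameter set by \(\sigma(x)\)) is the expected one.

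The execution, however, rests on an inverted picture of where the symbol is singular. The multiplier of \(T_{m,\sigma}\) is \(m_\sigma(\Pi_\sigma\xi)\), where \(\Pi_\sigma\xi\coloneqq(\xi\cdot v^\sigma_1,\ldots,\xi\cdot v^\sigma_d)\). It is constant along the normal line \(\sigma^\perp\) and singular only where \(\Pi_\sigma\xi=0\), that is, on \(\R n_\sigma\). On \(|\xi|\simeq1\), the region near the hyperplane \(\sigma\) has \(|\Pi_\sigma\xi|\simeq1\) and carries a smooth symbol. The singular part sits where \(|\xi\cdot n_\sigma|\simeq1\), and it decomposes as \(|\Pi_\sigma\xi|\simeq2^{-j}\): a cap of radius \(2^{-j}\) about \(\pm n_\sigma\).

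You have exchanged these two regions, so your tiles come out transposed. The dual of such a cap is a plate of size \(2^j\) in the \(d\) directions of \(\sigma\) and \(1\) along the normal. Your boxes, of size \(1\) in \(d\) directions and \(2^j\) along the normal, are tubes. Tubes along \(n_\sigma\) are the geometry of the line-directional operator with symbol \(m(\xi\cdot n_\sigma)\), and for that operator the statement fails when \(n\geq3\):
\begin{itemize}
\item Its natural geometric input is a tube maximal function, which loses a power of the eccentricity on \(L^2(\R^n)\), cf.\ \eqref{eq:radial-subcritical}.
\item More decisively, through the Chang--Wilson--Wolff/Str\"omberg argument of \S\ref{sec:vf}, a uniform single-annulus bound for it would give \(\|H_V\|_{L^2(\R^n)}\lesssim\log\#V\). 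This contradicts the polynomial lower bound recorded after \eqref{eq:KP}.
\end{itemize}
The codimension-one plate estimate you invoke does not control unions of your tubes. So the proposal breaks exactly at the step meant to explain why codimension one matters.

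For the same reason the tree lemma is misidentified. In the correct geometry, the modulation parameter is the point where \(\sigma(x)^\perp\) crosses the horizontal slices of the band \(|\xi|\simeq|\xi_n|\simeq1\). A tree is then a \(d\)-dimensional maximally modulated singular integral in the variables parallel to \(\sigma\), of Carleson--Sj\"olin type, not one-dimensional Carleson in the normal variable.

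Two further steps would fail even after this correction.

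First, the density lemma. Single-annulus localization fixes the thickness of the plates, not their eccentricity, which ranges over all \(2^j\). This is already so in the plane, so Lacey--Li's density lemma cannot rest on a Kakeya bound with a \(\sqrt{\log}\) loss in the eccentricity. The weak-type \(\sqrt{\log N}\) bound after \eqref{eq:subspace-codim1} is for \(N\) fixed orientations. For a measurable \(\sigma(\cdot)\), discretizing at scale \(2^{-j}\) gives \(N\simeq2^{j(n-1)}\) and a loss \(\simeq\sqrt j\), so no uniform bound results. What is needed is an eccentricity-free estimate for plates of fixed thickness, polynomial in \(\delta^{-1}\). It should be driven by the density condition \(|\{x\in R:\sigma(x)\in\omega_R\}|\geq\delta|R|\), in the spirit of \eqref{eq:LK-weak2}.

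Second, the freezing step. Under log-H\"older continuity, freezing \(m_\sigma\) on caps of radius \(2^{-j}\) costs \(\simeq1/j\) in H\"ormander--Mihlin norm, and \(\sum_j 1/j=\infty\). The errors are therefore not summable scale by scale, contrary to your claim. The modulus has to be played against the number of scales active at a given point, as inside a tree, or exploited through square summability of \(1/j\).
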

%%%%%%%%%%%%%%%%%%%%%%%%%%%%%% THEOREM THEOREM THEOREM
Three consequences follow. The first is the sharp finite-cardinality estimate
\begin{equation}\label{eq:codim1-card}
\sup_{\{m_\sigma\}} \sup_{\substack{\Sigma\subset\Gr(n-1,n)\\ \#\Sigma\leq N}} \left\|T_{m,\Sigma}\right\|_{L^2(\R^n)} \simeq_n\log N,
\end{equation}
Beyond the band-limited setting the picture is incomplete. Fl\'orez-Amatriain \cite{Florez} has obtained \(L^p\) bounds for \(p>3/2\) when the subspace depends measurably on \(n-1\) horizontal variables, under the natural non-horizontal assumption on \(\sigma(\cdot)\). The range extends to \(p>1\) if $f$ is frequency supported in a horizontal band.

%%%%%%%%%%%%%%%%%%%%%%%%%%%%%% SECTION SECTION SECTION
\section{Algebraic almost orthogonality in higher dimensions}\label{sec:algebraic-AO} In higher dimensions, where \(\mathbb S^{n-1}\) carries no ordering, polynomial partitioning supplies a separating set \(\Omega\) that both simplifies the geometry and divides \(V\) into uniformly smaller pieces.\

We use the following form of polynomial partitioning of a set of directions, which is \cite{DPPAJM}*{Proposition 2.10}. The case \(n=3\) is stated as \cite{DPPsubspace}*{Proposition 5.11}. Given \(p\in\R[x_1,\ldots,x_n]\) we write \(Z(p)\coloneqq \{x\in\R^n:\,p(x)=0\}\), and we denote by \(\mathcal C(p)\) the set of connected components, or \emph{cells}, of \(\mathbb S^{n-1}\setminus Z(p)\).

%%%%%%%%%%%%%%%%%%%%%%%%%%%%%% PROPOSITION PROPOSITION PROPOSITION
\begin{proposition}\label{prop:polypart}
Let \(V\subset\mathbb S^{n-1}\) be finite with \(\#V\leq N^{n-1}\), let \(\delta>0\), and let \(D\geq2\) be an integer. There exists \(p\in\R[x_1,\ldots,x_n]\) of degree \(\lesssim D\) and a partition \(V=V_\circ\cup V_\times\) such that
\[
\sup_{v\in V_\times}\dist\left(v,Z(p)\right)<\delta, \qquad \#\mathcal C(p)\lesssim D^{n-1}, \qquad \sup_{C\in\mathcal C(p)}\#\left(V_\circ\cap C\right)\lesssim\left(\frac ND\right)^{n-1}.
\]
\end{proposition}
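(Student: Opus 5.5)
The plan is to derive the statement from the Guth--Katz polynomial ham sandwich theorem, applied to the point set \(V\subset\mathbb S^{n-1}\subset\R^n\), combined with a Milnor--Thom/Warren type bound on the number of connected components of the complement of a real hypersurface inside the sphere.

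\emph{Step 1: partition.} Iterating the polynomial ham sandwich theorem gives \(\tilde p\) of degree \(\lesssim D\) such that each connected component of \(\R^n\setminus Z(\tilde p)\) contains at most \(\#V/D^{n-1}\lesssim (N/D)^{n-1}\) points of \(V\). The hyperplane ham sandwich step needs only \(2^{j}\) bisections. After \(J\) rounds, the product of the bisecting polynomials has degree \(\lesssim\sum_{j\leq J}2^{j/n}\sim 2^{J/n}\).

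Here I would exploit that \(V\) lies on the \((n-1)\)-dimensional sphere. I would work in the ring \(\R[x]/(|x|^2-1)\), or equivalently pull back through a stereographic chart, so that the dimension count is \(n-1\). The number of monomials of degree \(\leq D\) restricted to \(\mathbb S^{n-1}\) is \(\sim D^{n-1}\). Hence \(2^{J}\simeq D^{n-1}\) cells can be achieved with degree \(\lesssim D\). I would also quote the Stone--Tukey/Guth--Katz bisection for finite sets directly on the sphere: polynomials of degree \(\le d\) restricted to \(\mathbb S^{n-1}\) form a space of dimension \(\sim d^{n-1}\), so any \(\sim d^{n-1}\) finite sets can be simultaneously bisected.

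\emph{Step 2: cell count.} I would apply the Milnor--Thom/Warren bound to \(\mathbb S^{n-1}\setminus Z(p)\) as a semialgebraic set in \(\R^n\) cut out by \(|x|^2=1\) and \(p\neq0\). The refined Barone--Basu bound on a variety of dimension \(n-1\) gives \(\#\mathcal C(p)\lesssim D^{n-1}\), with the degree of the sphere absorbed in the constant.

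\emph{Step 3: the exceptional set.} Ham sandwich bisection places some points on \(Z(p)\) itself. To build in the room \(\delta>0\), I would define \(V_\times\coloneqq\{v:\dist(v,Z(p))<\delta\}\) and \(V_\circ=V\setminus V_\times\). Points of \(V_\circ\) lie in open cells, and each cell of \(\mathbb S^{n-1}\setminus Z(p)\) is contained in a cell of the partition from Step 1. The count \((N/D)^{n-1}\) therefore persists for \(V_\circ\cap C\), and the first condition is automatic.

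The main obstacle is Step 1's dimension reduction. I need the partition to give \(D^{n-1}\) cells, not \(D^n\), with the cell count also matching \(D^{n-1}\). The natural approach is to require that the sets being bisected lie on the sphere and use the dimension of the restricted polynomial space. To make the bisection argument work for finite point sets, one needs a mild perturbation or limiting step, as in Guth--Katz, which is where care is required.
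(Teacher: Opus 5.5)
Your argument is correct and uses the ingredients the paper points to for this cited result. You iterate a Guth--Katz ham sandwich with the bisecting polynomials taken in \(\R[x]/(|x|^2-1)\), a space of dimension \(\sim d^{n-1}\) in degree \(d\), then count the cells of \(\mathbb S^{n-1}\setminus Z(p)\) by Barone--Basu and set \(V_\times=\{v\in V:\dist(v,Z(p))<\delta\}\). State explicitly that your \(p\) does not vanish identically on \(\mathbb S^{n-1}\), since each factor is a nonzero class in the integral domain \(\R[x]/(|x|^2-1)\); this is what makes \(\Omega=\mathbb S^{n-1}\cap Z(p)\) genuinely \((n-2)\)-dimensional (as literally stated, \(p=|x|^2-1\) already satisfies the proposition), and it, rather than the \(D^{n-1}\) versus \(D^n\) numerology, is the real reason to bisect on the sphere rather than in \(\R^n\).
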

%%%%%%%%%%%%%%%%%%%%%%%%%%%%%% PROPOSITION PROPOSITION PROPOSITION

The partitioning input rests on ideas and results of Barone--Basu \cite{BaroneBasu}, Basu--Sombra \cite{BaSo}, Guth \cites{G1,Guth2}, Guth--Katz \cite{GK}, Matou\v{s}ek--Pat\'akov\'a \cite{MP} and Zahl \cites{Zahl,Zahl2}; see also \cite{sheffer} for a textbook account. The directions in \(V_\times\) are near \(Z(p)\) rather than on it. However, as the dependence on the parameter $\delta$ is qualitative, we may replace them by a direction on \(Z(p)\) at no cost, by \eqref{eq:switch} and a qualitative Fourier support condition. We therefore assume \(V_\times\subset Z(p)\) throughout.

%%%%%%%%%%%%%%%%%%%%%%%%%%%%%% SUBSECTION SUBSECTION SUBSECTION
The separating set is now \(\Omega\coloneqq \mathbb S^{n-1}\cap Z(p)\), an algebraic subvariety of \(\mathbb S^{n-1}\) of dimension \(n-2\) and controlled degree, and the cells \(C\in\mathcal C(p)\) play the role of the arcs \(\Omega_j\) of \S\ref{sec:AO-plane}.

%%%%%%%%%%%%%%%%%%%%%%%%%%%%%% PROPOSITION PROPOSITION PROPOSITION
\begin{theorem}[Algebraic almost orthogonality]\label{thm:AO-algebraic}
Let \(p\in\R[x_1,\ldots,x_n]\) have degree \(D\) and set \(\Omega\coloneqq \mathbb S^{n-1}\cap Z(p)\). Then, for every finite \(V\subset\mathbb S^{n-1}\) and every \(S\subset(0,\infty)\),
\begin{equation}\label{eq:AO-algebraic}
\left\|A_{V,S}\right\|_{L^2(\R^n)}\lesssim \sup_{\substack{U\subset\Omega\\ \#U\leq\#V}}\left\|A_{U,S}\right\|_{L^2(\R^n)}+D^{\frac{n-2}{2}}\sup_{C\in\mathcal C(p)}\left\|A_{V\cap C,S}\right\|_{L^2(\R^n)} .
\end{equation}
\end{theorem}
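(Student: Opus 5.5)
We follow the scheme of the proof of Theorem~\ref{thm:AO-plane}, with the cells \(C\in\mathcal C(p)\) in place of the arcs \(\Omega_j\) and the Fourier cones
\[
\Gamma_C\coloneqq\{\xi\neq0:\ \xi^\perp\cap C\neq\emptyset\}
\]
in place of the sectors \(\Gamma_j\). Here \(\Gamma_C\) is the set of frequencies whose orthogonal hyperplane meets \(C\).

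\emph{Step 1: localization.} For \(v\in V\cap C\) we set \(\delta(v)\coloneqq\dist(v,\Omega)\) and choose \(\omega(v)\in\Omega\) nearest to \(v\). Directions of \(V\) lying on \(\Omega\) go directly into the first term. The analogue of \eqref{eq:AO-cone-incl} is
\[
\Gamma_{v,\delta(v)}\subset\Gamma_C .
\]
To see this, take \(\xi\in\Gamma_{v,\delta(v)}\). Then \(|\xi\cdot v|<2^{-2}\delta(v)|\xi|\), so the great sphere \(\xi^\perp\cap\mathbb S^{n-1}\) passes within distance \(<\delta(v)/4\) of \(v\). That nearby point lies in the ball \(B(v,\delta(v))\), which is disjoint from \(\Omega\) and connected, hence contained in \(C\).

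Lemma~\ref{lem:cone} then gives \(A^{<\delta(v)}_{v,r}f=A^{<\delta(v)}_{v,r}\Gamma_Cf\). Arguing exactly as in the planar proof, with Lemma~\ref{lem:switch} applied at \(u=\omega(v)\), we get
\[
|A_{v,r}f|\le|A_{v,r}\Gamma_Cf|+C\sum_{k\ge0}2^{-kn}\Bigl(\M\bigl[\M_{\omega(v),2^kr}f\bigr]+\M\bigl[\M_{\omega(v),2^kr}\Gamma_Cf\bigr]\Bigr).
\]

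\emph{Step 2: the error terms.} Set \(U\coloneqq\{\omega(v):v\in V\}\subset\Omega\), so that \(\#U\le\#V\). The term involving \(f\) is controlled, via \eqref{eq:scales-equiv}, by \(\sup_{U}\|A_{U,S}\|_{L^2}\).

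\emph{Step 3: orthogonality, the main obstacle.} The cones \(\Gamma_C\) are no longer disjoint. A hyperplane \(\xi^\perp\) is a great \((n-2)\)-sphere, and it may cross many cells. The replacement for \eqref{eq:AO-orth} is the overlap bound
\[
\sum_{C}\ind_{\Gamma_C}\lesssim D^{n-2}.
\]
This is a cell-counting statement: a generic great sphere \(\xi^\perp\cap\mathbb S^{n-1}\), of dimension \(n-2\), meets at most \(O(D^{n-2})\) connected components of \(\mathbb S^{n-1}\setminus Z(p)\). Restricting \(p\) to that \((n-2)\)-sphere, each cell it crosses contains at least one component of \(\xi^\perp\cap\mathbb S^{n-1}\setminus Z(p)\). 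The number of such components is \(\lesssim D^{n-2}\) by Milnor--Thom/Barone--Basu type bounds for sign conditions on a variety of dimension \(n-2\). Non-generic \(\xi\) form a null set and can be ignored.

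It follows that \(\sum_C\|\Gamma_Cf\|_2^2\lesssim D^{n-2}\|f\|_2^2\). The maximum-by-square-function argument then bounds the first term by \(D^{\frac{n-2}{2}}\sup_C\|A_{V\cap C,S}\|_{L^2}\).

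The third term is the \(\Gamma_C f\) error. Summed this way it would naively also carry a factor \(D^{(n-2)/2}\) in front of \(\|A_{U,S}\|\). To avoid this, we apply the switch bound in the form of the first inequality of \eqref{eq:switch}, using the unsplit function \(f\). Alternatively, we accept the loss, since for the recursion only the stated placement of \(D^{\frac{n-2}2}\) matters, and we check whether the constants can be absorbed. The key difficulty is making the cell-crossing count rigorous and uniform in \(\xi\).
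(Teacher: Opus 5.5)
Your localization and counting steps are sound and agree with the paper. Your $\Gamma_C=\{\xi:\xi^\perp\cap C\neq\varnothing\}$ is slightly larger than the paper's $\bigcup_{v\in V\cap C}\Gamma_{v,\delta(v)}$ but works equally well. The inclusion you verify is Lemma~\ref{lem:cell-meets}, and the Barone--Basu count on $H_\xi$ is Lemma~\ref{lem:conical-square}. (Two small fixes: take $\delta(v)=\tfrac14\dist(v,\Omega)$, or cap it at $1$, so that Lemma~\ref{lem:switch} applies. Also, the distance from $v$ to $H_\xi$ is at most $\sqrt2\,|v\cdot\xi|/|\xi|$ rather than $<\delta(v)/4$, which is still enough.)

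The gap is the one you flag at the end, and neither remedy closes it. Your pointwise bound is the planar one. You rewrite $A^{<\delta(v)}_{v,r}\Gamma_Cf$ as $A_{v,r}\Gamma_Cf-A^{>\delta(v)}_{v,r}\Gamma_Cf$ and switch the last piece to $\omega(v)$, which produces $\M[\M_{\omega(v),2^kr}\Gamma_Cf]$. Taking $\max_C$ of this through the square function and \eqref{eq:conical-square} puts $D^{\frac{n-2}2}$ in front of $\sup_U\|A_{U,S}\|$.

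Switching ``with the unsplit $f$'' cannot control this term. It is a function of $\Gamma_Cf$, and the rough conical projection admits no pointwise domination by $|f|$. Accepting the loss proves only $\|A_{V,S}\|\lesssim D^{\frac{n-2}2}\bigl(\sup_U\|A_{U,S}\|+\sup_C\|A_{V\cap C,S}\|\bigr)$, which is not \eqref{eq:AO-algebraic}. The placement of $D^{\frac{n-2}2}$ on the cell term alone is exactly the content of the theorem, and, as the paper notes, with the factor on the wall term the recursion does not close.

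The missing observation is that the high-frequency part of $A_{v,r}\Gamma_Cf$ must be charged to the cell operator, not to the wall. The multiplier of $A^{>\delta}_{v,r}$ is that of $A_{v,r}$ times $\Phi(r\delta\xi/4)$. Its kernel is an $L^1$-normalized Schwartz bump at scale $r\delta$, so $|A^{>\delta}_{v,r}g|\lesssim\M[A_{v,r}g]$ for every $g$. Consequently $|A^{<\delta(v)}_{v,r}\Gamma_Cf|\lesssim\M[A_{v,r}\Gamma_Cf]\le\M[A_{V\cap C,S}\Gamma_Cf]$, and for $v\in V\cap C$, $r\in S$,
\[
|A_{v,r}f|\le\bigl|A^{>\delta(v)}_{v,r}f\bigr|+\bigl|A^{<\delta(v)}_{v,r}\Gamma_Cf\bigr|\lesssim\sum_{k\geq0}2^{-kn}\M\bigl[\M_{\omega(v),2^kr}f\bigr]+\M\bigl[A_{V\cap C,S}\Gamma_Cf\bigr].
\]
Only two terms survive. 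The wall term involves $f$ alone and is bounded by $\sup_U\|A_{U,S}\|\,\|f\|_2$ via \eqref{eq:scales-equiv}, with no square function. The cell term is handled by $\max_C\le\ell^2_C$, the $L^2$ bound for $\M$ and \eqref{eq:conical-square}, giving at most $D^{\frac{n-2}2}\sup_C\|A_{V\cap C,S}\|\,\|f\|_2$.

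Equivalently, you may bound $|A^{>\delta(v)}_{v,r}\Gamma_Cf|$ by the first inequality of \eqref{eq:switch} with plates along $v$ itself rather than $\omega(v)$. This gives $\sum_k2^{-kn}\M[\M_{v,2^kr}\Gamma_Cf]$ with $v\in V\cap C$, which is again a cell term.
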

%%%%%%%%%%%%%%%%%%%%%%%%%%%%%% THEOREM THEOREM THEOREM

This is \cite{DPPsubspace}*{Theorem E}, and the case \(n=2\) recovers Theorem \ref{thm:AO-plane} up to the value of the constants, since \(D^{(n-2)/2}=1\) there. The gain is that the wall term involves only directions constrained to a variety of dimension \(n-2\) and degree \(O(D)\), an object of strictly lower complexity than \(\mathbb S^{n-1}\).

Fix \(C\in\mathcal C(p)\) and \(v\in V\cap C\), let \(u(v)\in\Omega\) be a closest point of \(\Omega\) to \(v\) and put
\begin{equation}\label{eq:delta-cell}
\delta(v)\coloneqq \tfrac14\left|v-u(v)\right|=\tfrac14\dist\left(v,\Omega\right),\qquad \Gamma_C\coloneqq \bigcup_{v\in V\cap C}\Gamma_{v,\delta(v)},
\end{equation}
denoting as usual by \(\Gamma_C\) also the rough Fourier restriction to \(\Gamma_C\). We also set \(U(C)\coloneqq \{u(v):\,v\in V\cap C\}\) and \(U\coloneqq \bigcup_{C}U(C)\subset\Omega\), so that \(\#U\leq\#V\). The factor \(\tfrac14\) in \eqref{eq:delta-cell} guarantees that the spherical ball of radius \(\delta(v)\) about \(v\) does not meet \(\Omega\), hence lies in the cell \(C\), which is what the next lemma exploits. Below, \(H_\xi\coloneqq \mathbb S^{n-1}\cap\xi^\perp\).

%%%%%%%%%%%%%%%%%%%%%%%%%%%%%% LEMMA LEMMA LEMMA
\begin{lemma}\label{lem:cell-meets}
Let \(C\in\mathcal C(p)\) and \(\xi\in\Gamma_C\). Then \(C\cap H_\xi\neq\varnothing\).
\end{lemma}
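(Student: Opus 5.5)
Since \(\Gamma_C\) is by definition the union of the cones \(\Gamma_{v,\delta(v)}\) over \(v\in V\cap C\), we may fix \(v\in V\cap C\) with \(\xi\in\Gamma_{v,\delta(v)}\). The plan is to produce an explicit point of \(H_\xi\) that lies within spherical distance \(<\delta(v)\) of \(v\). By the remark following \eqref{eq:delta-cell}, the spherical ball of radius \(\delta(v)\) about \(v\) misses \(\Omega\) and hence is contained in \(C\). Such a point therefore lies in \(C\cap H_\xi\).

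The natural candidate is the normalized projection of \(v\) onto \(\xi^\perp\). Write \(\hat\xi=\xi/|\xi|\) and \(t\coloneqq v\cdot\hat\xi\). Membership in \(\Gamma_{v,\delta(v)}\) gives
\[
|t|=\frac{|\xi\cdot v|}{|\xi|}<2^{-2}\delta(v)\leq \tfrac1{16}.
\]
In particular \(v\neq\pm\hat\xi\), so \(w\coloneqq v-t\hat\xi\neq0\). Set \(u\coloneqq w/|w|\). Then \(u\in\mathbb S^{n-1}\) and \(u\cdot\xi=0\), so \(u\in H_\xi\).

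It remains to estimate \(|u-v|\). Since \(|w|=\sqrt{1-t^2}\), we have \(u\cdot v=|w|\), and therefore
\[
|u-v|^2=2\left(1-\sqrt{1-t^2}\right)\leq 2t^2 .
\]
This gives \(|u-v|\leq\sqrt2\,|t|<\tfrac{\sqrt2}{4}\delta(v)<\delta(v)\). The geodesic distance is at most \(\tfrac\pi2|u-v|<\tfrac{\pi\sqrt2}{8}\delta(v)<\delta(v)\), so either metric on the sphere works. Thus \(u\) lies in the ball of radius \(\delta(v)\) about \(v\), hence \(u\in C\), and \(u\in C\cap H_\xi\).

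The only point needing care is the containment of that ball in \(C\). Since \(\dist(v,\Omega)=4\delta(v)\), the chordal ball \(B(v,\delta(v))\cap\mathbb S^{n-1}\) is disjoint from \(\Omega\). It is also connected, being a spherical cap of small radius. It contains \(v\in C\), and \(C\) is a connected component of \(\mathbb S^{n-1}\setminus Z(p)\). Hence the whole cap lies in \(C\). This is the step I would state explicitly, but it is not a real obstacle.
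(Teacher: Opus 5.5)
Your proof is correct and follows the paper's argument: pick $v\in V\cap C$ with $\xi\in\Gamma_{v,\delta(v)}$, normalize the projection of $v$ onto $\xi^\perp$, and use that the cap of radius $\delta(v)$ about $v$ lies in $C$. You also make explicit the constant $|u-v|\leq\sqrt2\,|t|$ that the paper hides in ``$\lesssim$'', and you spell out the connectedness argument for the cap. One harmless slip: since $|v-u(v)|\leq2$, one only has $\delta(v)\leq\tfrac12$, so the bound is $|t|<\tfrac18$ rather than $\tfrac1{16}$; this changes nothing, because only $|t|<1$ is used.
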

%%%%%%%%%%%%%%%%%%%%%%%%%%%%%% LEMMA LEMMA LEMMA

%%%%%%%%%%%%%%%%%%%%%%%%%%%%%% PROOF PROOF PROOF
\begin{proof} As \(\Gamma_C\) is a union of cones we may take \(|\xi|=1\). By \eqref{eq:delta-cell} there is \(v\in V\cap C\) with \(\xi\in\Gamma_{v,\delta(v)}\), that is \(|v\cdot\xi|<2^{-2}\delta(v)\) by \eqref{eq:plate}. The orthogonal projection of \(v\) onto \(\xi^\perp\), normalized, is a point
\[
\tau\coloneqq \frac{v-(v\cdot\xi)\xi}{\left|v-(v\cdot\xi)\xi\right|}\in H_\xi, \qquad \left|v-\tau\right|\lesssim \left|v\cdot\xi\right|<\delta(v),
\]
so that \(\tau\) lies in the spherical ball of radius \(\delta(v)\) about \(v\), hence in \(C\).
\end{proof}
%%%%%%%%%%%%%%%%%%%%%%%%%%%%%% PROOF PROOF PROOF

Lemma \ref{lem:cell-meets} is the substitute for the disjointness of the cones \(\Gamma_j\) in the plane, and it is what converts an overlap count in the frequency variable into a count of connected components on an algebraic hypersurface of one dimension less.

%%%%%%%%%%%%%%%%%%%%%%%%%%%%%% LEMMA LEMMA LEMMA
\begin{lemma}\label{lem:conical-square} There holds
\begin{equation}\label{eq:conical-square}
 \sum_{C\in\mathcal C(p)}\left\|\Gamma_Cf\right\|_2^2\lesssim D^{n-2}\|f\|_2^2 .
\end{equation}
\end{lemma}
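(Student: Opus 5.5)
By Plancherel, the left-hand side of \eqref{eq:conical-square} equals
\[
\int_{\R^n}\Bigl(\sum_{C\in\mathcal C(p)}\ind_{\Gamma_C}(\xi)\Bigr)|\widehat f(\xi)|^2\,\d\xi,
\]
so it suffices to prove the pointwise overlap bound
\[
\sum_{C\in\mathcal C(p)}\ind_{\Gamma_C}(\xi)\lesssim D^{n-2}
\]
for almost every \(\xi\in\R^n\setminus\{0\}\). Since each \(\Gamma_C\) is a union of cones, we may take \(|\xi|=1\).

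By Lemma~\ref{lem:cell-meets}, every cell \(C\) with \(\xi\in\Gamma_C\) meets the great sphere \(H_\xi=\mathbb S^{n-1}\cap\xi^\perp\). The cells are disjoint open subsets of \(\mathbb S^{n-1}\setminus Z(p)\). Hence the nonempty sets \(C\cap H_\xi\) are pairwise disjoint, relatively open subsets of \(H_\xi\setminus Z(p)\), and each contains at least one connected component of \(H_\xi\setminus Z(p)\). A single component of \(H_\xi\setminus Z(p)\) is connected and avoids \(Z(p)\), so it lies in exactly one cell. The number of cells meeting \(H_\xi\) is therefore at most the number of connected components of \(H_\xi\setminus Z(p)\).

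It remains to count these components. Identify \(\xi^\perp\) with \(\R^{n-1}\). Then \(H_\xi\setminus Z(p)\) is the complement, in the unit sphere \(\mathbb S^{n-2}\subset\R^{n-1}\), of the zero set of the restricted polynomial \(p|_{\xi^\perp}\), which has degree at most \(D\). Equivalently, it is the set of points where the system \(|x|^2-1=0\), \(x\in\xi^\perp\), holds and \(p(x)\neq0\), a semialgebraic set of complexity \(O(D)\) in ambient dimension \(n-1\) and dimension \(n-2\).

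The Milnor--Thom bound, in the refined form of Barone--Basu \cite{BaroneBasu} or Basu--Sombra \cite{BaSo} for the number of cells a degree-\(D\) polynomial cuts out of a \(k\)-dimensional variety of bounded degree, gives \(\lesssim D^{k}\) components. Here \(k=n-2\), the variety being the sphere \(\mathbb S^{n-2}\) of degree \(2\). This is the same count that yields \(\#\mathcal C(p)\lesssim D^{n-1}\) in Proposition~\ref{prop:polypart}, now one dimension lower, and it gives at most \(O(D^{n-2})\) components uniformly in \(\xi\).

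The degenerate case in which \(p\) vanishes identically on \(H_\xi\) needs a separate remark. Then no cell meets \(H_\xi\), and the overlap at \(\xi\) is zero. In any case it occurs only for \(\xi\) in a null set.

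The only genuine input, and the main obstacle, is the component count. One must make sure that the bound is \(D^{n-2}\), with the dimension of \(H_\xi\) rather than the ambient dimension \(n-1\) of \(\xi^\perp\). Plain Milnor--Thom applied in \(\xi^\perp\) would give \(D^{n-1}\). The sharper estimate comes from the Barone--Basu bound on components of a polynomial's complement within a real variety of dimension \(n-2\) and bounded degree. This is exactly where the algebraic structure replaces the planar disjointness of the cones \(\Gamma_j\), which is the case \(n=2\): there \(H_\xi\) is two antipodal points and the overlap is \(O(1)\).
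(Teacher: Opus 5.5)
Your proposal is correct and follows the paper's proof essentially step for step: Plancherel reduces the estimate to a pointwise overlap count, Lemma~\ref{lem:cell-meets} forces every cell $C$ with $\xi\in\Gamma_C$ to meet $H_\xi$, those cells inject into the components of $H_\xi\setminus Z(p)$, and the Barone--Basu bound on the $(n-2)$-sphere $H_\xi$ gives $D^{n-2}$. One small aside is inaccurate: degenerate $\xi$ need not form a null set, since this fails when $\mathbb S^{n-1}\subset Z(p)$. That case is harmless, because then $\mathcal C(p)=\varnothing$, and you had already observed that the overlap vanishes there anyway.
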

%%%%%%%%%%%%%%%%%%%%%%%%%%%%%% LEMMA LEMMA LEMMA

%%%%%%%%%%%%%%%%%%%%%%%%%%%%%% PROOF PROOF PROOF
\begin{proof} By Plancherel it suffices to show that \(\#\{C\in\mathcal C(p):\,\xi\in\Gamma_C\}\lesssim D^{n-2}\) for almost every \(\xi\in\mathbb S^{n-1}\). Fix such a \(\xi\).  If \(\xi\in\Gamma_C\), then \(C\cap H_\xi\neq\varnothing\) by
Lemma \ref{lem:cell-meets}. Each connected component of \(H_\xi\setminus Z(p)\) is contained in a unique cell of \(\mathbb S^{n-1}\setminus Z(p)\). Hence, choosing one such component contained in \(C\cap H_\xi\) for every cell \(C\) meeting \(H_\xi\) gives an injection
\[
\{C\in\mathcal C(p):C\cap H_\xi\neq\varnothing\} \hookrightarrow \pi_0(H_\xi\setminus Z(p)).
\] 
It therefore suffices to know that \(H_\xi\setminus Z(p)\) has \(\lesssim D^{n-2}\)connected components, which is the Barone--Basu bound applied on the \((n-2)\)-dimensional sphere \(H_\xi\); see \cites{BaroneBasu,sheffer}.
\end{proof}
%%%%%%%%%%%%%%%%%%%%%%%%%%%%%% PROOF PROOF PROOF

%%%%%%%%%%%%%%%%%%%%%%%%%%%%%% PROOF PROOF PROOF
\begin{proof}[Proof of Theorem \ref{thm:AO-algebraic}] The argument is that of Theorem \ref{thm:AO-plane}, run with the cells \(C\in\mathcal C(p)\) in place of the arcs \(\Omega_j\), the cones \(\Gamma_C\) of \eqref{eq:delta-cell} in place of the \(\Gamma_j\), and \(u(v)\) in place of \(\omega(v)\); the inclusion \(\Gamma_{v,\delta(v)}\subset\Gamma_C\) that replaces \eqref{eq:AO-cone-incl} is immediate from \eqref{eq:delta-cell}. Three changes are needed.

First, the cones \(\Gamma_C\) are not pairwise disjoint, so the orthogonality \eqref{eq:AO-orth} is unavailable and \eqref{eq:conical-square} takes its place, leading to the factor \(D^{\frac{n-2}2}\) in \eqref{eq:AO-algebraic}. Second, the high-frequency piece has to be kept inside the cell operator. The multiplier of \(A^{>\delta}_{v,r}\) is that of \(A_{v,r}\) times the radial cutoff of \eqref{eq:lowhigh}, which is the Fourier transform of an \(L^1\)-normalized approximate identity at scale \(r\delta\), so that \(|A^{>\delta}_{v,r}g|\lesssim\M[A_{v,r}g]\) for every \(g\), and therefore \(|A^{<\delta(v)}_{v,r}f|=|A^{<\delta(v)}_{v,r}\Gamma_Cf|\lesssim\M[A_{v,r}\Gamma_Cf]\). Were \(A^{>\delta}_{v,r}\Gamma_Cf\) sent to the wall term instead, as it is in the plane, the factor \(D^{\frac{n-2}2}\) would multiply that term as well and the recursion below would not close.
Third, and consequently, in the analogue of \eqref{eq:AO-three} only two terms survive, and the square function is applied to the cellular one alone; the wall term is estimated directly by \(\|A_{U,S}\|_{L^2(\R^n)}\) using \eqref{eq:scales-equiv}, and \(U\subset\Omega\) has \(\#U\leq\#V\). Everything else, including the application of \eqref{eq:switch} to \(f\), is unchanged.
\end{proof}
%%%%%%%%%%%%%%%%%%%%%%%%%%%%%% PROOF PROOF PROOF

Theorem \ref{thm:AO-algebraic} separates the directions on the wall \(\Omega\) from those in the cells, which may be split again by Proposition \ref{prop:polypart}. Iterating the cell step and optimizing in \(D\) closes a recursion in the cardinality at fixed ambient dimension, once the wall directions are bounded separately. For \(n\geq4\) this would require a second descent step, and none is available; see \ref{op:descent}. In \(\R^3\), instead, the wall is an algebraic curve, the terminal case of the descent, where at single scale the answer is logarithmic rather than polynomial.

%%%%%%%%%%%%%%%%%%%%%%%%%%%%%% THEOREM THEOREM THEOREM
\begin{theorem}[\cite{DPPAJM}*{Theorem D}]\label{thm:curve-single}
Let \(Z\subset\mathbb S^2\) be a smooth real algebraic curve of degree \(D\), given by \(Z=\{P=0\}\cap \mathbb S^2\), with \(P\) transverse to \(\mathbb S^2\) along $Z$. Then
\begin{equation}\label{eq:curve-single}
 \sup_{\substack{V\subset Z\\ \#V\leq N}}\left\|A_{V,\{1\}}\right\|_{L^2(\R^3)}\lesssim D^{\frac12}\left(\log N\right)^{\frac32}.
\end{equation}
\end{theorem}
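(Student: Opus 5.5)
We plan to prove \eqref{eq:curve-single} by a one-dimensional recursion along the curve, in the spirit of Corollary~\ref{cor:logN}, with an algebraic almost orthogonality step adapted to a separating set of points on \(Z\). We first decompose \(Z\) into \(O(D^2)\) arcs, or better \(O(D)\) arcs by Harnack and a critical-point count, on which the Gauss-type map \(v\mapsto\) (tangent direction of \(Z\) at \(v\)) is monotone and nondegenerate. On each such arc \(\gamma\) the directions carry a natural linear order, playing the role of the counterclockwise order of \(\mathbb S^1\) in Theorem~\ref{thm:AO-plane}.

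Fix such an arc and choose a median point \(\omega\in V\cap\gamma\). This splits \(V\cap\gamma\) into two subarcs \(\gamma_\pm\) with at most \(\lceil N/2\rceil\) points each. The separating set is the single point \(\omega\), or a bounded number of endpoints. The cone analogue of \eqref{eq:AO-cone-incl} will be the following: for \(v\in\gamma_\pm\) at distance \(4\delta(v)\) from \(\omega\), the cone \(\Gamma_{v,\delta(v)}\) should lie in a region \(\Gamma_\pm\) of frequency space. Here \(\Gamma_\pm\) is the union of the hyperplanes \(v^\perp\), \(v\in\gamma_\pm\), which is a "fan" swept out as \(v\) moves along the arc.

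The key claim, replacing disjointness in the plane, is that almost every \(\xi\) lies in a bounded number of these fans, so that \(\sum\|\Gamma_\pm f\|_2^2\lesssim\|f\|_2^2\). Indeed, \(\xi\in\Gamma_{v,\delta(v)}\) forces the great circle \(H_\xi\) to pass within \(\delta(v)\) of \(v\), as in Lemma~\ref{lem:cell-meets}. Along an arc of degree-\(D\) curve, \(H_\xi\) meets \(Z\) in \(\lesssim D\) points by Bézout. So over all arcs and all recursion levels, the overlap at each level is \(\lesssim D\).

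Running the proof of Theorem~\ref{thm:AO-plane} with the single scale \(S=\{1\}\), using Lemma~\ref{lem:switch} and Lemma~\ref{lem:cone}, gives at each dyadic generation a bound of the form
\[
\mathsf A(N)\leq C\,\|A_{\{\omega\},\{2^k\}}\|\text{-terms}+D^{1/2}\,\mathsf A(\lceil N/2\rceil).
\]
Iterated naively, this produces \(D^{(\log N)/2}\), which is unacceptable. So the square function must be taken \emph{globally over the whole dyadic tree of the recursion at once}, not level by level. We will organize all \(O(N)\) subarcs of all generations together and bound the maximal function by a sum over \(\log N\) generations. Each generation contributes an orthogonal piece with overlap \(D\), which gives \(D^{1/2}\) once, times \(\log N\) generations.

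The remaining logarithmic factors come from two sources. First, the switch terms at single scale involve \(\sup_{k\geq0}2^{-2k}\M\circ\M_{\omega,2^k}\), which is harmless. Second, we must pass from summing generations (a factor \(\log N\)) to an \(\ell^2\) sum over \(\log N\) generations of the endpoint directions \(\omega\): these endpoints form a set of \(\lesssim N\) directions on \(Z\) at single scale. A Chang–Wilson–Wolff or Rademacher–Menshov type step should cost \((\log N)^{1/2}\), and the single-scale Córdoba-type bound along a curve contributes another \(\log N\). Together these give \((\log N)^{3/2}\).

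The main obstacle I expect is exactly this bookkeeping. One has to show that the generations can be combined with only a single \(D^{1/2}\) loss while keeping the wall terms at single scale. The first approach I would try is to write \(A_{v,1}f\) as a telescoping sum along the path in the dyadic tree from the root to the leaf containing \(v\). Each increment \(A_{v_\ell,1}-A_{v_{\ell+1},1}\) is then localized, by the switch lemma, to frequencies in the fan of the level-\(\ell\) arc. Finally, I would apply the Rademacher–Menshov argument of \eqref{eq:HV-RM} to these \(\log N\) almost-orthogonal increments, with \(D\)-bounded overlap.
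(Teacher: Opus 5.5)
The survey quotes this theorem from \cite{DPPAJM} without a proof, so I am judging your argument on its own terms. It breaks at the step you call the key claim.

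From $\xi\in\Gamma_{v,\delta(v)}$ you only learn that the great circle $H_\xi$ passes within $\delta(v)$ of $v$. B\'ezout counts points of $H_\xi\cap Z$, not near passes. Suppose $H_\xi$ is tangent to $Z$ at a point of geodesic curvature $\kappa\neq0$. Then it stays within $\kappa t^2/2$ of $Z$ over arclength $t$. At a generation where the subarcs have length $L$, so that $\delta(v)\simeq L$ in their middle, $\xi$ lies in the cones of $\simeq(\kappa L)^{-1/2}$ consecutive subarcs. For equispaced $V$ this is $\simeq\sqrt N$ at the finest generations. The overlap persists for all $\xi$ in an open set: tilt $\xi$ slightly so that $H_\xi$ misses $Z$ near the tangency, and the intersection count drops to zero while the overlap does not. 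Plancherel therefore gives a square function constant $\simeq N^{1/4}$, not $D^{1/2}$. This is the tangency phenomenon the survey describes right after the statement. In your scheme it already bites at a single scale, because the apertures $\delta(v)$ take all sizes.

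If you instead take $\Gamma_\pm$ to be the fan $\bigcup_{w\in\gamma_\pm}w^\perp$, the count $\leq 2D$ becomes true, but the substitute for \eqref{eq:AO-cone-incl} fails. Take $\xi\cdot v$ small, $|\xi|\geq 8/\delta(v)$, and $\xi/|\xi|$ near the conormal of $Z$ at $v$ on its convex side. Then $\xi\cdot w$ has a fixed sign along a short subarc, so the Fourier support of $A^{<\delta(v)}_{v,1}$ leaves the fan.

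Independently of this, $\Gamma_+\cap\Gamma_-$ has positive measure even with only two pieces. The cell term therefore never carries constant $1$, and the recursion of Theorem~\ref{thm:AO-plane} cannot close. Your $(\log N)^{3/2}$ count is numerology: the ``single-scale C\'ordoba-type bound along a curve'' it invokes is the statement being proved. Also, Harnack-type bounds give $O(D^2)$ components, not $O(D)$.

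Your last paragraph is the right skeleton, but not for the reason you give. The switch lemma controls low frequencies pointwise; it does not Fourier-localize $A_{v',1}-A_{v,1}$. The multiplier $\widehat\varphi(\xi\cdot v')-\widehat\varphi(\xi\cdot v)$ lives on the union of two full bands, not inside a fan.

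Rademacher--Menshov chaining needs no localization. At each dyadic level it needs only the pointwise bound $\sum_I|\widehat\varphi(\xi\cdot v_{e(I)})-\widehat\varphi(\xi\cdot v_{s(I)})|^2\lesssim D$. Here $I$ runs over the disjoint dyadic blocks of the points of $V$ ordered along each component of $Z$, with first and last points $v_{s(I)},v_{e(I)}$. This bound comes from the smoothness of $\widehat\varphi$, not from supports. Each term is at most $2\|\widehat\varphi\|_\infty|\widehat\varphi(\xi\cdot v_{e(I)})-\widehat\varphi(\xi\cdot v_{s(I)})|$. The sum of these differences is at most the total variation of $\widehat\varphi(\xi\cdot x)$ along $Z$. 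By the area formula that variation is at most $\int_{\R}|\widehat\varphi'(u)|\,\#\{x\in Z:\,\xi\cdot x=u\}\,\d u\leq 2D\|\widehat\varphi'\|_1$. This uses B\'ezout on every plane $\{\xi\cdot x=u\}$, not only on $\xi^\perp$.

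Near a tangency, $\xi\cdot x$ varies slowly along $Z$. The many increments whose supports overlap there are therefore small, which is exactly what a support count cannot see. Plancherel and chaining then give $\|\sup_{v\in V}|A_{v,1}f-A_{v_0(v),1}f|\|_2\lesssim D^{1/2}\log N\,\|f\|_2$, where $v_0(v)$ is the first point of $V$ on the component of $v$. The supremum over these base points, one for each component meeting $V$, still needs a separate argument.
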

%%%%%%%%%%%%%%%%%%%%%%%%%%%%%% THEOREM THEOREM THEOREM

Estimate \eqref{eq:curve-single}  closes the recursion on Theorem \ref{thm:R3}. The transversality hypothesis is technical and can be removed up to further splitting. The corresponding multiscale problem is geometric rather than combinatorial. Already at a quadratic tangency, a band of width \(\delta\) meets the curve in an arc of length \(\simeq\sqrt\delta\) rather than \(\simeq\delta\), so a partition of the curve into arcs no longer localizes the relevant frequency cones, and the overlap count behind \eqref{eq:conical-square} is unavailable. Sharp multiscale bounds for directions on an arbitrary algebraic curve are the subject of work in progress of the authors; see also \ref{op:curves-n}.

%%%%%%%%%%%%%%%%%%%%%%%%%%%%%% SECTION SECTION SECTION
\section{Open problems}\label{sec:open} We collect the preceding questions by the mechanism needed to resolve them. Several are long-standing; a few, as far as we know, are new in this form.

%%%%%%%%%%%%%%%%%%%%%%%%%%%%%% SUBSECTION SUBSECTION SUBSECTION
\subsection{The plane} The planar theory is the most developed, which makes the remaining gaps sharper in outline.

\begin{enumerate}[label=\textup{(\roman*)},series=openproblems,leftmargin=*,labelsep=0.4em]
\item\label{op:weak}  Decide whether the weak type estimate
\begin{equation}\label{eq:weak-conj}
 \left\|T_{m,V}\right\|_{L^2(\R^2)\to L^{2,\infty}(\R^2)}\lesssim \sqrt{\log N}
\end{equation}
holds uniformly over all \(V\subset\mathbb S^1\) with \(\#V=N\) and over a fixed H\"ormander--Mihlin class. This would be  the exact singular integral counterpart of Katz's weak type improvement for \(\M_V\), and by \S\ref{sec:general} it would also settle the conjectured \(L^p\) bound \eqref{eq:DDP-Lp1}. Estimate \eqref{eq:weak-conj} is open even for the Hilbert transform, outside of  structured classes of \S\ref{sec:equi}, up to higher logarithms.
\item\label{op:intermediate} Between the two extremes \(\sqrt{\log N}\), attained on finite-order lacunary sets, and \(\log N\), attained on arbitrary sets, lies an entire range that is at present uncharted. Characterize the sets \(V\subset\mathbb S^1\) for which
\[
 \sup_{\substack{V_N\subset V\\ \#V_N\leq N}}
 \left\|H_{V_N}\right\|_{L^2(\R^2)}=o\left(\log N\right),
 \qquad N\to\infty.
\]
Unlike the maximal average, \(H_V\) is unbounded for \emph{every} infinite \(V\) \cites{Kara,LMP}, so no dichotomy of lacunary type can hold here and the question is genuinely quantitative.

\item\label{op:polygon} Determine the sharp growth in \(N\) of the strong or restricted type norm of the Fourier restriction to a regular \(N\)-gon on \(L^4(\R^2)\). The best known upper bound may be found in \cite{ADPHPR}. No matching lower bound is known.

\item\label{op:maxpolygon} Pass from a single dilate to the supremum over dilates. With \(T_{\mathcal P,\lambda}\) as in \eqref{eq:polygon-dilate}, prove that the maximal polygon multiplier
\[
 T_{N,\star}f\coloneqq \sup_{\lambda>0}\left|T_{\mathcal P_N,\lambda}f\right|
\]
satisfies \(L^p(\R^2)\) bounds depending logarithmically on \(N\), for \(\tfrac43\leq p\leq4\); the estimates presently available are of power type. Once the Besicovitch obstruction is removed altogether, even a bound independent of the number of sides is missing: prove that if \(\mathcal P\) is a convex polygon whose slopes form a lacunary set, then \(\sup_{\lambda>0}\left|T_{\mathcal P,\lambda}f\right|\) is bounded on \(L^p(\R^2)\) for \(1<p<\infty\), with a bound independent of the number of sides of \(\mathcal P\). This last question separates the two difficulties this survey has been concerned with. 
\end{enumerate}

%%%%%%%%%%%%%%%%%%%%%%%%%%%%%% SUBSECTION SUBSECTION SUBSECTION
\subsection{Vector fields} Here the obstruction is not counting but the absence of any finite parameter measuring the complexity of the direction field.

\begin{enumerate}[label=\textup{(\roman*)},resume*=openproblems,leftmargin=*,labelsep=0.4em]

\item\label{op:zygmund} Zygmund's conjecture. Characterize the Lipschitz vector fields \(v:\R^2\to\mathbb S^1\) along which \(L^2(\R^2)\) functions are differentiated, and Stein's singular integral counterpart, the weak \((2,2)\) bound for the truncated directional Hilbert transform along a Lipschitz field. Both remain open beyond the band-limited setting of \S\ref{sec:vf}, and beyond the one-variable case of \cite{BaTh}. 

\item\label{op:lipkakeya} Prove the conjecture of \cite{LaceyLiMem} on the Lipschitz Kakeya maximal function \eqref{eq:lipschitz-kakeya}: that \(\mathrm M_{v,\delta}\) maps \(L^p\) to \(L^{p,\infty}\) with norm \(\lesssim\delta^{-N}\) for some \(p<2\) and some finite \(N\), uniformly over Lipschitz fields \(v\). By \S\ref{sec:vf} this would give the single annulus \(L^2\) bound for every Lipschitz field, and only the integrability exponent in \eqref{eq:LK-weak2} stands between the two statements. Together with \ref{op:vv} it is one of two independent routes to \ref{op:zygmund}.

\item\label{op:vv} Prove vector-valued extensions of the Lacey--Li single-annulus theorem \cite{LaceyLi}, for instance weak-type $(2,2)$ or strong-type $(p,p)$ bounds for
\begin{equation}\label{eq:vv-square}
 f\mapsto\left(\sum_k\left|H_{v(\cdot)}P_kf\right|^2\right)^{\frac12},
\end{equation}
where \(\{P_k\}\) is a Littlewood--Paley decomposition of \(\R^2\) and \(v\) is a measurable vector field. By \S\ref{sec:vf} this implies Stein's conjecture for Lipschitz fields.

\end{enumerate}

%%%%%%%%%%%%%%%%%%%%%%%%%%%%%% SUBSECTION SUBSECTION SUBSECTION
\subsection{Higher dimensions and the Grassmannian}

\begin{enumerate}[label=\textup{(\roman*)},resume*=openproblems,leftmargin=*,labelsep=0.4em]

\item\label{op:nikodym} For arbitrary finite \(V\subset\mathbb S^{n-1}\), \(n\geq3\), interpolating Theorem~\ref{thm:varieties} at \(m=n-1\) with the trivial \(L^\infty\) bound gives \(\|\M_{V,1}\|_{L^p(\R^n)}\lesssim_\eps(\#V)^{\frac{n-2}{p(n-1)}+\eps}\) for \(p\geq2\), against the lower bound \((\#V)^{\frac{n-p}{p(n-1)}}\) of \eqref{eq:radial-subcritical}. Improve this exponent for some \(2<p<n\); any \(p>2\) would be nontrivial. For the tube maximal function \(\mathcal N_\delta\), Wolff \cite{Wolff95} obtains the corresponding sharp exponent at \(p=\frac{n+2}{2}\), up to \(\eps\)-loss, but the comparison \eqref{eq:nikodym-comparison} runs only from \(\M_{V,1}\) to \(\mathcal N_\delta\). Closing the gap at \(p=n\) is \eqref{eq:nikodym-max}, which contains the maximal Nikodym and maximal Kakeya conjectures.

\item\label{op:conjG} Establish Conjecture \ref{conj:nikodym} for \(1<d<n-1\) outside \(L^2\). The case \(d=n-1\), where the critical exponent is \(p_\circ=2\), is known; for intermediate \(d\) the critical space \(L^{n-d+1}(\R^n)\) is out of reach, and even the subcritical range is only understood at \(p=2\).
\end{enumerate}

%%%%%%%%%%%%%%%%%%%%%%%%%%%%%% SUBSECTION SUBSECTION SUBSECTION
\subsection{Algebraic structure} The last group concerns the reach of the polynomial method in this setting, and is where we expect the next progress to come from.

\begin{enumerate}[label=\textup{(\roman*)},resume*=openproblems,leftmargin=1.5em,labelsep=0.4em]

\item\label{op:maximal-alg} Develop sharp almost-orthogonality estimates for maximal averages, and not only for band multipliers, along algebraic varieties in higher dimensions. For band multipliers a square function estimate of C\'ordoba type is available and, coupled with Theorem \ref{thm:AO-algebraic}, yields sharp bounds; for maximal averages the supremum over scales obstructs that estimate, and no substitute is currently known.

\item\label{op:descent} Find a form of Theorem \ref{thm:AO-algebraic} in which \(\mathbb S^{n-1}\) is replaced by a variety of arbitrary dimension. The proof of Lemma \ref{lem:cell-meets} normalizes the projection of \(v\) onto \(\xi^\perp\), returning a point of \(\mathbb S^{n-1}\) rather than of the carrying variety; the component-counting argument on \(H_\xi\setminus Z(p)\) is then lost, and the principle descends one step only. Such a statement would replace the induction on dimension performed by hand in \cite{DPPAJM}. The same question on \(\Gr(d,n)\), \(d>1\), would govern subspaces constrained to a subvariety of the Grassmannian, already in the first new case \(\Gr(2,4)\).

\item\label{op:curves-n} Find the sharp multiscale bound for maximal averages along directions lying on an algebraic curve, first in \(\mathbb S^2\) and then in \(\mathbb S^{n-1}\), and extend it to varieties of intermediate dimension. 

\end{enumerate}

\bibliographystyle{plain}

\begin{bibdiv}
\begin{biblist}

\bib{ADPHPR}{article}{
      author={Accomazzo, Natalia},
      author={Di~Plinio, Francesco},
      author={Hagelstein, Paul},
      author={Parissis, Ioannis},
      author={Roncal, Luz},
       title={Directional square functions},
        date={2023},
        ISSN={2157-5045,1948-206X},
     journal={Anal. PDE},
      volume={16},
      number={7},
       pages={1651\ndash 1699},
         url={https://doi.org/10.2140/apde.2023.16.1651},
      review={\MR{4644118}},
}

\bib{ADPP}{article}{
      author={Accomazzo, Natalia},
      author={Di~Plinio, Francesco},
      author={Parissis, Ioannis},
       title={Singular integrals along lacunary directions in {$\mathbb
  {R}^n$}},
        date={2021},
        ISSN={0001-8708,1090-2082},
     journal={Adv. Math.},
      volume={380},
       pages={Paper No. 107580, 21},
         url={https://doi.org/10.1016/j.aim.2021.107580},
      review={\MR{4200473}},
}

\bib{Alf_AO}{article}{
      author={Alfonseca, Angeles},
       title={Strong type inequalities and an almost-orthogonality principle
  for families of maximal operators along directions in {$\mathbb R^2$}},
        date={2003},
        ISSN={0024-6107,1469-7750},
     journal={J. London Math. Soc. (2)},
      volume={67},
      number={1},
       pages={208\ndash 218},
         url={https://doi.org/10.1112/S0024610702003915},
      review={\MR{1942421}},
}

\bib{ASVmounthol}{incollection}{
      author={Alfonseca, Angeles},
      author={Soria, Fernando},
      author={Vargas, Ana},
       title={An almost-orthogonality principle in {$L^2$} for directional
  maximal functions},
        date={2003},
   booktitle={Harmonic analysis at {M}ount {H}olyoke ({S}outh {H}adley, {MA},
  2001)},
      series={Contemp. Math.},
      volume={320},
   publisher={Amer. Math. Soc., Providence, RI},
       pages={1\ndash 7},
         url={https://doi.org/10.1090/conm/320/05594},
      review={\MR{1979927}},
}

\bib{ASV}{article}{
      author={Alfonseca, Angeles},
      author={Soria, Fernando},
      author={Vargas, Ana},
       title={A remark on maximal operators along directions in {$\mathbb
  R^2$}},
        date={2003},
        ISSN={1073-2780},
     journal={Math. Res. Lett.},
      volume={10},
      number={1},
       pages={41\ndash 49},
         url={https://doi.org/10.4310/MRL.2003.v10.n1.a5},
      review={\MR{1960122}},
}

\bib{BDPPR}{article}{
      author={Bakas, Odysseas},
      author={Di~Plinio, Francesco},
      author={Parissis, Ioannis},
      author={Roncal, Luz},
       title={Singular integrals along variable codimension one subspaces},
        date={2024},
        ISSN={2769-8505},
     journal={Ars Inven. Anal.},
       pages={Paper No. 2, 53},
         url={https://doi.org/10.15781/md28-ws10},
      review={\MR{4723191}},
}

\bib{BaroneBasu}{article}{
      author={Barone, Sal},
      author={Basu, Saugata},
       title={Refined bounds on the number of connected components of sign
  conditions on a variety},
        date={2012},
        ISSN={0179-5376},
     journal={Discrete Comput. Geom.},
      volume={47},
      number={3},
       pages={577\ndash 597},
      review={\MR{2891249}},
}

\bib{BaSo}{article}{
      author={Basu, Saugata},
      author={Sombra, Mart{\'i}n},
       title={Polynomial partitioning on varieties of codimension two and
  point-hypersurface incidences in four dimensions},
        date={2016},
        ISSN={0179-5376},
     journal={Discrete Comput. Geom.},
      volume={55},
      number={1},
       pages={158\ndash 184},
         url={https://doi.org/10.1007/s00454-015-9736-4},
      review={\MR{3439263}},
}

\bib{Batduke}{article}{
      author={Bateman, Michael},
       title={Kakeya sets and directional maximal operators in the plane},
        date={2009},
        ISSN={0012-7094,1547-7398},
     journal={Duke Math. J.},
      volume={147},
      number={1},
       pages={55\ndash 77},
         url={https://doi.org/10.1215/00127094-2009-006},
      review={\MR{2494456}},
}

\bib{BatRMI}{article}{
      author={Bateman, Michael},
       title={Single annulus {$L^p$} estimates for {H}ilbert transforms along
  vector fields},
        date={2013},
        ISSN={0213-2230},
     journal={Rev. Mat. Iberoam.},
      volume={29},
      number={3},
       pages={1021\ndash 1069},
         url={https://doi.org/10.4171/RMI/748},
      review={\MR{3090145}},
}

\bib{BKcantor}{article}{
      author={Bateman, Michael},
      author={Katz, Nets~Hawk},
       title={Kakeya sets in {C}antor directions},
        date={2008},
     journal={Math. Res. Lett.},
      volume={15},
      number={1},
       pages={73\ndash 81},
      review={\MR{2367175}},
}

\bib{BaTh}{article}{
      author={Bateman, Michael},
      author={Thiele, Christoph},
       title={{$L^p$} estimates for the {H}ilbert transforms along a
  one-variable vector field},
        date={2013},
        ISSN={2157-5045},
     journal={Anal. PDE},
      volume={6},
      number={7},
       pages={1577\ndash 1600},
         url={https://doi.org/10.2140/apde.2013.6.1577},
      review={\MR{3148061}},
}

\bib{Besi}{article}{
      author={Besicovitch, A.~S.},
       title={On {K}akeya's problem and a similar one},
        date={1928},
        ISSN={0025-5874,1432-1823},
     journal={Math. Z.},
      volume={27},
      number={1},
       pages={312\ndash 320},
         url={https://doi.org/10.1007/BF01171101},
      review={\MR{1544912}},
}

\bib{PeBi}{book}{
      author={Bishop, Christopher~J.},
      author={Peres, Yuval},
       title={Fractals in probability and analysis},
      series={Cambridge Studies in Advanced Mathematics},
   publisher={Cambridge University Press, Cambridge},
        date={2017},
      volume={162},
        ISBN={978-1-107-13411-9},
         url={https://doi.org/10.1017/9781316460238},
      review={\MR{3616046}},
}

\bib{Bourgain89}{incollection}{
      author={Bourgain, J.},
       title={A remark on the maximal function associated to an analytic vector
  field},
        date={1989},
   booktitle={Analysis at {U}rbana, {V}ol. {I} ({U}rbana, {IL}, 1986--1987)},
      series={London Math. Soc. Lecture Note Ser.},
      volume={137},
   publisher={Cambridge Univ. Press, Cambridge},
       pages={111\ndash 132},
      review={\MR{1009171}},
}

\bib{Bourgain91}{article}{
      author={Bourgain, J.},
       title={Besicovitch type maximal operators and applications to {F}ourier
  analysis},
        date={1991},
     journal={Geom. Funct. Anal.},
      volume={1},
      number={2},
       pages={147\ndash 187},
      review={\MR{1097257}},
}

\bib{Car}{article}{
      author={Carbery, Anthony},
       title={Covering lemmas revisited},
        date={1988},
        ISSN={0013-0915,1464-3839},
     journal={Proc. Edinburgh Math. Soc. (2)},
      volume={31},
      number={1},
       pages={145\ndash 150},
         url={https://doi.org/10.1017/S0013091500006647},
      review={\MR{930022}},
}

\bib{CaHeSo}{article}{
      author={Carbery, Anthony},
      author={Hern\'andez, Eugenio},
      author={Soria, Fernando},
       title={The behaviour on radial functions of maximal operators along
  arbitrary directions and the {K}akeya maximal operator},
        date={1989},
        ISSN={0040-8735,2186-585X},
     journal={Tohoku Math. J. (2)},
      volume={41},
      number={4},
       pages={647\ndash 656},
         url={https://doi.org/10.2748/tmj/1178227735},
      review={\MR{1025330}},
}

\bib{CaHeSoer}{article}{
      author={Carbery, Anthony},
      author={Hern\'andez, Eugenio},
      author={Soria, Fernando},
       title={Erratum: ``{T}he behaviour on radial functions of maximal
  operators along arbitrary directions and the {K}akeya maximal operator''},
        date={1990},
        ISSN={0040-8735,2186-585X},
     journal={Tohoku Math. J. (2)},
      volume={42},
      number={2},
       pages={287},
         url={https://doi.org/10.2748/tmj/1178227658},
      review={\MR{1053953}},
}

\bib{CWW}{article}{
      author={Chang, S.-Y.\~A.},
      author={Wilson, J.~M.},
      author={Wolff, T.~H.},
       title={Some weighted norm inequalities concerning the {S}chr\"odinger
  operators},
        date={1985},
        ISSN={0010-2571,1420-8946},
     journal={Comment. Math. Helv.},
      volume={60},
      number={2},
       pages={217\ndash 246},
         url={https://doi.org/10.1007/BF02567411},
      review={\MR{800004}},
}

\bib{CDRdF}{article}{
      author={Christ, Michael},
      author={Duoandikoetxea, Javier},
      author={Rubio~de Francia, Jos\'e~L.},
       title={Maximal operators related to the {R}adon transform and the
  {C}alder\'on-{Z}ygmund method of rotations},
        date={1986},
     journal={Duke Math. J.},
      volume={53},
      number={1},
       pages={189\ndash 209},
      review={\MR{835805}},
}

\bib{CGHS}{article}{
      author={Christ, Michael},
      author={Grafakos, Loukas},
      author={Honz\'ik, Petr},
      author={Seeger, Andreas},
       title={Maximal functions associated with {F}ourier multipliers of
  {M}ikhlin-{H}\"ormander type},
        date={2005},
        ISSN={0025-5874,1432-1823},
     journal={Math. Z.},
      volume={249},
      number={1},
       pages={223\ndash 240},
         url={https://doi.org/10.1007/s00209-004-0698-0},
      review={\MR{2106977}},
}

\bib{CNSW}{article}{
      author={Christ, Michael},
      author={Nagel, Alexander},
      author={Stein, Elias~M.},
      author={Wainger, Stephen},
       title={Singular and maximal {R}adon transforms: analysis and geometry},
        date={1999},
     journal={Ann. of Math. (2)},
      volume={150},
      number={2},
       pages={489\ndash 577},
      review={\MR{1726701}},
}

\bib{CordPoly}{article}{
      author={C\'ordoba, A.},
       title={The multiplier problem for the polygon},
        date={1977},
     journal={Ann. of Math. (2)},
      volume={105},
      number={3},
       pages={581\ndash 588},
      review={\MR{0438022}},
}

\bib{CoFeflac}{article}{
      author={C\'ordoba, A.},
      author={Fefferman, R.},
       title={On differentiation of integrals},
        date={1977},
        ISSN={0027-8424},
     journal={Proc. Nat. Acad. Sci. U.S.A.},
      volume={74},
      number={6},
       pages={2211\ndash 2213},
         url={https://doi.org/10.1073/pnas.74.6.2211},
      review={\MR{476977}},
}

\bib{CoFemult}{article}{
      author={C\'ordoba, A.},
      author={Fefferman, R.},
       title={On the equivalence between the boundedness of certain classes of
  maximal and multiplier operators in {F}ourier analysis},
        date={1977},
        ISSN={0027-8424},
     journal={Proc. Nat. Acad. Sci. U.S.A.},
      volume={74},
      number={2},
       pages={423\ndash 425},
      review={\MR{0433117}},
}

\bib{CordAJM}{article}{
      author={Cordoba, Antonio},
       title={The {K}akeya maximal function and the spherical summation
  multipliers},
        date={1977},
        ISSN={0002-9327,1080-6377},
     journal={Amer. J. Math.},
      volume={99},
      number={1},
       pages={1\ndash 22},
         url={https://doi.org/10.2307/2374006},
      review={\MR{447949}},
}

\bib{CorGFA}{article}{
      author={C\'ordoba, Antonio},
       title={Geometric {F}ourier analysis},
        date={1982},
        ISSN={0373-0956},
     journal={Ann. Inst. Fourier (Grenoble)},
      volume={32},
      number={3},
       pages={vii, 215\ndash 226},
         url={http://www.numdam.org/item?id=AIF_1982__32_3_215_0},
      review={\MR{688027}},
}

\bib{ciprianpams}{article}{
      author={Demeter, Ciprian},
       title={Singular integrals along {$N$} directions in {$\mathbb R^2$}},
        date={2010},
        ISSN={0002-9939,1088-6826},
     journal={Proc. Amer. Math. Soc.},
      volume={138},
      number={12},
       pages={4433\ndash 4442},
         url={https://doi.org/10.1090/S0002-9939-2010-10442-2},
      review={\MR{2680067}},
}

\bib{cipriangreedy}{article}{
      author={Demeter, Ciprian},
       title={{$L^2$} bounds for a {K}akeya-type maximal operator in {$\mathbb
  R^3$}},
        date={2012},
        ISSN={0024-6093,1469-2120},
     journal={Bull. Lond. Math. Soc.},
      volume={44},
      number={4},
       pages={716\ndash 728},
         url={https://doi.org/10.1112/blms/bds004},
      review={\MR{2967239}},
}

\bib{DDP}{article}{
      author={Demeter, Ciprian},
      author={Di~Plinio, Francesco},
       title={Logarithmic {$L^p$} bounds for maximal directional singular
  integrals in the plane},
        date={2014},
        ISSN={1050-6926,1559-002X},
     journal={J. Geom. Anal.},
      volume={24},
      number={1},
       pages={375\ndash 416},
         url={https://doi.org/10.1007/s12220-012-9340-2},
      review={\MR{3145928}},
}

\bib{DPGTZK}{article}{
      author={Di~Plinio, Francesco},
      author={Guo, Shaoming},
      author={Thiele, Christoph},
      author={Zorin-Kranich, Pavel},
       title={Square functions for bi-{L}ipschitz maps and directional
  operators},
        date={2018},
        ISSN={0022-1236},
     journal={J. Funct. Anal.},
      volume={275},
      number={8},
       pages={2015\ndash 2058},
         url={https://doi.org/10.1016/j.jfa.2018.07.005},
      review={\MR{3841536}},
}

\bib{DPPIJM}{article}{
      author={Di~Plinio, Francesco},
      author={Parissis, Ioannis},
       title={A sharp estimate for the {H}ilbert transform along finite order
  lacunary sets of directions},
        date={2018},
        ISSN={0021-2172,1565-8511},
     journal={Israel J. Math.},
      volume={227},
      number={1},
       pages={189\ndash 214},
         url={https://doi.org/10.1007/s11856-018-1724-y},
      review={\MR{3846321}},
}

\bib{DPPIMRN}{article}{
      author={Di~Plinio, Francesco},
      author={Parissis, Ioannis},
       title={On the maximal directional {H}ilbert transform in three
  dimensions},
        date={2020},
        ISSN={1073-7928,1687-0247},
     journal={Int. Math. Res. Not. IMRN},
      number={14},
       pages={4324\ndash 4356},
         url={https://doi.org/10.1093/imrn/rny138},
      review={\MR{4126303}},
}

\bib{DPPAJM}{article}{
      author={Di~Plinio, Francesco},
      author={Parissis, Ioannis},
       title={Maximal directional operators along algebraic varieties},
        date={2021},
        ISSN={0002-9327,1080-6377},
     journal={Amer. J. Math.},
      volume={143},
      number={5},
       pages={1463\ndash 1503},
         url={https://doi.org/10.1353/ajm.2021.0037},
      review={\MR{4334401}},
}

\bib{DPPsubspace}{article}{
      author={Di~Plinio, Francesco},
      author={Parissis, Ioannis},
       title={Maximal subspace averages},
        date={2022},
        ISSN={0001-8708,1090-2082},
     journal={Adv. Math.},
      volume={410},
       pages={Paper No. 108749, 51},
         url={https://doi.org/10.1016/j.aim.2022.108749},
      review={\MR{4498204}},
}

\bib{DuoVargas}{article}{
      author={Duoandikoetxea, Javier},
      author={Vargas, Ana},
       title={Directional operators and radial functions on the plane},
        date={1995},
        ISSN={0004-2080,1871-2487},
     journal={Ark. Mat.},
      volume={33},
      number={2},
       pages={281\ndash 291},
         url={https://doi.org/10.1007/BF02559710},
      review={\MR{1373025}},
}

\bib{Falconer80}{article}{
      author={Falconer, K.~J.},
       title={Continuity properties of {$k$}-plane integrals and {B}esicovitch
  sets},
        date={1980},
     journal={Math. Proc. Cambridge Philos. Soc.},
      volume={87},
      number={2},
       pages={221\ndash 226},
      review={\MR{553579}},
}

\bib{Falconer86}{article}{
      author={Falconer, K.~J.},
       title={Sets with prescribed projections and {N}ikod\'ym sets},
        date={1986},
     journal={Proc. London Math. Soc. (3)},
      volume={53},
      number={1},
       pages={48\ndash 64},
      review={\MR{842156}},
}

\bib{FeffBall}{article}{
      author={Fefferman, Charles},
       title={The multiplier problem for the ball},
        date={1971},
     journal={Ann. of Math. (2)},
      volume={94},
       pages={330\ndash 336},
      review={\MR{0296602}},
}

\bib{Florez}{unpublished}{
      author={Fl\'orez-Amatriain, Mikel},
       title={{$L^p$}-estimates for singular integral operators along
  codimension one subspaces},
        date={2025},
        note={Preprint, \eprint{2502.13079}},
}

\bib{GanWu}{article}{
      author={Gan, Shengwen},
      author={Wu, Shukun},
       title={Square function estimates for conical regions},
     journal={Indiana Univ. Math. J.},
        note={\eprint{2203.12155}},
}

\bib{Graf}{book}{
      author={Grafakos, Loukas},
       title={Classical {F}ourier analysis},
     edition={Third},
      series={Graduate Texts in Mathematics},
   publisher={Springer, New York},
        date={2014},
      volume={249},
        ISBN={978-1-4939-1193-6; 978-1-4939-1194-3},
         url={https://doi.org/10.1007/978-1-4939-1194-3},
      review={\MR{3243734}},
}

\bib{GHS}{article}{
      author={Grafakos, Loukas},
      author={Honz\'ik, Petr},
      author={Seeger, Andreas},
       title={On maximal functions for {M}ikhlin-{H}\"ormander multipliers},
        date={2006},
        ISSN={0001-8708,1090-2082},
     journal={Adv. Math.},
      volume={204},
      number={2},
       pages={363\ndash 378},
         url={https://doi.org/10.1016/j.aim.2005.05.010},
      review={\MR{2249617}},
}

\bib{GuoAPDE}{article}{
      author={Guo, Shaoming},
       title={Hilbert transform along measurable vector fields constant on
  {L}ipschitz curves: {$L^2$} boundedness},
        date={2015},
     journal={Anal. PDE},
      volume={8},
      number={5},
       pages={1263\ndash 1288},
      review={\MR{3393679}},
}

\bib{GuoJGA}{article}{
      author={Guo, Shaoming},
       title={A geometric proof of {B}ourgain's {$L^2$} estimate of the maximal
  operator along analytic vector fields},
        date={2017},
        ISSN={1050-6926},
     journal={J. Geom. Anal.},
      volume={27},
      number={4},
       pages={2977\ndash 3012},
}

\bib{G1}{article}{
      author={Guth, Larry},
       title={A restriction estimate using polynomial partitioning},
        date={2016},
        ISSN={0894-0347},
     journal={J. Amer. Math. Soc.},
      volume={29},
      number={2},
       pages={371\ndash 413},
         url={https://doi.org/10.1090/jams827},
      review={\MR{3454378}},
}

\bib{Guth2}{article}{
      author={Guth, Larry},
       title={Restriction estimates using polynomial partitioning {II}},
        date={201603},
      eprint={1603.04250},
         url={https://arxiv.org/abs/1603.04250},
}

\bib{GK}{article}{
      author={Guth, Larry},
      author={Katz, Nets~Hawk},
       title={On the {E}rd{\"o}s distinct distances problem in the plane},
        date={2015},
        ISSN={0003-486X},
     journal={Ann. of Math. (2)},
      volume={181},
      number={1},
       pages={155\ndash 190},
         url={https://doi.org/10.4007/annals.2015.181.1.2},
      review={\MR{3272924}},
}

\bib{HRMSduke}{article}{
      author={Hagelstein, Paul},
      author={Radillo-Murguia, Blanca},
      author={Stokolos, Alexander},
       title={Probabilistic construction of {K}akeya-type sets in {$\mathbb
  {R}^{2}$} associated to separated sets of directions},
        date={2026},
        ISSN={0012-7094,1547-7398},
     journal={Duke Math. J.},
      volume={175},
      number={2},
       pages={185\ndash 198},
         url={https://doi.org/10.1215/00127094-2025-0025},
      review={\MR{5030464}},
}

\bib{HRMS}{article}{
      author={Hagelstein, Paul},
      author={Radillo-Murguia, Blanca},
      author={Stokolos, Alexander},
       title={Remarks on the construction of {$K_\sigma$} sets associated to
  trees not satisfying a separation condition},
        date={2026},
        ISSN={0213-2230,2235-0616},
     journal={Rev. Mat. Iberoam.},
      volume={42},
      number={2},
       pages={545\ndash 550},
         url={https://doi.org/10.4171/rmi/1581},
      review={\MR{5035525}},
}

\bib{Kara}{article}{
      author={Karagulyan, G.~A.},
       title={On unboundedness of maximal operators for directional {H}ilbert
  transforms},
        date={2007},
        ISSN={0002-9939,1088-6826},
     journal={Proc. Amer. Math. Soc.},
      volume={135},
      number={10},
       pages={3133\ndash 3141},
         url={https://doi.org/10.1090/S0002-9939-07-08731-X},
      review={\MR{2322743}},
}

\bib{Katzcantor}{article}{
      author={Katz, Nets~Hawk},
       title={A counterexample for maximal operators over a {C}antor set of
  directions},
        date={1996},
        ISSN={1073-2780},
     journal={Math. Res. Lett.},
      volume={3},
      number={4},
       pages={527\ndash 536},
         url={https://doi.org/10.4310/MRL.1996.v3.n4.a10},
      review={\MR{1406017}},
}

\bib{KatzDuke}{article}{
      author={Katz, Nets~Hawk},
       title={Maximal operators over arbitrary sets of directions},
        date={1999},
        ISSN={0012-7094},
     journal={Duke Math. J.},
      volume={97},
      number={1},
       pages={67\ndash 79},
         url={https://doi.org/10.1215/S0012-7094-99-09702-8},
      review={\MR{1681088}},
}

\bib{Katzsingle}{article}{
      author={Katz, Nets~Hawk},
       title={Remarks on maximal operators over arbitrary sets of directions},
        date={1999},
        ISSN={0024-6093,1469-2120},
     journal={Bull. London Math. Soc.},
      volume={31},
      number={6},
       pages={700\ndash 710},
         url={https://doi.org/10.1112/S0024609399005949},
      review={\MR{1711029}},
}

\bib{KimPramanik}{article}{
      author={Kim, Jongchon},
      author={Pramanik, Malabika},
       title={{$L^2$} bounds for a maximal directional {H}ilbert transform},
        date={2022},
        ISSN={2157-5045,1948-206X},
     journal={Anal. PDE},
      volume={15},
      number={3},
       pages={753\ndash 794},
         url={https://doi.org/10.2140/apde.2022.15.753},
      review={\MR{4442840}},
}

\bib{JKim}{article}{
      author={Kim, Joonil},
       title={Maximal average along variable lines},
        date={2008},
     journal={Trans. Amer. Math. Soc.},
      volume={360},
      number={10},
       pages={5417\ndash 5436},
      review={\MR{2415079}},
}

\bib{KLP}{unpublished}{
      author={Kroc, Edward},
      author={Lee, Juyoung},
      author={Pramanik, Malabika},
       title={Directional maximal operators in the plane},
        date={2026},
        note={Preprint, \eprint{2608.23871}},
}

\bib{LMP}{article}{
      author={{\L}aba, I.},
      author={Marinelli, A.},
      author={Pramanik, M.},
       title={On the maximal directional {H}ilbert transform},
        date={2019},
        ISSN={0133-3852,1588-273X},
     journal={Anal. Math.},
      volume={45},
      number={3},
       pages={535\ndash 568},
         url={https://doi.org/10.1007/s10476-019-0821-4},
      review={\MR{3995379}},
}

\bib{LaceyLiMem}{article}{
      author={Lacey, Michael},
      author={Li, Xiaochun},
       title={On a conjecture of {E}. {M}. {S}tein on the {H}ilbert transform
  on vector fields},
        date={2010},
     journal={Mem. Amer. Math. Soc.},
      volume={205},
      number={965},
       pages={viii+72},
      review={\MR{2654385}},
}

\bib{LaceyLi}{article}{
      author={Lacey, Michael~T.},
      author={Li, Xiaochun},
       title={Maximal theorems for the directional {H}ilbert transform on the
  plane},
        date={2006},
        ISSN={0002-9947,1088-6850},
     journal={Trans. Amer. Math. Soc.},
      volume={358},
      number={9},
       pages={4099\ndash 4117},
         url={https://doi.org/10.1090/S0002-9947-06-03869-4},
      review={\MR{2219012}},
}

\bib{MP}{article}{
      author={Matou{\v s}ek, Ji\v~r\'\i},
      author={Pat\'akov\'a, Zuzana},
       title={Multilevel polynomial partitions and simplified range searching},
        date={2015},
        ISSN={0179-5376},
     journal={Discrete Comput. Geom.},
      volume={54},
      number={1},
       pages={22\ndash 41},
         url={https://doi.org/10.1007/s00454-015-9701-2},
      review={\MR{3351757}},
}

\bib{Mattila}{book}{
      author={Mattila, Pertti},
       title={Fourier analysis and {H}ausdorff dimension},
      series={Cambridge Studies in Advanced Mathematics},
   publisher={Cambridge University Press, Cambridge},
        date={2015},
      volume={150},
      review={\MR{3617376}},
}

\bib{NSW}{article}{
      author={Nagel, A.},
      author={Stein, E.~M.},
      author={Wainger, S.},
       title={Differentiation in lacunary directions},
        date={1978},
        ISSN={0027-8424},
     journal={Proc. Nat. Acad. Sci. U.S.A.},
      volume={75},
      number={3},
       pages={1060\ndash 1062},
         url={https://doi.org/10.1073/pnas.75.3.1060},
      review={\MR{466470}},
}

\bib{Oberlin}{article}{
      author={Oberlin, Richard},
       title={Bounds for {K}akeya-type maximal operators associated with
  {$k$}-planes},
        date={2007},
        ISSN={1073-2780},
     journal={Math. Res. Lett.},
      volume={14},
      number={1},
       pages={87\ndash 97},
      review={\MR{2289622}},
}

\bib{OrpRoy}{unpublished}{
      author={Orponen, Tuomas},
      author={Roy, Hrit},
       title={Nikod\'ym maximal function with restricted directions},
        date={2026},
        note={Preprint, \eprint{2601.19631}},
}

\bib{PRproc}{article}{
      author={Parcet, Javier},
      author={Rogers, Keith~M.},
       title={Differentiation of integrals in higher dimensions},
        date={2013},
        ISSN={0027-8424,1091-6490},
     journal={Proc. Natl. Acad. Sci. USA},
      volume={110},
      number={13},
       pages={4941\ndash 4944},
         url={https://doi.org/10.1073/pnas.1218928110},
      review={\MR{3047650}},
}

\bib{PR}{article}{
      author={Parcet, Javier},
      author={Rogers, Keith~M.},
       title={Directional maximal operators and lacunarity in higher
  dimensions},
        date={2015},
        ISSN={0002-9327,1080-6377},
     journal={Amer. J. Math.},
      volume={137},
      number={6},
       pages={1535\ndash 1557},
      review={\MR{3432267}},
}

\bib{Schoen}{incollection}{
      author={Schoenberg, I.~J.},
       title={On the {B}esicovitch-{P}erron solution of the {K}akeya problem},
        date={1962},
   booktitle={Studies in mathematical analysis and related topics},
      series={Stanford Studies in Mathematics and Statistics},
      volume={IV},
   publisher={Stanford Univ. Press, Stanford, CA},
       pages={359\ndash 363},
      review={\MR{146698}},
}

\bib{sheffer}{book}{
      author={Sheffer, Adam},
       title={Polynomial methods and incidence theory},
      series={Cambridge Studies in Advanced Mathematics},
   publisher={Cambridge University Press, Cambridge},
        date={2022},
      volume={197},
        ISBN={978-1-108-83249-6},
         url={https://doi.org/10.1017/9781108959988},
      review={\MR{4394303}},
}

\bib{SjSj}{article}{
      author={Sj\"ogren, P.},
      author={Sj\"olin, P.},
       title={Littlewood-{P}aley decompositions and {F}ourier multipliers with
  singularities on certain sets},
        date={1981},
        ISSN={0373-0956,1777-5310},
     journal={Ann. Inst. Fourier (Grenoble)},
      volume={31},
      number={1},
       pages={vii, 157\ndash 175},
         url={https://doi.org/10.5802/aif.821},
      review={\MR{613033}},
}

\bib{SteinStreet}{article}{
      author={Stein, Elias~M.},
      author={Street, Brian},
       title={Multi-parameter singular {R}adon transforms {III}: real analytic
  surfaces},
        date={2012},
     journal={Adv. Math.},
      volume={229},
      number={4},
       pages={2210\ndash 2238},
      review={\MR{2880220}},
}

\bib{Stromblac}{article}{
      author={Str\"omberg, Jan-Olov},
       title={Weak estimates on maximal functions with rectangles in certain
  directions},
        date={1977},
        ISSN={0004-2080,1871-2487},
     journal={Ark. Mat.},
      volume={15},
      number={2},
       pages={229\ndash 240},
         url={https://doi.org/10.1007/BF02386043},
      review={\MR{487260}},
}

\bib{Strequi}{article}{
      author={Str\"omberg, Jan-Olov},
       title={Maximal functions associated to rectangles with uniformly
  distributed directions},
        date={1978},
        ISSN={0003-486X},
     journal={Ann. of Math. (2)},
      volume={107},
      number={2},
       pages={399\ndash 402},
         url={https://doi.org/10.2307/1971122},
      review={\MR{481883}},
}

\bib{TaoBRR}{article}{
      author={Tao, Terence},
       title={The {B}ochner-{R}iesz conjecture implies the restriction
  conjecture},
        date={1999},
     journal={Duke Math. J.},
      volume={96},
      number={2},
       pages={363\ndash 375},
      review={\MR{1666558}},
}

\bib{WangZahl}{unpublished}{
      author={Wang, Hong},
      author={Zahl, Joshua},
       title={Volume estimates for unions of convex sets, and the {K}akeya set
  conjecture in three dimensions},
        date={2025},
        note={Preprint, \eprint{2502.17655}},
}

\bib{Wolff95}{article}{
      author={Wolff, Thomas},
       title={An improved bound for {K}akeya type maximal functions},
        date={1995},
        ISSN={0213-2230},
     journal={Rev. Mat. Iberoamericana},
      volume={11},
      number={3},
       pages={651\ndash 674},
         url={https://doi.org/10.4171/RMI/188},
      review={\MR{1363209}},
}

\bib{Zahl}{article}{
      author={Zahl, Joshua},
       title={An improved bound on the number of point-surface incidences in
  three dimensions},
        date={2013},
        ISSN={1715-0868},
     journal={Contrib. Discrete Math.},
      volume={8},
      number={1},
       pages={100\ndash 121},
      review={\MR{3118901}},
}

\bib{Zahl2}{article}{
      author={Zahl, Joshua},
       title={A {S}zemer\'edi-{T}rotter type theorem in {$\mathbb{R}^4$}},
        date={2015},
        ISSN={0179-5376},
     journal={Discrete Comput. Geom.},
      volume={54},
      number={3},
       pages={513\ndash 572},
         url={https://doi.org/10.1007/s00454-015-9717-7},
      review={\MR{3392965}},
}

\end{biblist}
\end{bibdiv}
% \bib, bibdiv, biblist are defined by the amsrefs package.

\end{document}